\newtheorem{theorem}{Theorem}
\newtheorem{lemma}[theorem]{Lemma}
\newtheorem{corollary}[theorem]{Corollary}
\newtheorem{proposition}[theorem]{Proposition}
\newtheorem{algorithm}{Algorithm}
\numberwithin{theorem}{section}
\numberwithin{equation}{section}
\numberwithin{figure}{section}
\newcommand{\Ch}[2]{\ensuremath{\begin{pmatrix} #1 \\ #2 \end{pmatrix}}}
\newcommand{\ch}[2]{\ensuremath{\left( \begin{smallmatrix} #1 \\ #2
\end{smallmatrix} \right)}}
\newcommand{\PP}{\mathbb{P}}
\newcommand{\ZZ}{\mathbb{Z}}
\newcommand{\NN}{\mathbb{N}}
\newcommand{\Z}{\mathbb{Z}}
\newcommand{\cS}{\mathcal{S}}
\newcommand{\cA}{\mathcal{A}}
\newcommand{\cH}{\mathcal{H}}
\newcommand{\cR}{\mathcal{R}}
\newcommand{\cU}{\mathcal{U}}
\newcommand{\bQ}{{\mathbf{Q}}}
\newcommand{\Bin}{\textrm{Bin}}
\newcommand{\tW}{{\tilde{W}}}
\newcommand{\tw}{{\tilde{w}}}
\newcommand{\tpi}{{\tilde{\pi}}}
\newcommand{\malpha}{{{\alpha}}}
\newcommand{\mepsilon}{{{\epsilon}}}
\newcommand{\mdelta}{{{\delta}}}
\newcommand{\mI}{{I}}
\newcommand{\tQ}{\widetilde{Q}}
\newcommand{\trace}{\operatorname{trace}}
\newcommand{\ind}{\boldsymbol{1}}
\begin{document}

\title{Stationary distributions of the multi-type
ASEP}

\author{
\textbf{James B.~Martin}
\\
\textit{University of Oxford}
\\
\texttt{martin@stats.ox.ac.uk}
}
\date{}
\maketitle

\begin{abstract}
We give a recursive construction 
of the stationary distribution of multi-type
asymmetric simple exclusion processes on a finite ring  
or on the infinite line $\ZZ$. The construction can 
be interpreted in terms of ``multi-line diagrams" or 
systems of queues in tandem. Let $q$ be the asymmetry parameter
of the system. The queueing construction generalises the one
previously known for the totally asymmetric ($q=0$) case, by introducing
queues in which each potential service is unused with probability $q^k$ when the queue-length is $k$. The analysis is based on the
matrix product representation of Prolhac, Evans and Mallick. 
Consequences of the construction include: a simple 
method for sampling exactly
from the stationary distribution for the system on a ring; results on common denominators of
the stationary probabilities, expressed as rational functions
of $q$ with non-negative integer coefficients;
and probabilistic descriptions of ``convoy formation" phenomena 
in large systems. 
\end{abstract}

\section{Introduction and main results}\label{sec:intromain}
The \textit{asymmetric simple exclusion process}
(ASEP) is one of the simplest examples of an interacting
particle system. In physcial terms 
it can be seen as a basic model of non-reversible flow. 
Despite its simplicity, it displays an extremely 
rich behaviour, and attracts intense study in physics,
probability, combinatorics, and beyond. 

In this article we consider ASEPs with multiple types 
of particle. We start by defining the model on the ring 
$\Z_L=\{0,1,2,\dots, L-1\}$ (with cyclic bound\-ary conditions).
The system is a continuous-time Markov chain with state-space $\{1,2,\dots, N,\infty\}^{\ZZ_L}$, for some $N\geq 1$. For a configuration $\eta=(\eta_i, i\in\ZZ_L)$, we say that $\eta_i$ is the type of the particle at site $i$. (We may sometimes refer to particles of type 
$\infty$ as \textit{holes} -- we could equally give them label
$N+1$ rather than $\infty$, but the different notation
is sometimes helpful.)

Fix some $q\geq0$.
The dynamics of the process are as follows. 
If
$\eta(i)>\eta(i+1)$ (where the addition $i+1$ is done mod $L$), then the values $\eta(i)$ and $\eta(i+1)$ 
are exchanged at rate $1$. If instead $\eta(i)<\eta(i+1)$, 
then the values $\eta(i)$ and $\eta(i+1)$ are exchanged
at rate $q$. That is, neighbouring particles rearrange
themselves into increasing order at rate $1$, and 
into decreasing order at rate $q$. 
We will assume $q\in[0,1)$ throughout. 
Such a process was perhaps first described by
Alcaraz and Rittenberg \cite{AlcarazRittenberg} 
as an example of a more general class of multi-type particle systems. 

The dynamics preserve the number of particles of each type. 
Restricted to a state space with a given number of particles 
$k_n$ of each type, for $n=1,\dots,N$ and $k_1+k_2+\dots+k_N\leq L$, the process is irreducible. Our particular focus is 
on the stationary distribution. 

Consider first the case $N=1$. This is the standard (one-type) ASEP
(for an extensive introduction, see Part III of the book of Liggett
\cite{Liggettbook2}). 
For the one-type ASEP on the ring, each site
of $\ZZ_L$ contains either a particle (type $1$) or a hole (type $\infty$). A particle exchanges places with a hole to its left at rate $1$, and with a hole to its right at rate $q$.  Fix the number of particles to be $k$. Then it's well known
(and straightforward to show) that the stationary distribution 
of the process is uniform on all $\ch{L}{k}$ configurations. 

The ASEP with types $\{1,\dots, N, \infty\}$ can be viewed as a coupling of $N$ one-type ASEPs. Specifically, for any $n=1,\dots, N$, we can consider a projection under which types $r\leq n$ are considered ``particles" and types $r>n$ are considered ``holes"; for each $n$, the image of the process
under this projection is a one-type ASEP. In particular, 
projecting the stationary distribution in this way gives
a uniform distribution on the corresponding state-space.

However, despite the fact that all these projections are uniform, it is far from the case that the full stationary distribution is uniform. See for example Figure 
\ref{fig:convoys} for samples from the stationary distribution 
of a system with $1000$ particles all of different types, on the ring with $1000$ sites, for different values of $q$. A striking feature of the configurations observed is the appearance of long strings of particles with similar labels
(``convoys"). This ``clustering" is more pronounced for small $q$, and more pronounced around the middle of the range of 
particle types. 

\begin{figure}
\begin{center}

\includegraphics[trim=0 2cm 0 2cm, clip, width=0.97\textwidth]{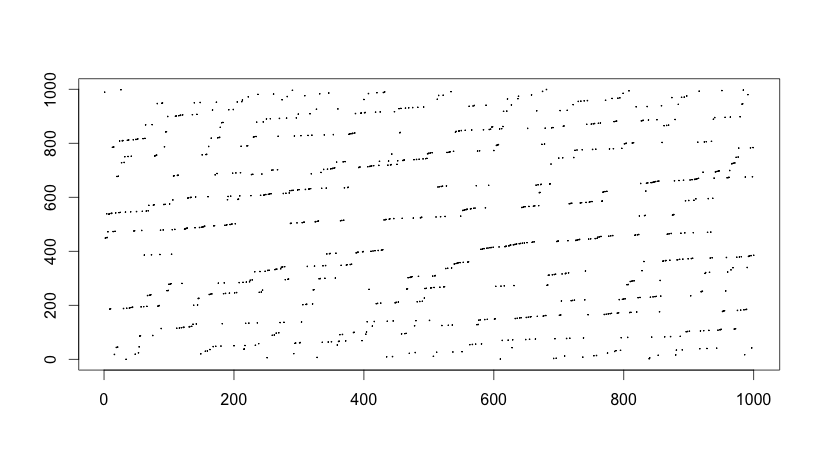}
\vspace{-0.1cm}

\includegraphics[trim=0 2cm 0 2cm, clip, width=0.97\textwidth]{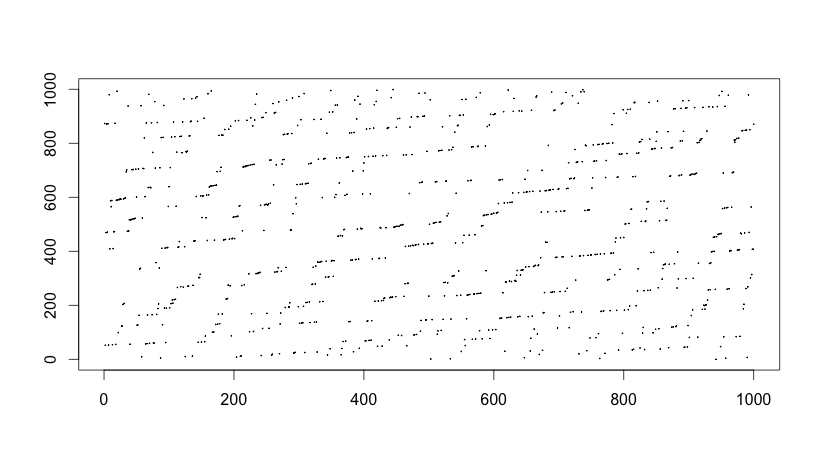}
\vspace{-0.1cm}

\includegraphics[trim=0 2cm 0 2cm, clip, width=0.97\textwidth]{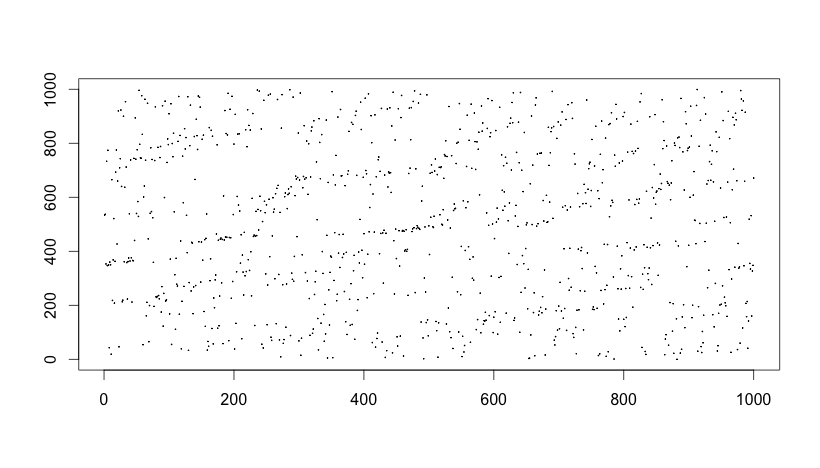}
\caption{\label{fig:convoys}
Samples of the stationary distribution 
of an ASEP on a ring of $1000$ sites, with $1000$ different
types of particle. 
The horizontal axis is labelled by site,
and the vertical axis by particle type.
Top: $q=0$; middle: $q=0.1$;
bottom: $q=0.8$. The ``convoys" are more pronounced
when $q$ is small (and the effect is stronger
around the middle of the range of particle classes). 
See the results in Section \ref{sec:convoys}.}
\end{center}
\end{figure}

Before stating the main results of the paper, we 
mention certain previous results --
see Section \ref{sec:related} for further background.
The stationary distribution of the 2-type ASEP 
(where the non-uniformity mentioned in the previous paragraph 
already appears) was 
studied by Derrida, Janowksy, Lebowitz and Speer \cite{DJLStasep}
(see also \cite{DEHP}) using the matrix product ansatz.
Further combinatorial descriptions in the case of the TASEP
(\textit{totally asymmetric}, i.e.\ $q=0$) were given by Ferrari, Fontes and Kohayakawa 
\cite{FFKtasep}, Angel \cite{Angeltasep}, and Duchi and Schaeffer
\cite{DucSch}. In \cite{FerMarmulti}, Ferrari and Martin introduced
a new recursive method which gives a construction of the stationary 
distribution of the $N$-type TASEP for any $N$. 

The construction by Ferrari and Martin is written in terms
of a system of queues in tandem (or so-called ``multiline diagrams"),
and the proof uses time-reversal arguments. 
A different approach, but exploiting a related recursive 
structure, was 
then found by Evans, Ferrari and Mallick \cite{EvansFerrariMallick}, to give a 
representation of the stationary distribution of the 
$N$-type TASEP using the matrix product ansatz 
(this was elaborated further for example by Arita, Ayyer, 
Mallick and Prolhac in \cite{AAMPtasep}). 

This matrix-product representation was then extended
to $q>0$ by 
Prolhac, Evans and Mallick in \cite{PEM}. They raised the question 
of whether the more proba\-bilistic/combinatorial 
construction of \cite{FerMarmulti} could also be generalised
to the ASEP. In this paper we show that indeed it can, 
giving a multi-line queue construction of the stationary
distribution of the ASEP. 

This construction has a number of nice consequences:
\begin{itemize}
\item
algorithmic: it gives an efficient method for sampling exactly from
systems even with a large number of types (see Algorithm 
\ref{multialgo} below, which was used to generate the samples in 
Figure \ref{fig:convoys} with $1000$ types of particle);
\item
algebraic: it leads directly to an expression 
for a common denominator of the stationary probabilities 
of an ASEP system, expressed as rational functions of $q$
(see Theorem \ref{thm:denominators});
\item
probabilistic: it can be exploited to give 
interesting qualitative and quantitative
information about the properties of the stationary distribution,
including the clustering properties described in 
Theorem \ref{thm:Wlimit}. 
\end{itemize}

A central object in the construction for the TASEP in \cite{FerMarmulti} is a simple model of a priority queue in discrete time, with a Markovian service process. When a service occurs in the
queue, the customer in the queue with highest priority departs
(unlesss the queue is empty, in which case the service is unused).
To generalise to $q>0$, we introduce a queue with ``rejected 
services". When a service occurs, it is offered to 
each customer in turn, in order of priority. Each customer
accepts the service with probability $1-q$; the first customer to 
accept it departs (unless they all reject it, in which case the service is unused). The multi-line process can be seen
as a collection of several such queues in series; such a model
is closely related to the so-called \textit{$q$-TASEP} (especially the discrete-time
version considered for example in \cite{BorodinCorwinqTASEP}).

It would be possible to give a proof of our main result
by an elaboration of the methods involving dynamic reversibility which 
were used in \cite{FerMarmulti} \cite{FerMarHAD}.
Instead, in this paper we rely 
directly on the matrix-product representation of Prolhac, Evans and Mallick, which we explain in Section \ref{sec:matrices}.
We make use of particular instances of matrices
which satisfy the quadratic algebra of \cite{PEM}; these can be related
to Markov transition matrices which govern the evolution 
of a queue. 

\subsection{Algorithms and multi-line diagrams}
\label{sec:main-algorithms}
We now begin to state the main results of the paper. 
We start with an algorithm
for sampling from the stationary distribution
of the $N$-type ASEP on $\ZZ_L$, with given
particle counts. 
Later in the paper we will be able to restate
it in terms of a function which assigns
weights to ``multi-line diagrams", such
that the probability distribution on the diagrams
proportional to these weights projects to the ASEP
stationary distribution.

We describe the algorithm using queueing language, 
although we postpone until later the introduction
of notation which describes queue-length processes more
formally (see Section \ref{sec:queueing}). The sites of $\ZZ_L$ are
treated as ``times" in the queueing process. Hence time is cyclic
(a setting in which time is more straightforwardly indexed by $\ZZ$ is 
introduced in Section \ref{sec:queueing} where we consider
ASEPs on $\ZZ$ rather than $\ZZ_L$). 

First consider the case $N=2$. Fix $L$, $k_1$, $k_2$ with
$k_1+k_2\leq L$. The following algorithm can be used to 
generate a sample from the stationary distribution of the $2$-type 
ASEP on $\ZZ_L$ with $k_1$ type-$1$ particles and $k_2$ type-$2$ particles. The algorithm does the following thing:
(i) it randomly chooses a set of $k_1$ arrival times, and a set of $k_1+k_2$ service times; (ii) it assigns to each arrival time
a different departure time chosen from among the service times. 
This gives an output of the queue, in which each time $i\in\ZZ_L$ is 
either a departure time, an unused service time, or a time which had no service. This is used to define a configuration in $\{1,2,\infty\}^{\ZZ_L}$.

\begin{algorithm}\label{2algo}
$\, $
\begin{itemize}
\item Choose a set of ``arrival times" uniformly among all subsets of $\ZZ_L$
of size $k_1$, and independently a set of ``service times" uniformly among all subsets of $\ZZ_L$ of size $k_1+k_2$. 
\item
Now process
the arrivals one by one, in an arbitrary order (for example left to right), and assign a departure time to each 
one from among the service times, in the following way.

Suppose we have already processed $r$ of the $k_1$ arrivals,
  where $0\leq r<k_1$.  This means we have already assigned $r$ of the
  $k_1+k_2$ service times.  Now look at an arrival we have not
  processed yet -- say it occurs at time $i$.  
We wish to assign a service time to be the departure time of the arrival at time $i$, 
which has not yet been assigned to any other arrival.  
  If there is a service
  at time $i$ which has not yet been assigned, then assign it to this
  arrival, and we are done. Otherwise, let the remaining $k_1+k_2-r$
  available potential service times be $i_1, i_2,\dots, i_{k_1+k_2-r}$; we list
  these sites in cyclic order around the ring starting from $i$, so
  that
\[
0<[(i_1-i)\bmod L] < [(i_2-i)\bmod L] < \dots < [(i_{k_1+k_2-r}-i)\bmod L].
\]
Now assign the arrival at $i$ to the service at $i_j$ 
with probability 
\begin{equation}\label{truncate}
q^{j-1}/(1+q+q^2+\dots+q^{k_1+k_2-r-1}).
\end{equation} 
\item
Having done this for all $k_1$ arrivals, we have chosen $k_1$ departure times, and the remaining
$k_2-k_1$ service times will be unused. Now define an output configuration $D\in\{1,2,\infty\}^{\ZZ_L}$ as follows: for each $i\in\ZZ_L$,
\begin{equation}\label{2statring}
D_i=\begin{cases}
1&\text{ if a departure occurs at } i\\
2&\text{ if an unused service occurs at } i\\
\infty&\text{ if no service was available at } i\\
\end{cases}.
\end{equation}

\end{itemize}
\end{algorithm}

\begin{theorem}\label{thm:2algo}
Algorithm \ref{2algo} generates a configuration $D_i, i\in\ZZ_L$ whose distribution is the stationary distribution of the ASEP
on $\ZZ_L$ with $k_1$ type-$1$ particles and
$k_2$ type-$2$ particles. 
\end{theorem}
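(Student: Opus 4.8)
The plan is to verify Theorem~\ref{thm:2algo} by showing that the random configuration $D$ produced by Algorithm~\ref{2algo} matches the stationary distribution of the $2$-type ASEP, which by \cite{DJLStasep} (or via the matrix-product representation discussed in Section~\ref{sec:matrices}) has a known closed form. Rather than attack the full multi-type case, I would exploit the special structure of $N=2$: the algorithm outputs a uniform set of $k_1$ arrivals, a uniform and independent set of $k_1+k_2$ services, and then a random matching of arrivals to services governed by the truncated geometric rule \reff{truncate}. The first reduction is to compute the probability that the algorithm outputs a \emph{specific} admissible configuration $D\in\{1,2,\infty\}^{\ZZ_L}$, and to check this is proportional (with a constant depending only on $L,k_1,k_2$) to the stationary weight of $D$.

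First I would fix a target configuration $D$ and identify which choices of (arrivals, services, matching) produce it. The positions of $\infty$ in $D$ are exactly the non-service sites, so the service set is forced to be $\{i:D_i\neq\infty\}$; the arrival set, however, is \emph{not} determined by $D$ alone, since an arrival may depart either at its own site or at a later service site. So I would sum over all arrival sets and matchings consistent with $D$. The key observation is that once the service set is fixed, assigning departure times reduces to a combinatorial problem on the cyclic sequence of service sites: reading $D$ around the ring, each ``$1$'' (departure) must be matched to an earlier-or-equal arrival, and the truncated-geometric weights in \reff{truncate} should telescope so that the total weight of all consistent matchings depends on $D$ only through a simple statistic (I expect a product of factors $q^{(\text{number of type-}2\text{ services overtaken})}$, matching the two-type ASEP weight, which counts inversions between the two species).

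The main technical step, and the part I expect to be the hardest, is controlling the normalisation and the order-independence. The algorithm processes arrivals ``in an arbitrary order,'' so I must check that the distribution of the final matching does not depend on that order; this is a consistency/exchangeability claim about the sequential assignment scheme defined by \reff{truncate}. I would prove it by showing the joint law of the matching can be written in a symmetric product form, for instance by an induction on $k_1$ in which one conditions on the assignment of a single arrival and verifies the residual process is again of the same type with parameters $(k_1-1,k_2)$. This inductive step is where the precise form of the denominator $1+q+\dots+q^{k_1+k_2-r-1}$ in \reff{truncate} will be needed, to make the recursion close cleanly.

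Finally, with the per-configuration weight computed, I would match it against the target stationary distribution. The cleanest route is to recognise the two-type stationary weights as proportional to $q^{\,\mathrm{inv}(D)}$ for an appropriate inversion statistic $\mathrm{inv}$ counting pairs where a type-$2$ particle sits cyclically before a type-$1$ particle that ``should'' have passed it, and then to verify the algorithm reproduces exactly this $q$-weighting. Since both the algorithm's output law and the stationary law are probability distributions on the same finite set of configurations, it suffices to check proportionality, so I need not track the normalising constant explicitly; a sanity check at $q=0$, where \reff{truncate} forces each arrival to take the \emph{nearest} available service, should recover the known TASEP construction of \cite{FerMarmulti} and provides a useful consistency test for the general computation.
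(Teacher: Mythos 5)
The top-level strategy---compute the law of the algorithm's output configuration directly and compare it to the known stationary law---is the same as the paper's, and several of your observations (the service set is forced by $D$, the arrival set is not, the $q=0$ degeneration recovers \cite{FerMarmulti}) are correct. But there are two genuine gaps. First, the target you propose to match against is wrong: the stationary distribution of the $2$-type ASEP on the ring is \emph{not} of the form $q^{\mathrm{inv}(D)}/Z$ for any inversion statistic. By Theorem \ref{PEMtheorem} it is a trace of a product of infinite-dimensional matrices $X^{(2)}_1=\mI+\mdelta$, $X^{(2)}_2=\malpha$, $X^{(2)}_\infty=\mI+\mepsilon$; each configuration's weight is itself a sum over queue-length profiles, and generic configurations carry polynomial rather than monomial weights in $q$ (compare the explicit probabilities in Figure \ref{fig:ring4}). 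Consequently the hoped-for ``telescoping'' of the truncated-geometric factors to a single power of $q$ cannot occur: what the sum over arrival sets and matchings consistent with $D$ actually produces is $\operatorname{trace}\prod_i X^{(2)}_{D_i}$, and identifying it as such requires the correspondence between queue transitions and matrix entries (Lemma \ref{lemma:matrixweights}), which is the content of the paper's proof of Theorem \ref{thm:2statring}.

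Second, the normalisation and order-independence issues are harder than your sketch suggests, and the proposal is missing the device the paper uses to handle them. The denominators $[k_1+k_2-r]_q$ in \reff{truncate} do \emph{not} multiply to a configuration-independent constant, because an arrival coinciding with an unassigned service is matched deterministically and contributes no such factor, and which arrivals coincide depends on the processing order and on the matching itself. The paper resolves both problems at once by attaching i.i.d.\ geometric ``marks'' $B_i$ to the service times and rewriting both the algorithm and the weight as statements about compatibility with the marks, via $w(Q|A,S)=\PP_B(Q\sim B)$; Lemmas \ref{twoQlemma}--\ref{conditionallemma} then show that the departure configuration is a function of the marks alone (which gives order-independence for free) and that $\sum_Q w(Q|A,S)=(1-q^{k_2})^{-1}$ depends on $A$ and $S$ only through $k_2$ (Lemma \ref{lem:denominator2}) --- the step the paper explicitly flags as central and non-obvious. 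Without a substitute for this lemma your proportionality argument does not close, and the proposed induction on $k_1$ for exchangeability is not developed far enough to see that it would.
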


Now we generalise this algorithm to all $N\geq 2$, 
in a recursive way.  
Fix $L$ and $k_1, \dots, k_N$. The following algorithm
generates an algorithm from the stationary distribution
of the $N$-type ASEP on $\ZZ_L$ with particle counts $k_1,
\dots, k_N$ (that is, with $k_n$ particles of type $n$ for
$1\leq n\leq N$). The input into the algorithm
is a collection of $k_1+\dots+k_N$ service times, along with
an arrival process $A=(A_i, i\in\ZZ_L)\in\{1,2,\dots, N,\infty\}^{\ZZ_L}$, which is itself drawn from the stationary distribution 
of the $(N-1)$-type ASEP on $\ZZ_L$ with particle counts
$k_1,\dots, k_{N-1}$. If $A_i=n$ where $1\leq n\leq N$,
there is an arrival of type $n$ at time $i$. If $A_i=\infty$,
there is no arrival at time $i$. The algorithm 
gives an output in which each time in $\ZZ_L$ is a departure 
type of some customer with a type $n\in\{1,2,\dots, N-1\}$, 
or an unused service time, or a time which had no service. 
This is used to define a configuration in $\{1,2,\dots, N\}^{\ZZ_L}$.

\begin{algorithm}\label{multialgo}
$\, $
\begin{itemize}
\item Choose the arrival process according to the 
stationary distribution of the $(N-1)$-type ASEP
on $\ZZ_L$ with 
particle counts
$k_1,k_2,\dots, k_{N-1}$, and independently
choose the set of service times uniformly among all subsets of
size $K=k_1+k_2+\dots+k_N$. 
\item
Now consider the arrival times one by one, 
as in Algorithm \ref{2algo}, but importantly this is
now done in order of type. We start by considering 
each of the type-1 arrivals, then each of the type-2 arrivals, and so on. The order in which arrivals of the same type are considered is arbitrary (for example, left to right). 

We assign a different service time to each considered arrival in turn, as follows. 
Suppose we have already processed $r$ of the arrivals,
where $0\leq r<k_1+k_2+\dots+k_{N-1}$.  This means we have already assigned 
$r$ of the $K$ service times.
Now look at an arrival we have not processed yet -- say it occurs at time $i$. We wish to assign it a departure time from among
those service times which are not yet assigned.
If there is a service
at time $i$ which has not yet been assigned, 
then assign it to this arrival, and we are done. Otherwise,
let the remaining $K-r$ available service times be $i_1, i_2,\dots, i_{K-r}$; we list
these sites in cyclic order around the ring starting from $i$, so that
\[
0<[(i_1-i)\!\!\mod L] < [(i_2-i)\!\!\mod L] < \dots < [(i_{K-r}-i)\!\!\mod L].
\]
Now we assign the arrival at $i$ to the service at $i_j$ 
with probability 
\begin{equation}\label{truncate2}
q^{j-1}/(1+q+q^2+\dots+q^{K-r-1}).
\end{equation} 
\item
Having done this for all arrivals, we have chosen $k_1+k_2+\dots+k_{N-1}$ 
departure times, and the remaining
$k_{N}$ service times are unused. Now define the configuration 
$D\in\{1,2,\dots, N\}^{\ZZ_L}$ as follows: 
\begin{equation}\label{multistatring}
D_i=\begin{cases}
n&\text{ if a departure of type $n$ arrival occurs at } i, \text{ for }1\leq n\leq N-1\\
N&\text{ if an unused service occurs at } i\\
\infty&\text{ if no service was available at } i\\
\end{cases}.
\end{equation}

\end{itemize}
\end{algorithm}

\begin{theorem}\label{thm:multialgo}
Algorithm \ref{multialgo} generates a configuration $(D_i, i\in\ZZ_L)$ whose distribution is the stationary distribution of the $N$-type ASEP on $\ZZ_L$ with particle counts $k_1,\dots, k_N$.
\end{theorem}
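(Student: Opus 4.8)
The plan is to prove Theorem~\ref{thm:multialgo} by induction on $N$, using Theorem~\ref{thm:2algo} as the base case and exploiting the layered structure of Algorithm~\ref{multialgo}. The key observation is that Algorithm~\ref{multialgo} is built so that its input arrival process $A$ is itself distributed according to the stationary law of the $(N-1)$-type ASEP, which by the inductive hypothesis is exactly what is produced by Algorithm~\ref{multialgo} at level $N-1$. So the real content is the \emph{single step} of the recursion: given an arrival process $A$ with values in $\{1,\dots,N-1,\infty\}$ and an independent uniform set of $K=k_1+\dots+k_N$ service times, the queueing map with $q$-randomised services produces an output $D\in\{1,\dots,N\}^{\ZZ_L}$, and I need to identify what this map does to distributions.

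First I would set up the one-step claim as an operator statement. Let $\Phi$ denote the (random) map sending an arrival configuration to an output configuration via the service-assignment rule of Algorithm~\ref{multialgo}; the randomness is in the service-time set and in the choices \reff{truncate2}. The crucial structural facts to verify are: (i) the output $D$, projected by collapsing types $\{1,\dots,N-1\}$ into a single ``particle'' type and leaving $N,\infty$ as before, reproduces a \emph{one-type} $q$-randomised queue acting on the corresponding projection of $A$; and (ii) processing arrivals in order of type means that when the type-$n$ arrivals are being served, the already-departed higher-priority customers (types $<n$) behave exactly as the ``arrivals'' of a lower-level system. This is precisely the priority structure that, in the $q=0$ case of \cite{FerMarmulti}, makes the tandem of queues project correctly onto each of the coupled one-type processes. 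I would phrase the induction so that each projection of $D$ onto types $\le n$ is stationary for the corresponding one-type ASEP, which forces the full law to be the multi-type stationary distribution once combined with a matrix-product identity.

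The main technical engine, and the step I expect to be the genuine obstacle, is connecting the queueing map $\Phi$ to the quadratic algebra of Prolhac, Evans and Mallick promised in Section~\ref{sec:matrices}. The intended route (as signalled in the introduction) is \emph{not} the time-reversal argument of \cite{FerMarmulti}, but rather to exhibit explicit matrices satisfying the PEM algebra whose action encodes one step of the $q$-randomised queue. Concretely, I would introduce matrices indexed by queue-length $k$ whose entries reproduce the weights $q^{j-1}/(1+q+\dots+q^{K-r-1})$ of \reff{truncate2}, recognise these as Markov transition matrices for the queue-length process, and check that they satisfy the defining quadratic relations of the PEM ansatz. Once the matrices are in hand, the stationarity of the output follows from the standard matrix-product computation: the weight that Algorithm~\ref{multialgo} assigns to a configuration equals, up to normalisation, the trace of a product of these matrices over the ring, which is exactly the PEM formula for the ASEP stationary weight.

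Finally I would assemble the pieces. The delicate point is handling the factor $(1+q+\dots+q^{K-r-1})^{-1}$ in \reff{truncate2}: the denominator depends on how many services remain unassigned at the moment each arrival is processed, so I would need to track the normalising constants carefully and verify that, summed over all valid departure-time assignments, they telescope into the global normalisation (the partition function) appearing in the matrix-product trace. This bookkeeping, together with checking that the order-by-type processing does not introduce spurious dependence on the arbitrary within-type ordering, is where the argument requires care; the probabilistic and algebraic statements will then combine to yield that the law of $D$ is the claimed $N$-type stationary distribution, completing the induction.
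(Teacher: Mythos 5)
Your high-level architecture matches the paper's: recursion over $N$, with the single queueing step analysed through the Prolhac--Evans--Mallick matrix product (Theorem \ref{PEMtheorem}) using explicit matrices that act as queue transition matrices. But there is a genuine gap at exactly the point you defer as ``bookkeeping''. The PEM trace formula assigns weights to \emph{time-indexed} transitions of a Markovian queue-length process: the weight of a step depends on the current queue contents, as in (\ref{multiweightdef}) where a type-$n$ departure carries weight $q^{Q^{(\leq n-1)}(i)}(1-q^{Q^{(n)}(i)})$. Algorithm \ref{multialgo}, by contrast, assigns probabilities \emph{per arrival}, processed in order of type rather than in time order, and the denominator $1+q+\dots+q^{K-r-1}$ in (\ref{truncate2}) depends on the number $r$ of arrivals already processed --- a global quantity, not a local queue state. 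So there are no matrices whose entries ``reproduce the weights'' of (\ref{truncate2}) inside a site-indexed matrix product, and the direct connection you propose cannot be made. The paper bridges the two descriptions by first proving the queue-weight version (Theorem \ref{thm:Nstatring}) and then separately establishing equivalence of the algorithm with the weight measure via an auxiliary construction: i.i.d.\ geometric ``marks'' at the service times; the fact that all queue trajectories compatible with a given set of marks differ by a constant and yield the same departure configuration (Lemma \ref{twoQlemma}); the existence of a minimal compatible trajectory (Lemma \ref{Qminlemma}); and the computation showing that the total weight of a compatibility class is $(1-q^{k_2})^{-1}\cdots(1-q^{k_N})^{-1}$, independent of $A$ and $S$ (Lemmas \ref{conditionallemma}, \ref{multiconditionallemma}, \ref{lem:denominatorN}). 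That constancy of the normaliser is the non-obvious step that makes the proportionality argument in the trace computation legitimate; it is not a routine telescoping, and your proposal does not supply a substitute for it.

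A secondary issue: your steps (i)--(ii) suggest that verifying stationarity of the projections onto types $\leq n$ ``forces the full law'' once combined with a matrix-product identity. Those one-type projections are all uniform and carry essentially no information about the joint distribution (the paper stresses this in the introduction), so no argument routed through one-type projections can pin down the $N$-type law. The paper instead computes the weight of the full configuration $D$ directly, summing over $(N-1)$-type arrival configurations weighted by $\nu^{(N-1)}_{k_1,\dots,k_{N-1}}$ and exploiting the tensor recursion (\ref{Xdef}); the within-type processing order, which you rightly flag, is handled automatically because the marks construction makes the resulting departure configuration independent of that order.
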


To generate a sample from an $N$-type TASEP, we need to 
apply Algorithm \ref{multialgo} $N-1$ times in all. 
The $n$th iteration takes as arrival process a configuration 
whose distribution is stationary for an $n$-type ASEP, 
along with an independent service process, and outputs a configuration whose distribution is stationary for an $(n+1)$-type
ASEP. (The first iteration, with $n=1$, is equivalent to Algorithm \ref{2algo}.)

We can combine the $N-1$ iterations into a 
``multi-line diagram" with $N$ lines, 
as was done for the $q=0$ case in \cite{FerMarmulti}.
The $n$th line of
the diagram is a $n$-type ASEP configuration, i.e.\ 
a configuration in $\{1,\dots,n,\infty\}^{\ZZ_L}$, 
with particle counts $k_1, \dots, k_n$. 
It is the arrival process for the $n$th iteration
of the algorithm (if $1\leq n\leq N-1$) and the
output configuration of the $(n-1)$st iteration of the algorithm
(if $2\leq n\leq N$). Specifically, the last line of
the diagram gives a sample from the $N$-type stationary distribution.

\begin{figure}[htb]
\begin{center}
\boxed{
\includegraphics[width=0.75\textwidth]{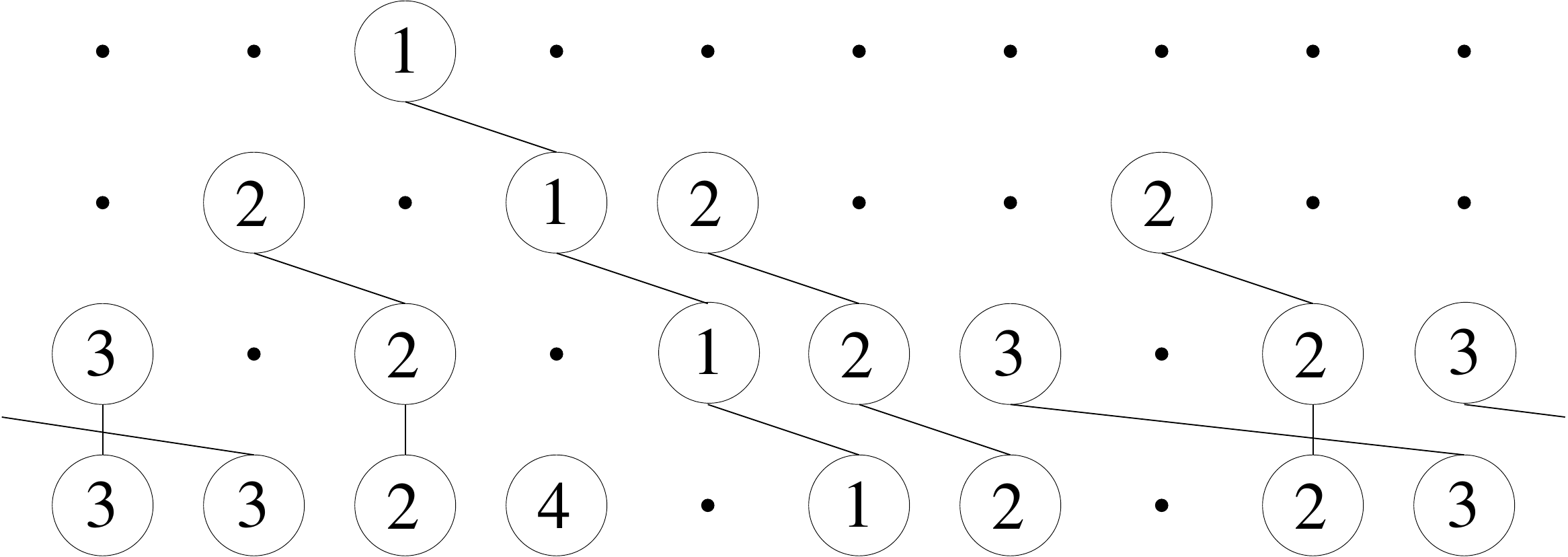}
}
\vspace{0.5cm}

\boxed{
\includegraphics[width=0.75\textwidth]{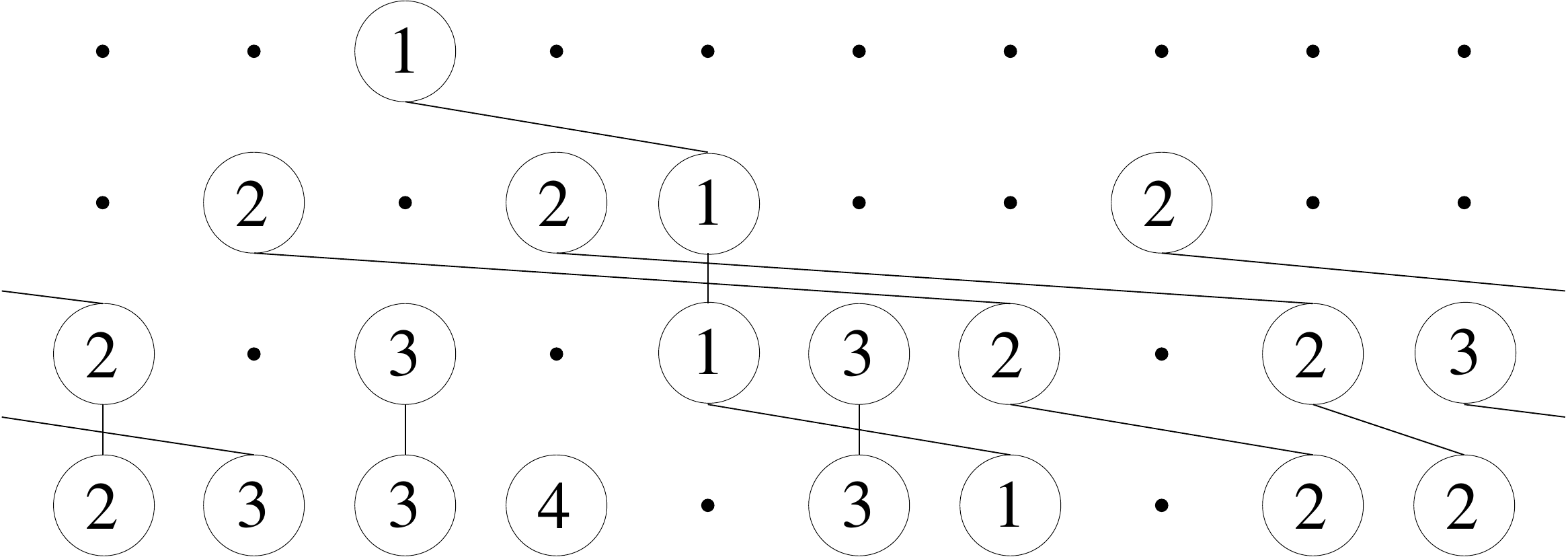}
}
\caption{\label{fig:multiline}
Examples of multi-line diagrams, 
for $N=4$, $L=10$, and $(k_1, k_2, k_3, k_4)=(1,3,3,1)$.
The configuration of occupied sites is the same in both cases.
The upper diagram then shows the deterministic assignment of types to 
particles in the case $q=0$. The lower diagram shows one of the 
(many) other possible assignments of types to particles when $q>0$,
and ``rejected services" are allowed.}
\end{center}
\end{figure}

On the $n$th line, the number of occupied sites (i.e.\ the sites where the value of the configuration is finite rather than infinity) is $k_1+\dots+k_n$. Ignoring the types, the set of occupied sites
is uniform among all subsets of $\ZZ_L$ of size $k_1+\dots+k_n$;
furthermore, the sets of occupied sets on different lines is 
independent. 

Note that for $q=0$ (the case of the TASEP), the only randomness
in Algorithm \ref{multialgo} is in the first step when the arrival and service processes are chosen. In that case, whenever the algorithm looks to assign a departure time to an arrival at time $i$, it chooses the first so-far unassigned service time in $i, i+1, i+2, \dots$. In this case, given the sets of occupied sites on 
the different lines of the multi-line diagram, the assignment of 
types to the occupied sites is deterministic. 
This gives the original algorithm of \cite{FerMarmulti}. 

Figure \ref{fig:multiline} shows two multi-line diagrams
for the case $N=4$, $L=10$, and $(k_1, k_2, k_3, k_4)=(1,3,3,1)$.
The sets of occupied sites are the same for the two 
diagrams. The upper diagram is the one which results (deterministically) from the configuration of occupied sites in the case $q=0$. The lower diagram shows another possible configuration 
(one of many) when $q>0$. 

\subsection{Common denominators}
\label{sec:main-denominators}
By considering the various sources of randomness
that go into the construction of a multi-line diagram
using repeated applications of Algorithm \ref{multialgo},
we can obtain the following result
giving a common denominator for the
ASEP probabilities. We write 
$[n]_q=1+q+\dots+q^{n-1}$, and 
$[n]_q!=[1]_q[2]_q\dots[n]_q$.
\begin{theorem}\label{thm:denominators}
Consider the $N$-type ASEP on $\ZZ_L$,
with particle counts $(k_1, \dots, k_N)$. 
The stationary probability of every state
is a polynomial in $q$ with non-negative integer
coefficients divided by the common denominator
\begin{multline}
\label{generaldenom}
\Ch{L}{k_1}
\Ch{L}{k_1+k_2}
\dots
\Ch{L}{k_1+k_2+\dots+k_N}
\\
\times
\frac{
[k_1+k_{2}]_q!
}
{
[k_{2}]_q!
}
\frac{
[k_1+k_2+k_{3}]_q!
}
{
[k_{3}]_q!
}
\dots
\frac{
[k_1+k_2+\dots+k_{N}]_q!
}
{
[k_{N}]_q!
}.
\end{multline}
\end{theorem}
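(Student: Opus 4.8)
The plan is to read the common denominator directly off the probabilities that appear when the multi-line diagram is built by repeated application of Algorithm \ref{multialgo}. By Theorem \ref{thm:multialgo}, the stationary probability of a configuration $\eta$, which I will denote $\pi(\eta)$, equals the probability that the full $N$-line construction outputs $\eta$ on its last line. Writing this as a sum over all multi-line diagrams $D$ (a diagram recording all $N$ lines together with the assignment of each arrival to a service), we get $\pi(\eta)=\sum_{D:\,\text{last line}=\eta}\P(D)$. Letting $Q$ denote the quantity \reff{generaldenom}, it then suffices to prove the stronger statement that $Q\cdot\P(D)$ is a polynomial in $q$ with non-negative integer coefficients for each individual diagram $D$, since a sum of such polynomials is again of this form.

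First I would factorise $\P(D)$ according to the two sources of randomness in the construction. The occupied sites --- the type-$1$ configuration forming line $1$, and the service set chosen in the iteration that produces line $m+1$ for each $1\le m\le N-1$ --- are uniform and independent over subsets of $\ZZ_L$ of sizes $k_1,\,k_1+k_2,\,\dots,\,k_1+\dots+k_N$. They contribute to $\P(D)$ the factor $\prod_{j=1}^{N}\Ch{L}{k_1+\dots+k_j}^{-1}$, which is the same for every diagram and matches exactly the binomial part of $Q$. The rest of the randomness lies in the arrival-to-service assignments; each assignment contributes either the factor $1$ (when a service is available at the arrival's own time, so the choice is forced) or a factor of the form \reff{truncate2}, i.e.\ $q^{j-1}/[K-r]_q$.

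Next I would tally the assignment denominators within a single iteration. In the iteration producing line $m+1$ there are $K_m:=k_1+\dots+k_m$ arrivals and $K_{m+1}:=k_1+\dots+k_{m+1}$ services; processing the arrivals one at a time, the count $r$ of already-processed arrivals runs through $0,1,\dots,K_m-1$, and the step at count $r$ contributes (if it is not forced) a denominator $[K_{m+1}-r]_q$. The product of these possible denominators over the whole iteration is $[K_{m+1}]_q[K_{m+1}-1]_q\cdots[k_{m+1}+1]_q=[K_{m+1}]_q!\,/\,[k_{m+1}]_q!$, which is precisely the $m$-th $q$-factorial ratio in $Q$. Hence multiplying $\P(D)$ by $Q$ cancels every binomial denominator exactly and cancels each assignment denominator against one factor of the corresponding $q$-factorial ratio; what remains is a product of monomials $q^{j-1}$ (from the non-forced steps) and of the leftover $q$-integers $[K_{m+1}-r]_q$ (for those $r$ whose step was forced, and so produced no denominator). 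As every $[n]_q$ and every $q^{j-1}$ has non-negative integer coefficients, $Q\cdot\P(D)$ does too, and summing over $D$ completes the proof.

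The one point needing genuine care --- and the only real obstacle --- is this last bookkeeping step: one must check that for every forced assignment the matching factor $[K_{m+1}-r]_q$ supplied by $Q$ is \emph{not} cancelled, so that it stays in the numerator and no fractional or negative-power terms survive. This follows cleanly once one notes that the denominators arising in a given iteration form exactly a sub-multiset of $\{[K_{m+1}-r]_q:0\le r<K_m\}$ indexed by the non-forced steps, whereas $Q$ contributes the entire product over $0\le r<K_m$. It is worth remarking that the order in which arrivals are processed (by type, and arbitrarily within a type) plays no role here: it changes only which exponent $j$ occurs at each step, not the multiset of denominators, so the denominator count --- and hence $Q$ --- is the same regardless.
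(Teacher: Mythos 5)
Your proposal is correct and follows essentially the same route as the paper's proof in Section \ref{sec:denominatorsproof}: decompose the stationary probability as a sum over multi-line diagrams, separate the uniform choice of occupied sites (yielding the binomial factors) from the arrival-to-service assignments (whose possible denominators over one iteration multiply to $[K_{m+1}]_q!/[k_{m+1}]_q!$), and observe that all numerators are powers of $q$. Your explicit bookkeeping for the forced assignments --- that the unused factors $[K_{m+1}-r]_q$ simply remain in the numerator and still have non-negative integer coefficients --- is a point the paper passes over more quickly, but it is the same argument.
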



A special case is that of a process on $\ZZ_L$
with $L$ different classes of particle, as depicted in Figure \ref{fig:convoys}.
All other processes
on $\ZZ_L$ can be obtained as projections of this process.
We can take $N=L-1$ and $k_1=k_2=\dots=k_{L-1}=1$
(it is equivalent to regard the final particle as a
particle of type $L$ or as a hole). This gives the following result:

\begin{corollary}\label{cor:Ldenominators}
The stationary probabilities of the ASEP on $\ZZ_L$
with one particle of each of $L$ different classes
are given by polynomials in $q$ with non-negative
integer coordinates divided by the common denominator
\begin{equation}\label{predenom}
\Ch{L}{1}
\Ch{L}{2}
\dots
\Ch{L}{L-1}
[2]_q!
[3]_q!
\dots
[L-1]_q!.
\end{equation}
\end{corollary}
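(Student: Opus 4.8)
The plan is to deduce this directly from Theorem \ref{thm:denominators} rather than to reprove anything from scratch. The point to be careful about is that a naive substitution of $N=L$ and $k_1=\dots=k_L=1$ into \reff{generaldenom} does \emph{not} immediately reproduce \reff{predenom}. Indeed, since $k_1+\dots+k_n=n$ and $[1]_q!=1$, the binomial part collapses to $\Ch{L}{1}\Ch{L}{2}\cdots\Ch{L}{L}$ and the $q$-factorial part to $[2]_q![3]_q!\cdots[L]_q!$; using $\Ch{L}{L}=1$, this equals \reff{predenom} multiplied by a spurious extra factor $[L]_q!$. So the actual content of the corollary is to show that this last factor may be dropped.

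First I would eliminate that extra factor by exploiting the fact that, when every site is occupied, the largest particle type plays exactly the role of a hole. Concretely, consider the bijection $\Phi$ which sends an $(L-1)$-type configuration on $\ZZ_L$ with particle counts $(1,\dots,1)$ --- such a configuration carries one of each type $1,\dots,L-1$ together with a single hole --- to the $L$-type configuration obtained by relabelling that hole as a particle of type $L$. Because the ASEP swap rates depend only on the relative order of neighbouring values, and because the hole $\infty$ and the type $L$ both sit at the top of their respective orderings with the same relation to the types $1,\dots,L-1$, the map $\Phi$ intertwines the two generators. By irreducibility the stationary distribution is unique, so $\Phi$ carries the $(L-1)$-type stationary distribution onto the $L$-type one with all probabilities unchanged; in particular the two systems have identical common denominators.

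It then suffices to apply Theorem \ref{thm:denominators} to the $(L-1)$-type ASEP, taking $N=L-1$ and $k_1=\dots=k_{L-1}=1$. Here $k_1+\dots+k_n=n$ for $n=1,\dots,L-1$, so the binomial product becomes $\Ch{L}{1}\Ch{L}{2}\cdots\Ch{L}{L-1}$, while each factor $[k_1+\dots+k_m]_q!/[k_m]_q!$ equals $[m]_q!/[1]_q!=[m]_q!$, giving the $q$-factorial product $[2]_q![3]_q!\cdots[L-1]_q!$. Multiplying these yields precisely \reff{predenom}, and the non-negativity and integrality of the numerator coefficients are inherited without change from Theorem \ref{thm:denominators}.

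The main obstacle, and the only non-formal step, is the justification for removing the $[L]_q!$ factor; I would present the bijection $\Phi$ as the clean way to handle it. An equivalent, more hands-on justification is to inspect the final iteration of Algorithm \ref{multialgo} in this case, where $K=k_1+\dots+k_L=L$ forces every site to be a service time. Then each of the $L-1$ arrivals finds an as-yet-unassigned service at its own site, no arrival ever reaches past its own site, and the probabilistic weights \reff{truncate2} are never invoked. Hence the final iteration is deterministic and contributes no $q$-denominators, which confirms that the $[L]_q!$ term present in the general bound is genuinely absent here.
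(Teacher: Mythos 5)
Your proposal is correct and follows essentially the same route as the paper: the paper's proof (end of Section \ref{sec:denominatorsproof}) likewise treats the fully-packed $L$-type system as an $(L-1)$-type system with $k_r=1$ for all $r$, the single hole playing the role of the type-$L$ particle, and then substitutes into \reff{generaldenom}. Your bijection $\Phi$ and the inspection of the final iteration of Algorithm \ref{multialgo} simply make explicit the identification that the paper states in one parenthetical sentence.
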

The expression in (\ref{predenom}) can also 
be written as
\begin{equation}
\label{Ltypedenom}
\Ch{L}{1}
\Ch{L}{2}
\dots
\Ch{L}{L-1}
(1+q)^{L-2}(1+q+q^2)^{L-3}\dots(1+q+q^2+\dots+q^{L-2})^1.
\end{equation}
We note that a related formula appears in work of Cantini, de Gier and Wheeler \cite{CdGW} in a more general setting of Macdonald polynomials; specialising their formula in their Section 5 to the ASEP, one obtains an expression like (\ref{Ltypedenom}) but with an extra factor of $(1-q)^{L(L-1)/2}$, and without the restriction to non-negative integer coefficients in the numerator. 

The denominators given by (\ref{generaldenom}) 
and (\ref{Ltypedenom}) may well not be the best possible. 
In fact, these are common denominators for the probabilities
of all multi-line diagrams; the projection from 
the multi-line diagrams to the bottom line giving
a single ASEP configuration may introduce further common
factors. 

\begin{figure}[htb]
\begin{minipage}{0.325\linewidth}
\centering
\includegraphics[width=0.6\linewidth]{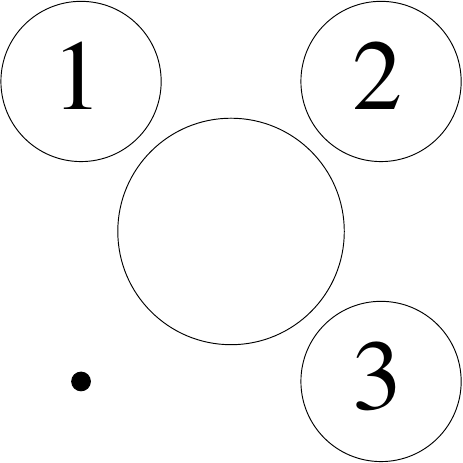}
\\

\vspace{0.5cm}
$\displaystyle
\frac{9+7q+7q^2+q^3}{96(1+q)(1+q+q^2)}$
\end{minipage}
\begin{minipage}{0.325\linewidth}
\centering
\includegraphics[width=0.6\linewidth]{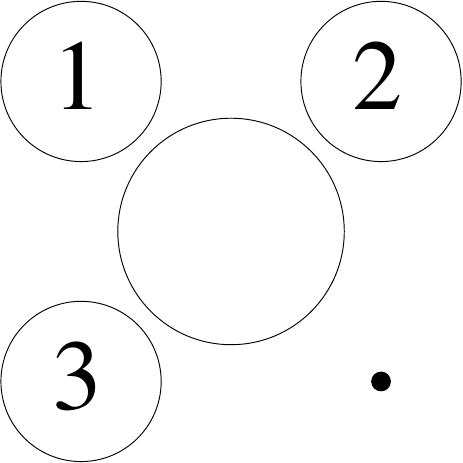}
\\

\vspace{0.5cm}
$\displaystyle
\frac{3+9q+9q^2+3q^3}{96(1+q)(1+q+q^2)}$
\end{minipage}
\begin{minipage}{0.325\linewidth}
\centering
\includegraphics[width=0.6\linewidth]{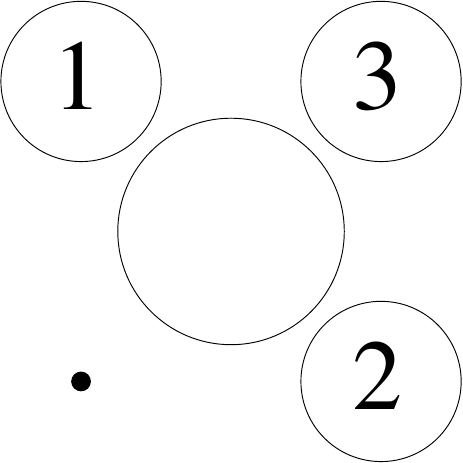}
\\

\vspace{0.5cm}
$\displaystyle
\frac{3+11q+5q^2+5q^3}{96(1+q)(1+q+q^2)}$
\end{minipage}

\vspace{0.4cm}

\begin{minipage}{0.325\linewidth}
\centering
\includegraphics[width=0.6\linewidth]{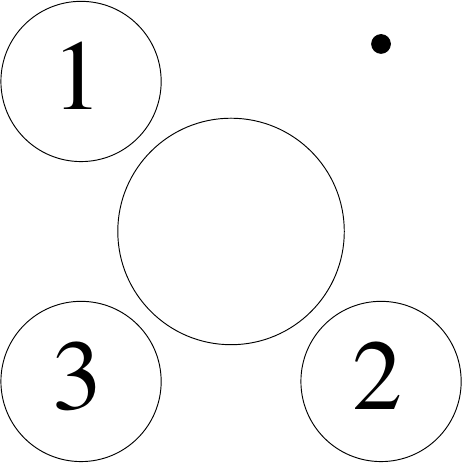}
\\

\vspace{0.5cm}
$\displaystyle
\frac{5+5q+11q^2+3q^3}{96(1+q)(1+q+q^2)}$
\end{minipage}
\begin{minipage}{0.32\linewidth}
\centering
\includegraphics[width=0.6\linewidth]{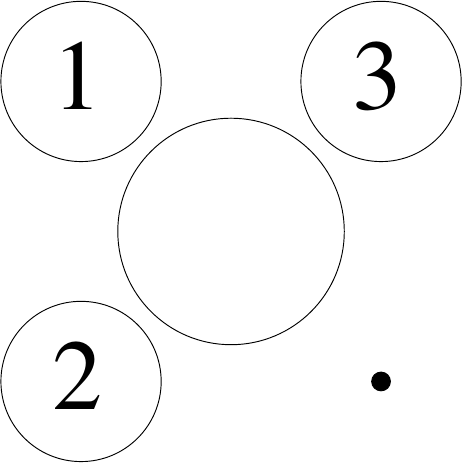}
\\

\vspace{0.5cm}
$\displaystyle
\frac{3+9q+9q^2+3q^3}{96(1+q)(1+q+q^2)}$
\end{minipage}
\begin{minipage}{0.32\linewidth}
\centering
\includegraphics[width=0.6\linewidth]{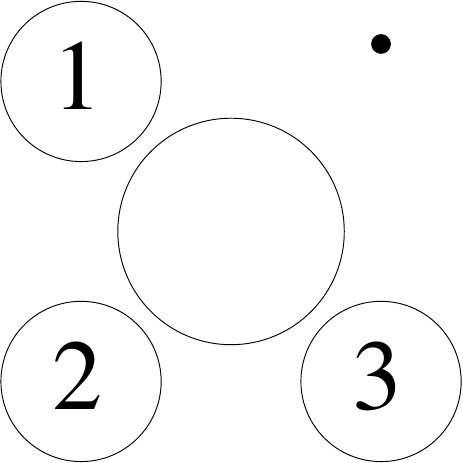}
\\

\vspace{0.5cm}
$\displaystyle
\frac{1+7q+7q^2+9q^3}{96(1+q)(1+q+q^2)}$
\end{minipage}
\caption{
\label{fig:ring4}
Stationary probabilities
for a system with $4$ sites 
(numbered clockwise around the ring). 
The particles are labelled with their type; 
the single hole may equivalently be seen as a particle of 
type $4$. There are $6$ possible configurations,
up to rotation. Note the symmetry between 
$q$ and $1/q$; the probabilities remain unchanged
under replacing $q$ by $1/q$ and reversing the 
order of the particles (since replacing $q$ by $1/q$
is equivalent to exchanging left and right and
multiplying time by a factor $1/q$; the time-change
has no effect on the stationary distribution). 
Note also that all expressions are 
equal when $q=1$, since this gives the symmetric
exclusion process whose equilibrium is uniform
over all configurations. }
\end{figure}

As an example, we can consider the case 
of the $4$-type system on a ring of size $4$.
Corollary \ref{cor:Ldenominators} 
gives a common denominator of 
$96(1+q)^2(1+q+q^2)=96(1+3q+4q^2+3q^3+q^4)$. 
However, in fact we can dispense with one of the factors 
of $1+q$; the stationary probabilities 
as a function of $q$ are shown in Figure \ref{fig:ring4}.

The extra symmetries of the multiline diagram may be easier to explore in the context of the alternative queueing construction mentioned in Section \ref{sec:alternative}, 
where we do not need to treat particles differently
according to whether a service is available to them
at the time they enter the queue. 

\subsection{Clustering}
\label{sec:main-clustering}
Our final result explains the ``clustering" phenomenon
visible in Figure \ref{fig:convoys}, showing 
samples from the stationary distribution 
of the ASEP with $1000$ distinct labels $1,2,\dots, 1000$ on the ring with $1000$ sites, for different values of $q$. 
A striking feature of the configurations is 
the appearance of long strings of nearby particles with similar labels (``convoys"). 
In this section we explain some aspects of that picture,
by considering an appropriate local limit of stationary distributions as $L\to\infty$.
The analysis will rely in an integral way on the queueing construction of the stationary distribution
of the 2-type system on $\ZZ$ which we develop in
Section \ref{sec:queueing} (see Theorem \ref{thm:Nstatline}). 

Consider the ASEP on $\ZZ_L$, with sites now written for convenience as $-\lfloor\frac L2\rfloor, \dots,$ $-1,$ $0,$ $1,$ $\dots, \lceil\frac L2\rceil-1$
in a cyclic way, and with $L$ particles with distinct types $1$, $2,\dots$, $L$.  (We could equally write the largest-numbered type as $\infty$, i.e.\ a hole.)
The process is irreducible, and its stationary distribution can be constructed
via an ``$(L-1)$-line process" involving repeated applications of Algorithm \ref{multialgo}, as described in Section 
\ref{sec:main-algorithms}.

Let $Y^{(L)}_i$ be the type of the particle at site $i$, in a configuration distributed according to this stationary distribution. We will rescale linearly,
so that all labels lie in $[0,1]$, and also pad the configuration with infinite strings of zeros 
on each side, to define a configuration $W^{(L)}=(W^{(L)}_i, i\in\ZZ)$ in $[0,1]^\ZZ$:
\begin{equation}
\label{WLdef}
W^{(L)}:=
\left(
\dots,0,0,0,\tfrac1L Y^{(L)}_{-\lfloor\frac L2\rfloor},\dots, \tfrac1L Y^{(L)}_{-1},
\tfrac1L Y^{(L)}_{0}, \tfrac1L Y^{(L)}_{1}, \dots, \tfrac1L Y^{(L)}_{\lceil\frac L2\rceil-1},0,0,0,
\dots\right),
\end{equation}
or more precisely, for $i\in\ZZ$,
\[
W^{(L)}_i=\begin{cases}
\tfrac{1}{L}Y^{(L)}_i&\text{for }-\lfloor\frac L2\rfloor\leq i\leq \lceil\frac L2\rceil-1,\\
0&\text{otherwise}
\end{cases}.
\]

The following results describe some aspects of the clustering phenomenon observed
in the configurations in Figure \ref{fig:convoys}:
\begin{theorem}\label{thm:Wlimit}
\begin{itemize}
\item[(a)]
As $L\to\infty$, $W^{(L)}=(W^{(L)}_i, i\in\ZZ)$ converges in distribution 
(with respect to the product topology on $[0,1]^\ZZ$) 
to a limit which we denote by $W=(W_i, i\in\ZZ)$. 
The distribution of $W$ is translation-invariant. 
\item[(b)]
$(W_0, W_1)$ has joint distribution on $[0,1]^2$ with density given by
\[
f(x,y)dx\,dy + f^*(x)\ind(x=y)dx
\]
where
\begin{align}
\label{fxy}
f(x,y)=&
\begin{cases}
1&\text{ for }x<y,\\
2(1-q)(x-y)+q&\text{ for }x>y,
\end{cases}
\\
\nonumber
\\
\label{fstarx}
f^*(x)&=(1-q)x(1-x).
\end{align}
$[$By this we mean that if $A$ is a measurable subset of $[0,1]^2$, then
\[
\PP\big((W_0,W_1)\in A, W_0\ne W_1\big)=\int_A f(x,y)dx\,dy,
\]
and if $B$ is any measurable subset of $[0,1]$, then 
\[
\PP(W_0=W_1, W_0\in B)=\int_B f^*(x)dx.
\,\,\,]
\]

It follows that 
\begin{align}
\nonumber
\PP(W_0<W_1)&=\frac12\\
\label{W1W2}
\PP(W_0=W_1)&=\frac{1-q}6\\
\nonumber
\PP(W_0>W_1)&=\frac{2+q}6.
\end{align}
\item[(c)]
With probability 1, there are infinitely many $k$ such that $W_0=W_k$.
\end{itemize}
\end{theorem}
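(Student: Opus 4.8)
The plan is to prove all three parts by pushing the multi-line construction of Theorem~\ref{thm:multialgo} to its $L\to\infty$ limit, using the queueing construction of the $2$-type system on $\ZZ$ (Theorem~\ref{thm:Nstatline}) as the basic computational tool. For part (a), tightness is free: $[0,1]^\ZZ$ with the product topology is compact, so $(W^{(L)})_L$ is automatically tight and it suffices to prove convergence of all finite-dimensional distributions and to identify the limit. The finite-dimensional law at sites $i_1,\dots,i_r$ is the joint law of the rescaled types $\tfrac1L Y^{(L)}_{i_1},\dots,\tfrac1L Y^{(L)}_{i_r}$, which I will show below (for $r=2$; general $r$ is identical) is determined by local multi-point functions of multi-type ASEPs on $\ZZ$ at prescribed densities, and these converge to the values furnished by Theorem~\ref{thm:Nstatline}. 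Hence the finite-dimensional laws converge and a unique limit $W$ exists. Translation invariance is inherited from the exact rotational symmetry of the stationary measure on $\ZZ_L$: shifting a fixed finite window by a fixed amount alters its law only through the $O(1/L)$ boundary effect of the padding in \reff{WLdef}, which vanishes in the limit.

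For the reduction underlying part (b), the key input is the projection property noted in Section~\ref{sec:intromain}: for any threshold $m$, the indicator field $\big(\ind(Y^{(L)}_i\le m)\big)_i$ is the stationary $1$-type ASEP with $m$ particles, i.e.\ uniform over $m$-subsets. In particular $\PP(W^{(L)}_0\le x)=\lfloor xL\rfloor/L\to x$, so $W_0$ is uniform on $[0,1]$. To get the joint law I fix $0\le x\le y\le 1$ and consider the $3$-type projection with class boundaries at $\lfloor xL\rfloor$ and $\lfloor yL\rfloor$ (types $\le \lfloor xL\rfloor$, types in $(\lfloor xL\rfloor,\lfloor yL\rfloor]$, and the rest), whose densities converge to $(x,\,y-x,\,1-y)$. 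Then
\[
G(x,y):=\PP(W_0\le x,\ W_1\le y)=\lim_{L\to\infty}\PP(\text{site }0\text{ in class }1,\ \text{site }1\text{ in class }1\text{ or }2),
\]
a local two-point function of a $3$-type stationary ASEP on $\ZZ$. Such a measure is obtained by applying the construction once on top of the $2$-type measure of Theorem~\ref{thm:Nstatline}, so all the required quantities reduce to that building block.

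For the computation in part (b), I read off the needed two-point functions from the explicit stationary measure produced by the queueing construction, assemble $G(x,y)$, and recover the law of $(W_0,W_1)$ from its mixed second derivative: the absolutely continuous part gives $f$, while the atom on the diagonal appears as the singular contribution $f^*(x)\,\delta(x-y)$ coming from the non-smoothness of $G$ across $x=y$, where the two class-boundaries merge. I would then check the answer against the marginal (uniformity of $W_0$ forces $\int_0^1 f(x,y)\,dy+f^*(x)=1$), against total mass $1$, and against the ordering probabilities \reff{W1W2}. I expect this explicit evaluation --- carrying the queueing construction through to a clean formula and correctly isolating the singular diagonal part that produces the atom --- to be the main obstacle.

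For part (c), the set $\{k:W_0=W_k\}$ is the ``convoy'' of site $0$, i.e.\ the collection of sites whose unscaled types lie within $o(L)$ of that of site $0$. I would identify this set, through the limiting queueing construction of Theorem~\ref{thm:Nstatline} at the level $v=W_0$, with the returns of an associated queue-length process to a distinguished (empty) state. The claim that the convoy is a.s.\ infinite then amounts to recurrence of that process; because the two classes on either side of the threshold have equal densities in the relevant frame, the associated walk has zero drift and is recurrent, so it returns infinitely often and there are a.s.\ infinitely many $k$ with $W_0=W_k$. Establishing this recurrence (equivalently, the a.s.\ infiniteness of the critical excursion) is the delicate point of part (c); translation invariance and ergodicity of $W$, inherited as in part (a) from the uniquely ergodic ring measures, can be used to upgrade a positive-probability statement to an almost-sure one if needed.
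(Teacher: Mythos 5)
Your overall strategy matches the paper's: part (a) via compactness plus convergence of finite-dimensional laws through finite-type projections and the ring-to-line limit; part (b) via two-point functions of $2$-type (or $3$-type) projections and mixed second derivatives of $G(x,y)=\PP(W_0\le x, W_1\le y)$; part (c) via an associated critical queue. Part (a) is essentially the paper's argument and is fine. However, for parts (b) and (c) you have described the right reduction but not supplied the two pieces of work that the paper's proof actually consists of.

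For part (b), everything reduces to the three quantities $\nu_{\lambda,\mu-\lambda}(2,\infty)$, $\nu_{\lambda,\mu-\lambda}(2,1)$ and $\nu_{\lambda,\mu-\lambda}(2,2)$, and the last of these is genuinely nontrivial when $q>0$: conditioning on an unused service at site $0$ is an exponential tilting of the stationary queue-length distribution by $q^{Q_0}$ (since an unused service occurs with probability $q^k$ from queue length $k$, not only from the empty queue), and evaluating the resulting ratio $\sum_k\pi_k q^{2k}/\sum_k\pi_k q^k$ requires a $q$-series identity. You flag this computation as ``the main obstacle'' and leave it undone; that is precisely the content of the paper's Lemma \ref{lemma:projections}, so as written your part (b) is a plan rather than a proof. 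The cross-checks you propose (marginal uniformity, total mass) would not detect an error in this step.

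For part (c), the gap is more structural. You propose to ``identify'' the convoy $\{k:W_k=W_0\}$ with the unused-service set of a critical queue at level $v=W_0$, and you locate the difficulty in proving recurrence. In fact the recurrence of the zero-drift walk is the easy part; the delicate point is that no such direct identification at the critical level is available, because the $2$-type projection with $\lambda=\mu$ is degenerate (the second-class particles have density zero), so one cannot condition on $W_0=x$ and read off the convoy from a single queue. The paper instead proves a stochastic domination: conditional on $W_0\in[\lambda,\mu)$, the set $\{i:W_i\in[\lambda,\mu)\}$ has the law of the unused-service set of a \emph{subcritical} $(\lambda,\mu)$-queue started from the $q^k$-tilted stationary distribution (Lemma \ref{lemma:Almqueue}), and this set stochastically dominates the unused-service set of the critical $(x,x)$-queue (Lemma \ref{lemma:AlmAxxcomparison}); one then shrinks the interval $[m/N,(m+1)/N)$ and uses that a decreasing intersection of sets each dominating $\cU$ still dominates $\cU$. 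Without this sandwich your argument does not connect the recurrent critical walk to the actual set $\{k:W_k=W_0\}$. A secondary point: for $q>0$ the unused services are not the visits of the queue to the empty state (they occur with probability $q^k$ from any state $k$), so even the recurrence step needs to be phrased in terms of the null-recurrent walk together with the independent acceptance coins, as in Proposition \ref{prop:infinitelymany}. The ergodicity upgrade you mention at the end is not needed once the domination is in place.
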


For each $i$, $W_i$ must have 
uniform distribution on $[0,1]$
(since $W_0$ is the limit in distribution of $Y_0^{(L)}/L$,
and $Y_0^{(L)}$ is uniformly distributed on $\{1,2,\dots, L\}$). 
However, it is definitely not the case that 
the $W_i$ are independent; indeed, two neighbouring values
have positive probability to be equal! Note that the density $f$ of two neighbouring values $(W_0, W_1)$ on 
$x>y$ decays to zero as the diagonal is approached if $q=0$, but not if $q>0$. 


Going beyond part (c), the methods of Section 
\ref{sec:convoys} in fact explain how to describe the 
distribution of the ``convoy" $\{k\geq1: W_k=W_0\}$
conditional on $\{W_0=x\}$ for $x\in[0,1]$ somewhat
explicitly in terms of a random walk on $\ZZ$ whose 
transition probabilities depend on $x$.

In the TASEP case ($q=0$), the process $W$ of Theorem 
\ref{thm:Wlimit}
was considered in a related context by Amir, Angel and Valk{\'o}
\cite{AAV}. Consider a TASEP on $\ZZ$ 
started from an initial state in which site $i$ contains
a particle of type $-i$, for each $i\in\ZZ$; the dynamics
of the process are that each pair of neighbouring particles 
sort themselves into increasing order at rate $1$.
Let $X_i(t)$ be the position at time $t$ of the particle
with label $i$. The limit $U_i=\lim_{t\to\infty}\frac{1}{t}X_i(t)$,
i.e.\ the ``asymptotic speed of particle $i$",
exists with probability $1$ for all $i$ (as follows from a result of 
\cite{MouGui}), and has Uniform$[-1,1]$ distribution.
The \textit{TASEP speed process}
$(U_i, i\in\ZZ)$ was introduced and studied by \cite{AAV}. 
Rescaling by $W_i=(1-U_i)/2$, we get
the process $(W_i, i\in\ZZ)$ with the distribution 
described in Theorem \ref{thm:Wlimit}. The properties (\ref{fxy})-(\ref{W1W2}) for the case $q=0$ are given in Theorem 1.7 of \cite{AAV}, and further results concern for example 
the joint distribution 
of more than two entries. 

Still in the case $q=0$, particularly sharp results for 
stationary distributions on finite rings were then given by Ayyer and Linusson in \cite{AyyerLinussonCorrelations}. Their results include closed-form expressions for ``three-point" probabilities of the form $\PP(Y_0^{(L)}=\ell, Y_1^{(L)}=m, Y_2^{(L)}=n)$, as well as more general ``two-point" probabilities of the form $\PP(Y_0^{(L)}=m, Y_i^{(L)}=n)$. (By taking appropriate limits, one can 
regain some of the formulas from \cite{AAV} for the speed process.)
The proofs involve an intricate combinatorial 
analysis of the multi-line queue construction for the TASEP. 
It would of course be interesting to explore whether an analogous
application of the multi-line constructions presented here could lead
to similar results for multi-point probabilities in the case $q>0$.








\subsection{Related work}
\label{sec:related}
Let us mention some further related work, in addition 
to that discussed above. 
The recursive approach to the construction of multi-type 
particle systems has been extended to a variety of 
different particle systems, including discrete-time TASEPs \cite{MarSchdiscrete}, 
inhomogeneous (or ``multi-rate") versions of 
the multi-type TASEP \cite{AritaMallick, AyyerLinusson, 
Cantini-inhom}, and a variety of zero-range processes
\cite{KMO-ZRP1, KMO-ZRP2, KMO-ZRP-inhom},
and also to the description of joint distributions
of Busemann functions for last-passage percolation
\cite{FanSepp}. 

Connections between the multi-type ASEP on the ring and 
families of symmetric polynomials 
such as Schubert polynomials and Macdonald polynomials
have been studied by various authors, for example by Cantini
in \cite{Cantini-inhom} and in recent work by Corteel, Mandelshtam and Williams
in \cite{CorteelMandelshtamWilliams}; the latter paper involves a description of Macdonald polynomials using objects related to the multi-line diagrams
described in this paper. 

Stationary distributions for the multi-type ASEP on a finite interval with open boundary conditions 
have also been widely studied. Not all sets of boundary rates
are expected to lead to exactly solvable models; however,
a variety of classes of integrable boundary conditions have recently been
established
-- see for example work of Crampe, Finn, Ragoucy and Vanicat
\cite{CrampeFinnRagoucyVanicat2016, FinnRagoucyVanicat2018}, 
and also work of Cantini, Garbali, de Gier and Wheeler
\cite{CdGW,
CGdGW}
where further 
connections to families of orthogonal polynomials are made. 

Systems with closed boundaries on finite or half-infinite intervals 
have recently been considered by Belitsky and Sch\"{u}tz \cite{BelitskySchutz}
(see also \cite{arita2012remarks}); 
as well as describing stationary distributions, 
they obtain duality properties and use them to 
study hydrodynamic limits (including the behaviour of shocks).
The stationary distributions of these systems
are closely related to certain stationary distributions for the system on $\ZZ$ 
which (unlike those considered in this paper) are not translation-invariant; 
instead their projections onto one-type distributions are blocking 
measures in the sense of 
\cite{BraLigMou}. The connections between distributions of this
type and the Mallows measure on permutations 
were previously studied by Gnedin and Olshanskii \cite{GneOls1sided, GneOls2sided}, and recent work by Angel, Holroyd, Hutchcroft and Levy \cite{AngelHolroydHutchcroftLevy} describes
a link between such processes and a model of stable matchings.

Integral formulas for multi-type ASEPs on $\ZZ$ 
with finitely many particles are given by Tracy and Widom in
\cite{TraWidmultiASEP}. 

\subsection{Plan of the paper}
\label{sec:plan}
The rest of the paper is organised as follows. 

In Section \ref{sec:matrices} we introduce the recursive
matrix product construction of Prolhac, Evans and Mallick
\cite{PEM}. We give specific realisations of the relevant 
matrices, which are closely related to Markov transition
matrices for the queueing systems that we use to construct
the multi-type ASEP stationary distribution.

These queueing models are introduced in 
Section \ref{sec:queueing}. We also give results concerning stationary distributions of multi-type ASEPs on $\ZZ$, which may also be considered as main results of the paper in their own right. 

The proofs of Theorems \ref{thm:2algo}, \ref{thm:multialgo}
and \ref{thm:denominators} are then developed in 
Section \ref{sec:mainproofs}.
We cover the particular case $N=2$ in some detail,
with the aim of making the argument in the more general case as easily comprehensible as possible. 

The results concerning stationary distributions 
on $\ZZ$ are proved in Section \ref{sec:Z}; these
are deduced from the corresponding results for processes
on the ring using a rather intricate coupling argument,
which may be of independent interest. 

Those results form a central part of the proof of 
Theorem \ref{thm:Wlimit}, given in Section \ref{sec:convoys}. 
The limit process $W$ of Theorem \ref{thm:Wlimit}
can be identified as a stationary distribution 
for an ASEP on $\ZZ$ whose particle-types are continuous,
and distributed uniformly on $[0,1]$. 
This process is a generalisation of the ``TASEP speed process"
studied by Amir, Angel and Valko in \cite{AAV} 
(see also \cite{FerGonMar} for closely related results). 
The process $W$ can be studied via the projection 
of the continuous-type ASEP onto an $N$-type process; 
much useful information can be extracted already from the 
case $N=2$. The hardest part of the proof is the argument
to establish Theorem \ref{sec:convoys}(c), 
where a subtle argument involving 
stochastic domination between random walks is required.

Finally in Section \ref{sec:alternative} we briefly discuss
an alternative construction involving a modified queueing discipline; 
this would result in a more complicated matrix-product
structure, but a rather simpler and more natural 
formulation in terms of multi-line diagrams and their weights. 

\section{Matrix product framework}
\label{sec:matrices}
In this section we describe the matrix product representation for the stationary distribution of the process on the ring
given by Prolhac, Evans and Mallick \cite{PEM},
which is the starting-point of the proof
of the multi-line construction for the ASEP
stationary distribution.
Our notation will be slightly different from in that paper; since 
we consider the ASEP with jumps left rather than right, the matrices which appear in the matrix product solution
are the transposes of those used in \cite{PEM}. 
This change is convenient since 
now time in the associated queueing systems flows from left to right, and the matrices which appear are closely related to Markov transition matrices. 

Suppose the matrices $\mdelta$, $\mepsilon$ and $\malpha$ satisfy the following relations:
\begin{align}\nonumber
\mepsilon \mdelta - q \mdelta\mepsilon &= (1-q) \mI \\
\label{fundamentalrelations}
\malpha\mdelta &= q\mdelta\malpha \\
\nonumber
\mepsilon\malpha &= q\malpha\mepsilon
\end{align}
At (\ref{adedef}) below, we'll give specific examples of appropriate matrices (which will be infinite-dimensional) 
$\malpha$, $\mdelta$, $\mepsilon$. 

Now we define matrices $X_n^{(N)}$ for $n=1,2,\dots,N,\infty$. 
Recursively, the matrices $X_n^{(N)}$ are defined as sums of tensor
products of matrices $X_{j}^{(N-1)}$.
First let $X_1^{(1)}$ and $X_\infty^{(1)}$ be scalars and equal to 1. 

Now for $N\geq 2$, define
\begin{equation}\label{Xdef}
X_j^{(N)} = \sum_{m=1,\dots,N-1,\infty} a_{j,m}^{(N)}\otimes X_{m}^{(N-1)}
\end{equation}
where the matrices $a_{m,j}^{(N)}$ are given by 
\begin{alignat}{2}
\nonumber
a_{\infty,\infty}^{(N)}&=\mI^{\otimes(N-1)}& \\
\nonumber
a_{m, \infty}^{(N)}&=\mI^{\otimes(m-1)}\otimes \mepsilon\otimes\mI^{\otimes(N-m-1)}
&\text{ for }& m\leq N-1\\
\nonumber
a_{\infty, n}^{(N)}&=\malpha^{\otimes(n-1)}\otimes\mdelta\otimes\mI^{\otimes(N-n-1)}
&\text{ for }& n\leq N-1\\
\label{adef}
a_{m,n}^{(N)}&=
\malpha^{\otimes(n-1)}\otimes \mdelta \otimes \mI^{\otimes(m-n-1)}\otimes\mepsilon
\otimes \mI^{\otimes(N-m-1)}
&\text{ for }& n < m \leq N-1\\
\nonumber 
a_{n,n}^{(N)}&=\malpha^{\otimes(n-1)}\mI^{\otimes(N-n)}&\text{ for }& n\leq N-1\\
\nonumber
a_{\infty, N}^{(N)}&=\malpha^{\otimes(N-1)}&\\
\nonumber
a_{m,n}^{(N)}&=0 &\text{ for }& m<n<\infty
\end{alignat}

The dynamics of the ASEP on the ring $\ZZ_L$ preserve the number of particles of each type. 
Consider the process with $k_n$ particles of type $n$, for $n=1,2,\dots, N$,
where $k_n>0$ for all $n$ and also $k_1+k_2+\dots+k_N<L$. 

\begin{theorem}[Prolhac, Evans and Mallick \cite{PEM}]\label{PEMtheorem}
The stationary distribution of the $N$-type TASEP on $\ZZ_L$, 
with $k_n$ particles of type $n$ for $n=1,2,\dots, N$, is given by 
\begin{equation}\label{matrixproductform}
\nu^{(N,L)}_{k_1, \dots, k_N}(\eta_1, \eta_2, \dots, \eta_L)
=
\frac{1}{Z^{(N,L)}_{k_1, \dots, k_N}}
\operatorname{trace}\left(X^{(N)}_{\eta_1} X^{(N)}_{\eta_2} \dots X^{(N)}_{\eta_L}\right),
\end{equation}
where $Z^{(N,L)}_{k_1,\dots, k_N}$ is a normalizing constant
chosen such that the
sum of the right-hand side over all configurations with the particle counts $k_1, \dots, k_N$
is 1. 
\end{theorem}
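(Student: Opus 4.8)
The plan is to verify directly that the measure defined by the right-hand side of \eqref{matrixproductform} is stationary, using the matrix product ansatz. Since the dynamics preserve the particle counts and the chain restricted to a fixed-count sector is irreducible, it suffices to exhibit a non-negative, non-zero measure supported on the correct sector that is annihilated by the generator; uniqueness then identifies it with the stationary distribution. Writing the generator as $\mathcal{L}=\sum_{i\in\ZZ_L}h_i$, where $h_i$ acts on the neighbouring pair $(i,i+1)$, I would first record the action of a single edge on the unnormalised weight $\mu(\eta)=\operatorname{trace}(X_{\eta_1}\cdots X_{\eta_L})$. A short computation with the rates of the model shows that the contribution of edge $(i,i+1)$ to $(\mu\mathcal{L})(\eta)$ equals $\operatorname{trace}\big(\cdots X_{\eta_{i-1}}\,L_{\eta_i,\eta_{i+1}}\,X_{\eta_{i+2}}\cdots\big)$, where the local two-site matrix is
\begin{equation*}
L_{a,b}=
\begin{cases}
X_b X_a-qX_aX_b & a<b,\\
qX_bX_a-X_aX_b & a>b,\\
0 & a=b.
\end{cases}
\end{equation*}

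The heart of the argument is then to produce auxiliary ``hat'' matrices $\hX_n$ (for $n=1,\dots,N,\infty$), of the same shape as the $X_n^{(N)}$, satisfying the divergence (or ``gradient'') relation
\begin{equation*}
L_{a,b}=\hX_a X_b - X_a \hX_b \qquad\text{for all types }a,b.
\end{equation*}
Granting this, each edge contributes $\operatorname{trace}(\cdots \hX_{\eta_i}X_{\eta_{i+1}}\cdots)-\operatorname{trace}(\cdots X_{\eta_i}\hX_{\eta_{i+1}}\cdots)$, and the hat-on-the-right term of edge $i$ coincides with the hat-on-the-left term of edge $i+1$; summing over the $L$ edges of the ring the contributions therefore telescope to zero (using cyclicity of the trace for the wrap-around term), giving $\mu\mathcal{L}=0$. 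I note that the relation for $a>b$ must be compatible with that for $a<b$ through the symmetric constraint $\hX_aX_b+\hX_bX_a=X_a\hX_b+X_b\hX_a$, which has to be checked alongside.

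I would establish the hat relation by induction on $N$, in parallel with the recursive definition \eqref{Xdef}--\eqref{adef}. The base case $N=1$ is immediate: the types are $\{1,\infty\}$, every weight $\operatorname{trace}(\prod X)$ equals $1$ so $\nu$ is uniform, and the single relation $L_{1,\infty}=1-q$ is realised by the scalar hats $\hX_1=1-q$, $\hX_\infty=0$. For the inductive step I would expand $X_a^{(N)}X_b^{(N)}$ using \eqref{Xdef} as a double sum $\sum_{m,m'}(a^{(N)}_{a,m}a^{(N)}_{b,m'})\otimes(X^{(N-1)}_mX^{(N-1)}_{m'})$, apply $L_{a,b}$, and reorganise the result into hat form. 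On the first tensor slot this uses only the fundamental quadratic relations \eqref{fundamentalrelations} among $\malpha,\mdelta,\mepsilon$ (which are exactly the $q$-deformed bulk ASEP commutation relations), while on the remaining slots it uses the hat relation for $X^{(N-1)}$ supplied by the inductive hypothesis. The candidate $\hX^{(N)}_n$ is then read off as a corresponding sum of tensor products, one factor carrying the new ``level-$N$'' bookkeeping and the rest inherited from $\hX^{(N-1)}$.

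The main obstacle I anticipate is precisely this reorganisation: matching the $L_{a,b}$-image of the double sum against $\hX_aX_b-X_a\hX_b$ requires a careful case analysis over the ordering of $a,b$ and of the intermediate types $m,m'$, so that each application of \eqref{fundamentalrelations} produces exactly the telescoping term required rather than an uncontrolled remainder. A secondary technical point, since the matrices $\malpha,\mdelta,\mepsilon$ realised at \eqref{adedef} are infinite-dimensional, is to justify convergence and cyclic invariance of the infinite trace in \eqref{matrixproductform}, together with strict positivity of the weights on the relevant sector, so that the normalised measure is a genuine probability distribution and the identification with the unique stationary distribution is complete.
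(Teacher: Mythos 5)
This theorem is not proved in the paper: it is imported wholesale from Prolhac, Evans and Mallick \cite{PEM}, and the paper's own contribution begins downstream of it. There is therefore no internal proof to compare against; the relevant comparison is with the argument of \cite{PEM} itself, and your outline is essentially that argument --- the standard cancellation mechanism of the matrix product ansatz, with auxiliary ``hat'' matrices producing a telescoping sum over the edges of the ring. Your computation of the local two-site matrix $L_{a,b}$ is correct for the convention used here (neighbouring values sort into increasing order at rate $1$ and into decreasing order at rate $q$), and your observation that the hat relation forces the compatibility condition $\hX_a X_b+\hX_b X_a=X_a\hX_b+X_b\hX_a$ is also right.

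The gap is that the decisive step --- exhibiting the matrices $\hX^{(N)}_n$ and verifying $L_{a,b}=\hX_a X_b-X_a\hX_b$ for the recursively defined $X^{(N)}_n$ of (\ref{Xdef})--(\ref{adef}) --- is only announced, not carried out, and that verification is the entire mathematical content of the theorem. One must guess the correct tensor structure of the hat operators (in \cite{PEM} they carry a recursive layering of their own, and the reduction relations mix the level-$N$ slot with the lower levels rather than following from (\ref{fundamentalrelations}) applied slot by slot), and the case analysis over the orderings of $a$, $b$ and the intermediate indices $m$, $m'$ is where all the difficulty lives; as written, ``reorganise the result into hat form'' restates the problem rather than solving it. Two secondary points you flag but do not resolve also matter: with the infinite-dimensional realisation (\ref{adedef}) one must check that $\operatorname{trace}\left(X_{\eta_1}\cdots X_{\eta_L}\right)$ is finite (this holds because, in the fixed-particle-count sector, the raising and lowering operators acting on each tensor slot balance around the ring, so only finitely many matrix elements contribute), and one must check that the weights are not identically zero on that sector, since otherwise $Z^{(N,L)}_{k_1,\dots,k_N}$ is undefined. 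If you intend to supply a proof rather than cite \cite{PEM}, the hat-matrix construction must be made explicit.
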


Matrices satisfying the relations (\ref{fundamentalrelations})
can be realised in many ways. We give a particular version with direct links to 
the queueing interpretations that we will develop:
\begin{gather}
\nonumber
\malpha=
\begin{pmatrix} 
1 & 0 & 0 & 0 & \ldots \\
0 & q & 0 & 0 &\ldots \\
0 & 0 & q^2 & 0 &\ldots \\
0 & 0 & 0 & q^3 & \ldots \\
\vdots & \vdots & \vdots & \vdots & \ddots 
\end{pmatrix}
,
\,\,\, 
\mepsilon=\begin{pmatrix} 
0 & 1 & 0 & 0 & \ldots \\
0 & 0 & 1 & 0 & \ldots \\
0 & 0 & 0 & 1 & \ldots \\
0 & 0 & 0 & 0 & \ldots \\
\vdots & \vdots & \vdots & \vdots & \ddots 
\end{pmatrix},\\ 
\label{adedef}
\mdelta=\begin{pmatrix} 
0 & 0 & 0 & 0 &\ldots \\
1-q & 0 & 0 & 0 &\ldots \\
0 & 1-q^2 & 0 & 0 &\ldots \\
0 & 0 & 1-q^3 & 0 & \ldots \\
\vdots & \vdots & \vdots & \vdots & \ddots 
\end{pmatrix}.
\end{gather}

Note immediately that $\mepsilon$ is a stochastic matrix
(that is, a matrix whose entries are non-negative and whose
row-sums are all equal to $1$), 
and also $\mdelta+\malpha$ is
also a stochastic matrix. Of course, so is 
the identity matrix $\mI$.  
The rows and columns of all these matrices are considered to be indexed by $\ZZ_+=\{0,1,2,\dots\}$. 
The indices can be seen as queue lengths, and the matrices are interpreted as transition matrices in a queueing process evolving over time.

Each matrix $a^{(N)}_{m,j}$ is a tensor product of $N-1$ of the fundamental matrices $\malpha$, $\mdelta$, $\mepsilon$, $\mI$, and can be seen as indexed
by vectors in $\ZZ_+^{N-1}$, representing
queue-lengths of customers of types $1,2,\dots, N-1$
in an $(N-1)$-type queueing process. 
The quantity $a^{(N)}_{m,n}$ will represent 
the weight of a transition associated with the 
arrival of a customer of type $m$ and departure of a customer of type $n$ (with $\infty$ representing no customer). 

Since the tensor products $a^{(N)}_{m,n}$ have order $N-1$, 
it follows from (\ref{Xdef}) that each matrix $X^{(N)}_n$ is 
a sum of tensor products of order $1+2+\dots+(N-1)=N(N-1)/2$.
In fact, the non-zero contributions to $X^{(N)}_n$ 
are all terms of the form
\[
a^{(1)}_{m_1, m_2}\otimes a^{(2)}_{m_2, m_3}\otimes
\dots\otimes a^{(N-1)}_{m_{N-2}, m_{N-1}}
\otimes a^{(N)}_{m_{N-1}, n},
\]
where $m_r\in\{1,\dots, r, \infty\}$ for $r=1,\dots, N-1$.
This quantity represents the weight of a transition
in a system of $N$ queues in series, associated to 
the arrival of a customer of type $m_1$ in the first queue,
a transfer of a customer of type $m_r$ from queue $r-1$
to queue $r$ for $2\leq r\leq N-1$, and a departure
of a customer of type $n$ from queue $N-1$
(and the value $\infty$ indicates the absence of a customer).

\section{Queueing construction}
\label{sec:queueing}
\subsection{The multi-type ASEP on $\ZZ$}
\label{sec:Zdefinition}
The multi-type ASEP on the whole line $\ZZ$ is defined
analogously to the process on the ring $\ZZ_L$. The $N$-type system is a continuous-time Markov process
with state-space $\{1,2,\dots,N,\infty\}^{\ZZ}$. 
For a configuration $\eta=(\eta_i, i\in \ZZ)$, say
that $\eta_i$ is the type of the particle at site $i$.
The dynamics are as follows: if $\eta(i)>\eta(i+1)$, 
then the values $\eta(i)$ and $\eta(i+1)$ are
exchanged at rate $1$. If instead $\eta(i)<\eta(i+1)$, 
then the values $\eta(i)$ and $\eta(i+1)$ are exchanged at rate $q$.

In the case $N=1$, the ergodic translation-invariant stationary distributions
are all Bernoulli product measures, in which each site contains 
a particle (type $1$) with probability $\lambda$, and otherwise a hole (where $\lambda\in[0,1]$). (There are also 
non-translation-invariant stationary distributions, 
the \textit{blocking measures} considered for example by 
\cite{BraLigMou}; these are concentrated on configurations
with only finitely many holes to the left of the origin and only finitely many particles to the right of the origin.)

As shown in \cite{FerrariKipnisSaada}, for given  
$\lambda_1, \dots, \lambda_N$ with $\lambda_1+\dots+\lambda_N<1$,
there is a unique ergodic translation-invariant stationary 
distribution for the $N$-type ASEP on $\Z$ in which the intensity 
of type-$n$ particles is $\lambda_n$ for $1\leq n\leq N$. 
We denote it by $\nu^{(N)}_{\lambda_1, \dots, \lambda_N}$. 

For each $n=1,\dots, N$, we can consider a projection
under which types $r\leq n$ are considered ``particles" and types
$r>n$ are considered ``holes". Under this projection, the $N$-type
ASEP becomes a one-type ASEP, and so in particular the image
of $\nu^{(N)}_{\lambda_1,\dots, \lambda_N}$ is Bernoulli 
product measure with intensity $\lambda_1+\dots+\lambda_r$. 
However, although all these projections are product measures,
$\nu^{(N)}_{\lambda_1,
\dots, \lambda_N}$ is not itself a product measure!
In the case $q=0$, this stationary measure on $\ZZ$ 
was constructed in \cite{FerMarmulti}.

\subsection{Single-type queue}\label{sec:basicqueue}
We now define the basic model of a discrete-time queue 
including rejected services, which will be used to 
describe the stationary distribution of the two-type ASEP on $\ZZ$. 
In later sections we consider systems consisting of several such queues 
in series, with multiple types of customer -- these will be used 
to describe multi-type stationary distributions.  We then
describe analogous systems where the ``time" index is cyclic, 
in order to describe the stationary distributions of systems on the ring $\ZZ_L$. 

The queue is Markovian. Write $A_i=1$ if there is an arrival at time-step $i$,
and $A_i=\infty$ otherwise. Write $S_i=1$ if there is a service available at 
time-step $i$, and $S_i=\infty$ otherwise. 
The processes of arrivals and of services are
independent Bernoulli processes, with rates $\lambda$ and $\mu$ respectively.
That is, at each time-step, an arrival occurs with probability
$\lambda$, and then independently a service is available with probability $\mu$
(with independence between different time-steps). 

Suppose the queue-length at the start of the time-step is $k$. There
are four possibilities:
\begin{itemize}
\item No arrival occurs, no service available, 
with probability $(1-\lambda)(1-\mu)$. The queue-length remains $k$.
\item Arrival occurs and service available, with probability $\lambda\mu$. A departure occurs, and the queue-length remains $k$.
\item Arrival occurs, no service available, 
with probability $\lambda(1-\mu)$. The queue-length increases to $k+1$.
\item No arrival occurs, service is available, with probability $(1-\lambda)\mu$. 
With probability $1-q^k$, a departure occurs, and the queue-length
goes down to $k-1$. With probability $q^k$, an \textit{unused service} occurs, 
and the queue-length remains $k$. 
\end{itemize}

Note that an unused service is allowed to occur only if no arrival has occurred
at that time-slot. We can imagine the service mechanics as follows.
When a service is available, it is offered to each customer in turn.
Each one in turn accepts it with probability $1-q$ and rejects it with probability $q$, 
except that a customer who has just arrived must always accept.  As soon as a customer accepts the service, that customer departs and we stop. 
If all $k$ of the customers reject the service 
(with probability $q^k$ if no arrival has occurred) 
then it remains unused.

The transition matrix 
of the queue-length process is given by
\begin{equation}\label{Pdef}
(1-\lambda)(1-\mu)\mI + \lambda\mu\mI + \lambda(1-\mu)\mepsilon + (1-\lambda)\mu\big(\mdelta+\malpha)
\end{equation}
where $\malpha$, $\mdelta$, $\mepsilon$ are given in (\ref{adedef}). 
The four terms in (\ref{Pdef}) correspond to the four possibilities for
the evolution of a time-step listed above.
The term in $\mdelta$ corresponds to transitions in which a departure but no arrival occurs.
$\malpha$ corresponds to transitions in which no arrival occurs and a service is offered but unused.
$\mepsilon$ corresponds to transitions in which arrival occurs but no service is offered.
Finally, the terms with $\mI$ correspond to transitions where either there is arrival and service,
or there is no arrival and no service is offered. 

For stability we assume $\lambda<\mu$. In that case, 
the queue-length process is positive recurrent, and
there is a unique equilibrium version. 

Note that at each time $i\in\ZZ$, one of three possibilities occurs:
a departure, an unused service, or no available service. 
As at (\ref{2statring}),
we define a departure process $D=(D_i, i\in\ZZ)$ by
\begin{equation}\label{2stat}
D_i =\begin{cases}
1&\text{if a departure occurs at time $i$}\\
2&\text{if an unused service occurs at time $i$}\\
\infty&\text{if no service is available at time $i$}
\end{cases}.
\end{equation} 
(Note that $D_i=\infty$ if and only if $S_i=\infty$.)
Let $Q_i$ be the number of customers in the queue at the beginning of time-step $i$. 
Then we have the following recursive formula:
\begin{equation}\label{qrec}
Q_{i+1}=Q_i + \ind(A_i=1) - \ind(D_i=1).
\end{equation}


\begin{theorem}\label{thm:2statline}
Consider the queue run in equilibrium, with departure process $D$
defined by (\ref{2stat}).
The configuration $D_i, i\in\ZZ$
is distributed according to $\nu^{(2)}_{\lambda, \mu-\lambda}$
(the unique ergodic stationary distribution of the two-type ASEP
on $\ZZ$
with parameter $q$ and with density $\lambda$ of first-class
particles and $\mu-\lambda$ of second-class particles).
\end{theorem}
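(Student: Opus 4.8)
The plan is to verify that the departure process $D$ produced by the equilibrium queue has exactly the matrix-product form which, by Theorem \ref{PEMtheorem}, characterises $\nu^{(2)}_{\lambda,\mu-\lambda}$. The key observation is that the queue's one-step dynamics are governed by the transition matrix \reff{Pdef}, and that the four matrices $\mepsilon$, $\mdelta$, $\malpha$, $\mI$ appearing there are precisely the matrices $a^{(2)}_{m,n}$ of \reff{adef} that record the \emph{joint} event of an arrival type $m\in\{1,\infty\}$ and a departure type $n\in\{1,2,\infty\}$ in a one-type queue. Concretely, I would first decompose the one-step transition matrix, tracking not just the change in queue-length but the pair (arrival symbol, output symbol $D_i$): the term $\lambda(1-\mu)\mepsilon$ corresponds to $(A_i=1, D_i=\infty)$; the term $(1-\lambda)\mu\mdelta$ to $(A_i=\infty, D_i=1)$ (a genuine departure, weight $(1-q^k)$ built into $\mdelta$); the term $(1-\lambda)\mu\malpha$ to $(A_i=\infty, D_i=2)$ (unused service, weight $q^k$ built into $\malpha$); and the two $\mI$-terms to the cases $(A_i=1,D_i=1)$ and $(A_i=\infty,D_i=\infty)$. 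This matches the interpretation already flagged in Section \ref{sec:matrices}, that $a^{(2)}_{m,n}$ carries the weight of an arrival of type $m$ together with a departure of type $n$.

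Next I would write the stationary probability of observing a given departure word $D_1 D_2\cdots D_L$ over a cyclic window as a trace. Running the chain in equilibrium, the probability of a cyclically-consistent sequence of arrival/output symbols is obtained by summing the appropriate product of the weight-matrices against the stationary distribution of $Q$; summing over the unobserved arrival symbols $A_i$ (which are not part of $D$) collapses the relevant $a^{(2)}_{m,n}$ into the matrix $X^{(2)}_{D_i}=\sum_{m} a^{(2)}_{m,D_i}\otimes X^{(1)}_{m}$, recovering exactly \reff{Xdef} for $N=2$ (using $X^{(1)}_m=1$). Because time is cyclic on the ring, the marginal over a window of length $L$ of the stationary process is proportional to $\trace\left(X^{(2)}_{D_1}\cdots X^{(2)}_{D_L}\right)$, which is the form \reff{matrixproductform}; the stationarity of $Q$ is what supplies the trace (cyclic) structure, and the positive-recurrence hypothesis $\lambda<\mu$ guarantees that the relevant equilibrium exists and the infinite-dimensional products are summable. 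I would take care to confirm that the densities come out right: the projection onto ``is there a customer'' (i.e.\ $D_i\ne\infty$) must have intensity $\mu$, and the projection onto type-$1$ (genuine departures) must have intensity $\lambda$, consistent with the claimed parameters $(\lambda,\mu-\lambda)$.

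The remaining point is to transfer this ring-level identity to the process on $\ZZ$. Since Theorem \ref{PEMtheorem} is stated for the ring, while Theorem \ref{thm:2statline} concerns the line, I would argue that the finite-window marginals of the equilibrium departure process on $\ZZ$ agree with the $L\to\infty$ limits of the ring marginals with densities tuned to $(\lambda,\mu-\lambda)$, and invoke the uniqueness of the ergodic translation-invariant stationary distribution with those intensities (the result of \cite{FerrariKipnisSaada} cited in Section \ref{sec:Zdefinition}). Translation invariance of $D$ follows from translation invariance of the equilibrium arrival and service processes together with the Markov evolution; identifying the intensities as above then pins down the limit as $\nu^{(2)}_{\lambda,\mu-\lambda}$.

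I expect the main obstacle to be the technical bookkeeping in matching the queue's transition structure to the matrices $a^{(2)}_{m,n}$ rigorously while working with infinite-dimensional (non-stochastic) matrices: one must justify convergence of the matrix products/traces, handle the fact that $\mdelta$ and $\malpha$ carry the state-dependent acceptance weights $1-q^k$ and $q^k$ exactly as encoded in \reff{adedef}, and confirm that summing out the hidden arrival marks reproduces \reff{Xdef} without spurious factors. The conceptual content—that the equilibrium queue is a realisation of the PEM matrix algebra—is clean; the care lies in the cyclic/trace passage on the ring and the subsequent limit to $\ZZ$, where uniqueness of the ergodic stationary measure does the essential work.
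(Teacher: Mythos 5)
Your overall architecture matches the paper's: establish the ring identity via the matrix product representation of Theorem \ref{PEMtheorem}, then pass to $\ZZ$ and invoke uniqueness of the ergodic translation-invariant stationary measure from \cite{FerrariKipnisSaada}. The identification of the four terms of \reff{Pdef} with the matrices $a^{(2)}_{m,n}$ recording (arrival type, output type) is correct and is exactly the content of Lemma \ref{lemma:matrixweights}. However, there are two genuine gaps. First, your claim that ``the stationarity of $Q$ is what supplies the trace (cyclic) structure'' is wrong. For the equilibrium queue on $\ZZ$, the probability of a departure word on a window of length $L$ has the form $\pi^{T} M_{D_1}\cdots M_{D_L}\mathbf{1}$, where $\pi$ is the stationary queue-length distribution and the $M_n$ are the sub-transition matrices; this is not a trace, and it is \emph{not} proportional to $\trace\bigl(X^{(2)}_{D_1}\cdots X^{(2)}_{D_L}\bigr)$. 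The trace arises only from genuinely cyclic time, i.e.\ from a separate ``queue on the ring'' in which $Q_{L}=Q_0$ is enforced and one sums over all cyclic queue-length sequences weighted by $w(Q|A,S)$ — and that object requires its own justification, in particular the non-obvious fact (Lemma \ref{lem:denominator2}) that the normalising constant $\sum_Q w(Q|A,S)$ depends only on the particle counts and not on $A,S$, without which the ``proportional to'' in your trace formula fails configuration by configuration. Your proposal never separates these two objects, and never addresses the normalisation.

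Second, the passage from the ring to $\ZZ$ is asserted (``I would argue that the finite-window marginals\dots agree with the $L\to\infty$ limits of the ring marginals'') but this is precisely where the paper's proof does its real work. One needs (i) a coupling, on a common source of randomness $R_i=(A_i,S_i,B_i)$, of the cyclic ring evolution with the equilibrium evolution on $\ZZ$, showing the two queue-length trajectories coalesce before entering the observation window with probability tending to $1$ (Proposition \ref{prop:coupling}, via the pair-chain Lemma \ref{lem:pair-coupling}), which is what actually proves that the window marginals of the equilibrium departure process are the limits of the ring stationary marginals; and (ii) the fact that a cylinder-limit of ring-stationary measures is stationary for the dynamics on $\ZZ$ (Proposition \ref{prop:cylinder}), which itself requires a finite-speed-of-propagation coupling of the ASEP generators on $\ZZ_{2T}$ and $\ZZ$. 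There is also the bookkeeping that the ring construction fixes particle counts while the equilibrium queue has Bernoulli arrivals and services, handled in the paper by conditioning on the event $G_T$ and observing that the resulting mixture of stationary measures is stationary. Without these ingredients the plan does not close; with them, it is the paper's proof.
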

Our proof of this result will be based on the 
matrix product representation in Theorem 
\ref{PEMtheorem}.

\subsection{Multi-type queues}
Next, we consider the extension of the queue described above 
to a multi-class queue with priorities.
The queue will contain $N-1$ classes (or types) of customer, labelled $1,2,\dots, N-1$.
The lower the number of the class, the higher the priority.

The state of the queue at a given time $i$ is now a 
vector $(Q^{(n)}_i, 1\leq n\leq N-1)$ with 
$N-1$ entries; the $n$th entry $Q^{(n)}_i$ denotes the number of customers of 
class $n$ present in the queue at the beginning of time-step $i$.

At each time-step $i$, at most one customer arrives (with some given class).
Write $A_i=n$ if a customer of type $n$ arrives, and $A_i=\infty$ if there is no 
arrival. Initially we don't specify the arrival process, but concentrate on
describing the action of the queueing server.

As before, the process of available services is a Bernoulli process of rate $\mu$;
write $S_i=1$ if a service occurs (which happens with probability $\mu$)
and $S_i=\infty$ otherwise. As above, the service will be offered in turn to each 
customer in the queue. This is now done in order of priority; 
the service is offered to each of the first-class customers, then 
to each of the second-class customers, and so on until some accepts it 
or all customers have rejected it. Each customer accepts the service 
with probability $1-q$ and rejects it with probability $q$, with the exception
that if a customer has just arrived at the queue, then that customer will always accept. 

We give two brief examples. Suppose that at the beginning of a time-slot, the 
queue contains $3$ first-class customers,
$1$ second-class customer and $4$ third-class customers. Suppose that 
there is no arrival, and a service is available. Then a departure of a first-class customer
occurs with probability $1-q^3$, a second-class departure occurs with probability
$q^3-q^4$, a third-class departure occurs with probability $q^4-q^8$ and 
an unused service occurs with probability $q^8$. 

Suppose instead that an arrival of a second-class customer occurs, increasing the number
of second-class customers to 2. Now a first-class customer departs with probability 
$1-q^3$, and a second-class customer departs with probability $q^3$. It is impossible
for a third-class departure or an unused service to occur. 

Note the particular case $q=0$. Here it is always a customer of the highest
priority-type present who departs, and a service is unused if and only if the 
queue is empty. 

As at (\ref{multistatring}), generalising (\ref{2stat}), 
define the departure process $D=(D_i, i\in\ZZ)$ by
\begin{equation}\label{multistat}
D_i =\begin{cases}
n&\text{if a departure of type $n$ occurs at time $i$, for $n\in\{1,2,\dots,N-1\}$}\\
N&\text{if an unused service occurs at time $i$}\\
\infty&\text{if no service is available at time $i$}
\end{cases}.
\end{equation}
As at (\ref{qrec}), we have a recursion for the queue-length process. For each $i$ and 
for each $r=1,2,\dots, N-1$,
\[
Q^{(n)}_{i+1}=Q_i^{(n)}+\ind(A_i=n)-\ind(D_i=n).
\]

\subsection{Multi-type stationary distribution}
We now explain how to construct stationary distributions for the multitype ASEP
on $\ZZ$ recursively. 
The departure process of a queue with $N-1$ types, whose arrival process corresponds
to the stationary distribution for the $(N-1)$-type ASEP, is used to give a stationary 
distribution for the $N$-type ASEP.

Fix $\lambda_1,\lambda_2,\dots,\lambda_N$ with $\lambda_1+\lambda_2+\dots+\lambda_N<1$.

Consider a priority queue as above with $N-1$ types, whose arrival process 
$A_i, i\in\ZZ$ is
distributed according to $\nu_{\lambda_1, \dots, \lambda_{N-1}}^{(N-1)}$, 
the ergodic stationary distribution of the $(N-1)$-type ASEP with 
density $\lambda_r$ of customers of type $r$, for $r=1,2,\dots,N-1$, and whose service
process is a Bernoulli process with rate $\lambda_1+\lambda_2+\dots+\lambda_n$
(independent of the arrival process).

\begin{theorem}\label{thm:Nstatline}
Let $D$ be the departure process of the priority queue
with $N-1$ classes,
defined at (\ref{multistat}).
The $N$-type configuration $D_i, i\in\ZZ$ 
is distributed according to $\nu_{\lambda_1, \dots, \lambda_{N}}^{(N)}$,
the unique ergodic stationary distribution of the $N$-type ASEP on $\ZZ$
with parameter $q$ and with density $\lambda_r$ of particles of class $r$,
for $r=1,2,\dots,N$.
\end{theorem}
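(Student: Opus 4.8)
The plan is to prove the result in two stages: first establish the exact analogue on the ring $\ZZ_L$, where the matrix product representation of Theorem \ref{PEMtheorem} is available, and then pass to the infinite line by a coupling and local-limit argument. The whole argument is organised as an induction on $N$; the base case $N=2$ is Theorem \ref{thm:2statline}, with its ring counterpart Theorem \ref{thm:2algo}.

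For the ring stage, I would show that the cyclic-time version of the $(N-1)$-class priority queue, fed by a ring $(N-1)$-type stationary configuration and an independent Bernoulli service stream, produces the $N$-type ring stationary distribution $\nu^{(N,L)}_{k_1,\dots,k_N}$ of \reff{matrixproductform}; this is exactly the content of Theorem \ref{thm:multialgo}. The key is to read the recursive tensor identity \reff{Xdef} probabilistically. The explicit realisations \reff{adedef} make $\mepsilon$ and $\mdelta+\malpha$ stochastic, so that each building-block matrix $a^{(N)}_{m,n}$ of \reff{adef} is literally the transition weight, on the $(N-1)$-dimensional queue-length space, of the event that a type-$m$ customer arrives and a type-$n$ customer departs under the priority-with-rejections discipline, the parameter $q$ of \reff{fundamentalrelations} encoding the per-customer rejection probability. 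Expanding $\operatorname{trace}\big(X^{(N)}_{\eta_1}\cdots X^{(N)}_{\eta_L}\big)$ along \reff{Xdef} then factors each summand into a product of these transition weights along a closed queue-length trajectory (the trace implementing the cyclic boundary condition in time), tensored against a factor that, by the inductive hypothesis and Theorem \ref{PEMtheorem}, is the weight of the arrival configuration under $\nu^{(N-1,L)}$. Matching weights and normalising constants across the two lines identifies the departure law with $\nu^{(N,L)}$, the truncated-geometric rule \reff{truncate2} arising as the normalised form of the rejection weights after the cyclic search for an unassigned service.

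To reach $\ZZ$, I would fix the densities $\lambda_1,\dots,\lambda_N$, set $k_n=\lfloor\lambda_n L\rfloor$, and let $L\to\infty$. On the arrival side the uniform choice of occupied sites converges locally to the Bernoulli fields underlying $\nu^{(N-1)}_{\lambda_1,\dots,\lambda_{N-1}}$; on the service side the truncated-geometric assignment \reff{truncate2} converges to the honest geometric rejection mechanism of the $\ZZ$-queue, because $q<1$ and the stability condition $\lambda_1+\dots+\lambda_{N-1}<\mu=\lambda_1+\dots+\lambda_N$ makes the probability that the cyclic search for a free service wraps around the ring tend to $0$. A coupling of the ring and line constructions then shows that every finite-dimensional marginal of the ring departure process converges to the corresponding marginal of the $\ZZ$ departure process $D$. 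Since a local limit of stationary distributions is stationary, the limit is a translation-invariant stationary distribution for the $N$-type ASEP on $\ZZ$ with the prescribed densities and the correct one-type projections; by the uniqueness result of \cite{FerrariKipnisSaada} it must coincide with $\nu^{(N)}_{\lambda_1,\dots,\lambda_N}$.

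The main obstacle is the infinite-volume passage. Two points require genuine care: controlling the cyclic wraparound of the service-assignment rule so that it matches the forward-in-time rejection dynamics of the equilibrium queue on $\ZZ$ (this is where the stability hypothesis $\lambda<\mu$ enters, guaranteeing that the equilibrium queue-length distribution has light enough tails for the wraparound contribution to vanish), and justifying that the local weak limit of the ring measures is genuinely stationary and ergodic so that \cite{FerrariKipnisSaada} applies. The bookkeeping in the ring stage---tracking the internal higher-line types and checking that the ratio of partition functions across consecutive lines is exactly absorbed into the conditional service-assignment probabilities---is routine but must be done carefully to confirm that no spurious normalising factor survives.
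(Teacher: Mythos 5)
Your proposal is correct and follows essentially the same route as the paper: the ring case is handled by reading the Prolhac--Evans--Mallick tensor recursion as queueing transition weights (Theorems \ref{thm:2statring}, \ref{thm:Nstatring}, \ref{thm:multialgo}), and the passage to $\ZZ$ is by showing that a local limit of ring stationary measures is stationary (Proposition \ref{prop:cylinder}) together with a coupling of the cyclic and equilibrium queue evolutions, then invoking the uniqueness result of \cite{FerrariKipnisSaada}. The only implementation differences are minor: the paper takes binomial rather than deterministic particle counts on the ring (conditioning on the event $G_T$ that the stability inequalities hold), and realises your ``wraparound vanishes'' step concretely as a coalescence argument for a pair of queue-length chains driven by the same randomness (Lemma \ref{lem:pair-coupling}, Proposition \ref{prop:coupling}).
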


Hence the stationary distribution with $N$ types can be seen as the output of a series
of $N-1$ queues in tandem. The $r$th of the $N-1$ queues contains $r$ types
of customer in its arrival process. Its departure process also contains $r$ types, 
to which we add an $(r+1)$st type corresponding to unused services. 

\subsection{``Queues'' on the ring: two-type}
To construct the stationary distribution of the 
ASEP on $\ZZ_L=\{0,1,\dots,L-1\}$ (the ring with $L$ sites) we consider ``queues''
in which the time is cyclic; i.e.\ we replace the time index $\ZZ$ by $\ZZ_L$. All addition and 
subtraction is to be understood modulo $L$ 

In this section we cover the two-type ASEP.  The numbers of first-class and
of second-class particles are conserved by the dynamics.  For each
$k_1$ and $k_2$ with $k_1+k_2<L$, the set of configurations with
$k_1$ first-class and $k_2$ second-class particles (and hence
$L-k_1-k_2$ holes) forms a single communicating class; there is a
unique stationary distribution concentrated on such a set of configurations.

The construction of the two-type stationary distribution uses a
queue with one type of customer. We will have $k_1$ arrivals and $k_1+k_2$ services. 

The set of times at which arrivals occur is chosen uniformly from all subsets
of $\ZZ_L$ of size $k_1$, and the set of times at which services are available is
chosen uniformly from all subsets of $\ZZ_L$ of size $k_1+k_2$,
independently of arrivals. Write $A_i=1$ if there is an arrival at
time $i$ and $A_i=\infty$ if not. Write $S_i=1$ if there is a service available
at time $i$ and $S_i=\infty$ if not.

Analogously to the queue described in Section \ref{sec:basicqueue}, 
we want the following rules. If no service is available, then no 
departure occurs. If an arrival and an available service both occur, 
then a departure must occur. Finally,
if a service is available but an arrival does not occur, 
there may be either a departure or an unused service. 

Formally, suppose we are given arrival and service processes
$A$ and $S$ in $\{1,\infty\}^{\ZZ_L}$, 
Then we say that a 
queue-length process $Q=(Q_i, i\in\ZZ_L)\in\ZZ_+^{\ZZ_L}$
is \textit{valid}, 
and write $(A,S,Q)\in\cR^{(2)}$, 
if there exists a departure process $D=(D_i, i\in\ZZ_L)
\in\{1,2,\infty\}^{\ZZ_L}$ satisfying the following properties:
\begin{align}
\nonumber
Q_{i+1}-Q_i &\,\,=\,\, \ind(A_i=1)-\ind(D_i=1)\\
\label{R2def}
S_i=\infty &\Longrightarrow D_i=\infty\\
\nonumber
S_i=1 &\Longrightarrow D_i<\infty \text{ and } D_i\leq A_i.
\end{align}
If such a $D$ exists it is unique.
(The first line determines when $D_i=1$ and the next two 
lines together determine when $D_i=\infty$.) 
For $(A,S,Q)\in\cR^{(2)}$, we write $D(A,S,Q)$ for the unique 
$D$ satsifying the properties in (\ref{R2def}).

Now, given $A$ and $S$, define a weight function
on valid queue-length processes as follows.
For valid $Q$,
define the 
weight $w_i(Q|A,S)$ associated to site $i\in\ZZ_L$ by
\begin{equation}\label{weightdef}
w_i(Q|A,S)=\begin{cases} 
1&\text{if } S_i =\infty, D_i=\infty, Q_{i+1}=Q_i +\ind(A_i=1) \\
1&\text{if } S_i =1, D_i=1, A_i =1, Q_{i+1}=Q_i\\
1-q^{Q_i }&\text{if } S_i =1, D_i=1, A_i =\infty, Q_{i+1}=Q_i -1\\
q^{Q_i }&\text{if } S_i =1, D_i=2, A_i =\infty, Q_{i+1}=Q_i
\end{cases},
\end{equation}
where $D=D(A,S,Q)$.
(It's straightforward to check that if the properties
in (\ref{R2def}) are satisfied, then exactly one of the 
cases in (\ref{weightdef}) occurs.)
Now define the weight of the whole process $Q$ by 
$
w(Q|A,S)=\prod_{i\in\ZZ_L} w_i(Q|A,S).
$
Given $A$ and $S$, we now take 
the probability of the queue-length process $Q$ to be proportional to $w(Q|A,S)$,
by
\begin{equation}\label{PQASdef}
P(Q|A,S)=\frac{w(Q|A,S)}{\sum_{Q'} w(Q'|A,S)}.
\end{equation}
It's easily seen that the denominator is finite as long as $q<1$. 
In fact, we will show later (Lemma \ref{lem:denominator2})
that this normalizing constant depends only on $k_1$ and $k_2$, and not on 
particular $A$ and $S$. 

The weight has the following interpretation. 
At each time slot where a service occurs,
this service is offered to the customers in turn.
Each customer accepts an offer with probability $1-q$ 
and rejects it with probability $q$, with the exception 
that any customer who has just arrived at the queue must 
accept it. The values of $w_i$ in the second, third, and fourth lines
of (\ref{weightdef}) above are the corresponding
probabilities for observing a departure or an unused service, 
taking into account the current composition of the queue and the 
information about whether an arrival has occurred.

Now we use the weight $w$ to give a distribution on departure configurations $D$; namely, choose $A$ and $S$ uniformly, as described above; choose $Q$ in proportion to $P(Q|A,S)$
(equivalently, in proportion to $w(Q|A,S)$); finally, take 
$D=D(A,S,Q)$. 

\begin{theorem}\label{thm:2statring}
The distribution of the configuration $D=D_i, i\in\ZZ_L$ 
induced by the weight (\ref{weightdef}) on queue-length processes
is the stationary distribution of the two-type ASEP
on $\ZZ_L$ with $k_1$ first-class and $k_2$ second-class 
particles.
\end{theorem}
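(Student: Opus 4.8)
The plan is to identify the distribution produced by the queueing construction with the matrix-product form of Theorem~\ref{PEMtheorem} specialised to $N=2$. Concretely, I will show that the probability that the construction outputs a given configuration $\eta\in\{1,2,\infty\}^{\ZZ_L}$ (with $k_1$ ones and $k_2$ twos) is proportional to $\trace\bigl(X^{(2)}_{\eta_1}X^{(2)}_{\eta_2}\cdots X^{(2)}_{\eta_L}\bigr)$. Since both this and the queueing distribution are probability measures supported on configurations with the prescribed particle counts, and a probability measure proportional to a fixed nonnegative function is unique, the two must coincide, which is the assertion of the theorem. The first step is to specialise the recursion (\ref{Xdef})--(\ref{adef}) to $N=2$. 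As $X^{(1)}_1=X^{(1)}_\infty=1$, each $X^{(2)}_j$ is a sum of two of the matrices in (\ref{adef}) (those indexed by incoming type $m\in\{1,\infty\}$), and with the explicit realisations (\ref{adedef}) this gives
\[
X^{(2)}_1=\mI+\mdelta,\qquad X^{(2)}_2=\malpha,\qquad X^{(2)}_\infty=\mI+\mepsilon .
\]

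The crux is a bijective expansion of the trace. Writing each $X^{(2)}_{\eta_i}$ as a sum of fundamental matrices (two choices when $\eta_i\in\{1,\infty\}$, one when $\eta_i=2$) and expanding, $\trace\bigl(\prod_i X^{(2)}_{\eta_i}\bigr)$ becomes a sum, over a choice of one fundamental matrix $M_i\in\{\mI,\mdelta,\malpha,\mepsilon\}$ at each site (compatible with $\eta_i$) together with a cyclic index sequence $(q_i)_{i\in\ZZ_L}$, of the product $\prod_i (M_i)_{q_i,q_{i+1}}$. I would match each nonzero term to a queueing realisation by setting $Q_i=q_i$, $D_i=\eta_i$, taking $S_i=1$ exactly when $\eta_i\neq\infty$, and reading off the arrival $A_i$ from the chosen matrix ($\mepsilon$, and the $\mI$ inside $X^{(2)}_1$, signify an arrival; $\mdelta$, $\malpha$, and the $\mI$ inside $X^{(2)}_\infty$ signify no arrival). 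Comparing entries of (\ref{adedef}) with the four cases of (\ref{weightdef}) shows that each factor $(M_i)_{q_i,q_{i+1}}$ equals exactly the site-weight $w_i(Q\mid A,S)$, and that the increments $q_{i+1}-q_i$ realise the recursion in (\ref{R2def}). This is therefore a weight-preserving bijection between the nonzero terms of the trace and the valid pairs $(A,Q)$ with $D(A,S,Q)=\eta$, where the service set $S=S(\eta):=\{i:\eta_i\neq\infty\}$ is forced, so that
\[
\trace\Bigl(\textstyle\prod_{i\in\ZZ_L}X^{(2)}_{\eta_i}\Bigr)=\sum_{(A,Q):\,D(A,S(\eta),Q)=\eta} w\bigl(Q\mid A,S(\eta)\bigr).
\]
(That such $A$ automatically have exactly $k_1$ arrivals follows from cyclic balance of $Q$: the up-steps $\mepsilon$ and down-steps $\mdelta$ must match in number, and the type-$1$ departures number $k_1$.)

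Finally I would assemble the construction's output probability. Since $D=\eta$ forces the service set $S(\eta)$, only that $S$ contributes to the construction, and
\[
\PP(D=\eta)=\frac{1}{\binom{L}{k_1}\binom{L}{k_1+k_2}}\sum_{A}\frac{1}{Z(A,S(\eta))}\sum_{Q:\,D=\eta} w\bigl(Q\mid A,S(\eta)\bigr),
\]
where $Z(A,S)=\sum_{Q'}w(Q'\mid A,S)$. The one genuinely nontrivial ingredient is that $Z(A,S)$ depends only on $k_1,k_2$ and not on the particular $A,S$; this is precisely Lemma~\ref{lem:denominator2}. Granting it, the factor $1/Z$ comes out of the sum, and combining with the displayed trace identity yields $\PP(D=\eta)\propto\trace(\prod_i X^{(2)}_{\eta_i})$, which completes the argument via Theorem~\ref{PEMtheorem}. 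I expect the proof of this $Z$-invariance to be the main obstacle: the transfer matrix at a site is $\mI$, $\mepsilon$, or $\mdelta+\malpha$ according to $(A_i,S_i)$, so $Z(A,S)=\trace(\prod_i T_i)$ is a trace of a product of stochastic matrices whose multiset even changes with $|A\cap S|$; showing its invariance requires the quadratic algebra (\ref{fundamentalrelations}) — for instance the consequence $\mepsilon(\mdelta+\malpha)=q(\mdelta+\malpha)\mepsilon+(1-q)\mI$ — to move arrivals past services while preserving the trace.
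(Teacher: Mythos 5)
Your proposal is correct and follows essentially the same route as the paper: identify each site-weight $w_i(Q\mid A,S)$ with the matrix entry $\bigl(X^{(2)}_{D_i}\bigr)_{Q_i,Q_{i+1}}$ via a weight-preserving bijection (the paper's Lemma~\ref{lemma:matrixweights}), sum over $Q$ to recover the trace, and invoke the $(A,S)$-independence of the normalising constant (Lemma~\ref{lem:denominator2}) to conclude via Theorem~\ref{PEMtheorem}. The only divergence is your closing speculation that Lemma~\ref{lem:denominator2} would be proved by commuting transfer matrices through the quadratic algebra; the paper instead proves it probabilistically, via geometric ``marks'' and the minimal compatible queue-length process $Q_{\min}$, but since both you and the paper take that lemma as given at this point, this does not affect the proof of the theorem itself.
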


\subsection{``Queues'' on the ring: multi-type}
In this section we construct the stationary distribution for 
the ASEP on $\ZZ_L$ with several types of particle,
generalising the result of the previous section for 2-type systems.

The construction is done recursively.
To describe the $N$-type equilibrium, we use the $(N-1)$-type 
equilibrium and a priority ``queue'' (with cyclic time as in the previous
section). 

The arrival process contains $N-1$ types of particle, and holes. 
Let $k_n$ be the number of particles of type $n$. 
Write $A_i=n$ if there is an arrival of type $n$ at time $i$,
and $A_i=\infty$ if there is no arrival. Write $S_i =1$ if there is a service
available at time $i$ and otherwise $S_i =\infty$. 

We choose the process $A$ according to the stationary distribution of an $(N-1)$-type
system with $k_n$ particles of type $n$ for $1\leq n\leq N-1$. 
Independently of the arrivals, the times of potential services
are chosen uniformly from all subsets of $\ZZ_L$ of size $k_1+k_2+\dots+k_N$. 

We now consider queue-length processes $Q^{(n)}_i$, $1\leq n\leq N-1$, $i\in\ZZ_L$
which are consistent with a given configuration of arrivals and services. 
The value $Q^{(n)}_i$ represents the number of customers of type $n$
in the queue at the beginning of time-slot $i$. 

We want the queue to obey the following rules. If no service occurs, 
then no departure can occur. If an arrival and a service both 
occur, then a departure must also occur, and the 
departing customer must have type no larger than that of the arrival.
If a service occurs but an arrival does not occur, there may be either
a departure or an unused service.

Formally, suppose we are given arrival and service processes
$A\in\{1,2,\dots,N-1,\infty\}^{\ZZ_L}$ and $S$ in $\{1,\infty\}^{\ZZ_L}$, 
Then we say that a 
queue-length process $Q=(Q_i^{(n)}, i\in\ZZ_L, 1\leq n\leq N-1)
$
is \textit{valid}, 
and write $(A,S,Q)\in\cR^{(N)}$, 
if there exists a departure process $D=(D_i, i\in\ZZ_L)
\in\{1,2,\dots,N,\infty\}^{\ZZ_L}$ satisfying the following properties:
\begin{align}
\nonumber
Q^{(n)}_{i+1}-Q^{(n)}_i &\,\,=\,\, \ind(A_i=n)-\ind(D_i=n)
\text{ for all } n\in\{1,2,\dots,N-1\}
\\
\label{RNdef}
S_i=\infty &\Longrightarrow D_i=\infty\\
\nonumber
S_i=1 &\Longrightarrow D_i<\infty \text{ and } D_i\leq A_i.
\end{align}
If such a $D$ exists it is unique.
(The first line determines when $D_i=n$ for each $n=1,2,\dots,N-1$ 
and the next two 
lines together determine when $D_i=\infty$.) 
For $(A,S,Q)\in\cR^{(N)}$, we write $D(A,S,Q)$ for the unique 
$D$ satsifying the properties in (\ref{RNdef}).

Now, given $A$ and $S$, define a weight function
on valid queue-length processes as follows.

For $0\leq n\leq N-1$, we write 
$Q^{(\leq n)}(i)=\sum_{r=1}^n Q^{(r)}(i)$, for the number of customers of type $n$ or below in the queue at the beginning 
of time-step $i$. (Vacuously $Q^{(\leq 0)}(i)=0$ for all $i$). 
Write also $e_r=(e_r^{(1)}, \dots, e_r^{(N-1)})$
for the $r$th basis vector, with $e_r^{(n)}=\ind(n=r)$. 

Then for valid $Q$, define the 
weight $w_i(Q|A,S)$ associated to site $i\in\ZZ_L$ by
\begin{equation}\label{multiweightdef}
w_i(Q|A, S)=\begin{cases} 
1&\text{if } S_i =\infty, D_i=\infty, 
A_i=\infty, Q_{i+1}=Q_i\\
1&\text{if } S_i =\infty, D_i=\infty, 
\\
&\phantom{\text{if}  S_i}
A_i<\infty, Q_{i+1}=Q_i+e_{A_i}\\
q^{Q^{(\leq n-1)}(i)}(1-q^{Q^{(n)}(i)})
&\text{if } S_i=1, D_i=n, A_i=\infty, Q_{i+1}=Q_i-e_n \\
q^{Q^{(\leq n-1)}(i)}(1-q^{Q^{(n)}(i)})
&\text{if } S_i=1, D_i=n, 
\\
&\phantom{\text{if}  S_i}
n<A_i<\infty, Q_{i+1}=Q_i-e_n+e_{A_i} \\
q^{Q^{(\leq n-1)}(i)}
&\text{if } S_i=1, D_i=n, A_i=n, Q_{i+1}=Q_i\\
q^{Q^{(\leq N-1)}(i)}
&\text{if } S_i=1, D_i=N, A_i=\infty, Q_{i+1}=Q_i
\end{cases},
\end{equation}
where $D=D(A,S,Q)$.
(Again, one can verify that if the properties
in (\ref{RNdef}) are satisfied, then exactly one of the 
cases in (\ref{multiweightdef}) occurs.)
Now define the weight of the whole process $Q$ by 
$
w(Q|A,S)=\prod_{i\in\ZZ_L} w_i(Q|A,S).
$

Given $A$ and $S$, the probability 
$P(Q|A,S)$ of the queue-length process $Q$ is now taken to be
proportional to $w(Q|A,S)$, just as at (\ref{PQASdef}). 
(We will show in Lemma \ref{lem:denominatorN}
that the denominator $\sum w(Q|A,S)$ is again finite, and indeed depends on $A$ and $S$ only through  
the particle counts $k_1, \dots, k_N$.)

The weight has the following interpretation. 
At each time slot where a service occurs,
this service is offered to each of the customers, in order of priority
(starting with those of type 1, then those of type 2, and so on). 
Each customer accepts an offer with probability $1-q$ 
and rejects it with probability $q$, with the exception 
that any customer who has just arrived at the queue must 
accept it. The values of $w_i$ above are the corresponding
probabilities for observing a particular type of departure or unused service, taking into account the current composition of the queue and the type of arrival (if any).

Then the weight $w$ yields a distribution on departure configurations
$D$; namely, choose $A$ from the $(N-1)$-type stationary distribution,
and independently choose $S$ uniformly, as described above; 
choose $Q$ in proportion to $P(Q|A,S)$
(equivalently, in proportion to $w(Q|A,S)$; finally, take 
$D=D(A,S,Q)$.

\begin{theorem}\label{thm:Nstatring}
The distribution of the configuration $D=D_i, i\in\ZZ_L$ 
induced by the weight (\ref{multiweightdef}) on queue-length processes
is the stationary distribution of the $N$-type ASEP
on $\ZZ_L$ with particle counts $k_1,\dots, k_N$. 
\end{theorem}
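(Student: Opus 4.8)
The plan is to deduce Theorem \ref{thm:Nstatring} directly from the matrix product representation of Theorem \ref{PEMtheorem}, matching the expansion of the trace against the recursive queueing construction. By Theorem \ref{PEMtheorem} applied to the $N$-type system, $\nu^{(N)}(D)$ is proportional to $\operatorname{trace}(X^{(N)}_{D_1}\cdots X^{(N)}_{D_L})$. First I would substitute the recursive definition \reff{Xdef} into each factor, writing $X^{(N)}_{n}=\sum_{m}a^{(N)}_{m,n}\otimes X^{(N-1)}_{m}$ as a sum over the type $m$ of the customer fed into the final queue, with the matrices $a^{(N)}_{m,n}$ of \reff{adef}. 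Using $(A\otimes B)(A'\otimes B')=(AA')\otimes(BB')$ and $\operatorname{trace}(A\otimes B)=\operatorname{trace}(A)\operatorname{trace}(B)$, the trace factorises as
\begin{equation*}
\operatorname{trace}\big(X^{(N)}_{D_1}\cdots X^{(N)}_{D_L}\big)
=\sum_{A}
\operatorname{trace}\big(a^{(N)}_{A_1,D_1}\cdots a^{(N)}_{A_L,D_L}\big)\,
\operatorname{trace}\big(X^{(N-1)}_{A_1}\cdots X^{(N-1)}_{A_L}\big),
\end{equation*}
where $A=(A_i,i\in\ZZ_L)$ runs over $\{1,\dots,N-1,\infty\}^{\ZZ_L}$. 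The first trace vanishes unless $A$ has particle counts $k_1,\dots,k_{N-1}$ (conservation around the ring forces $\#\{i:A_i=n\}=\#\{i:D_i=n\}=k_n$), so $A$ is automatically confined to the correct sector; there the second trace equals $Z^{(N-1)}\nu^{(N-1)}(A)$ by Theorem \ref{PEMtheorem} for $N-1$ types, and $A$ plays exactly the role of the arrival process drawn from the $(N-1)$-type stationary distribution.

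The core of the argument is then to identify the first trace with the total queue weight. Expanding $\operatorname{trace}(a^{(N)}_{A_1,D_1}\cdots a^{(N)}_{A_L,D_L})$ over the queue-length state space $\ZZ_+^{N-1}$ gives a sum over cyclic sequences $Q=(Q_i,i\in\ZZ_L)$, each contributing $\prod_{i}\big(a^{(N)}_{A_i,D_i}\big)_{Q_i,Q_{i+1}}$. I would check, case by case over the six possibilities in \reff{multiweightdef}, that with the specific matrices \reff{adedef} the entry $\big(a^{(N)}_{A_i,D_i}\big)_{Q_i,Q_{i+1}}$ equals the local weight $w_i(Q|A,S)$ whenever the increment $Q_i\to Q_{i+1}$ is consistent with $(A_i,D_i)$, and vanishes otherwise, where $S$ is read off from $D$ via $S_i=1\iff D_i\neq\infty$. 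For instance, the block $\malpha^{\otimes(n-1)}$ produces $q^{Q^{(\leq n-1)}(i)}$ (the higher-priority customers rejecting), a $\mdelta$ in slot $n$ produces $1-q^{Q^{(n)}(i)}$ together with a decrement of $Q^{(n)}$, an $\mepsilon$ produces an increment of weight $1$, the case $a^{(N)}_{n,n}=\malpha^{\otimes(n-1)}\otimes\mI^{\otimes(N-n)}$ gives $q^{Q^{(\leq n-1)}(i)}$ with \emph{no} $(1-q^{Q^{(n)}(i)})$ factor (the just-arrived customer always accepts), and the vanishing entries $a^{(N)}_{m,n}=0$ for $m<n<\infty$ encode exactly the constraint $D_i\leq A_i$ in \reff{RNdef}. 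Consequently the first trace equals $\sum_{Q}w(Q|A,S)\,\ind(D(A,S,Q)=D)$, the total weight of all valid cyclic queue-length processes producing output $D$ from arrivals $A$ and services $S$.

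Combining the two identifications yields
\begin{equation*}
Z^{(N)}\nu^{(N)}(D)=Z^{(N-1)}\sum_{A}\nu^{(N-1)}(A)\sum_{Q}w(Q|A,S)\,\ind\!\big(D(A,S,Q)=D\big),\qquad S=S(D).
\end{equation*}
On the other hand, in the construction of Theorem \ref{thm:Nstatring} the output equals a prescribed $D$ only when the uniform service set coincides with $S(D)$, which carries probability $1/\binom{L}{K}$ since $D$ has exactly $K=k_1+\dots+k_N$ non-hole sites; hence
\begin{equation*}
\P(D)=\frac{1}{\binom{L}{K}}\sum_{A}\nu^{(N-1)}(A)\,\frac{\sum_{Q}w(Q|A,S)\,\ind(D(A,S,Q)=D)}{\sum_{Q'}w(Q'|A,S)},\qquad S=S(D).
\end{equation*}
Since the inner normaliser $\sum_{Q'}w(Q'|A,S)$ is a constant $Z_K$ depending only on $k_1,\dots,k_N$ (Lemma \ref{lem:denominatorN}), the right-hand side is proportional to the sum in the previous display, so $\P(D)$ and $\nu^{(N)}(D)$ agree up to a $D$-independent constant; as both are probability measures on configurations with particle counts $k_1,\dots,k_N$, they coincide.

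I expect the main obstacle to be the bookkeeping in the case analysis of the second paragraph: one must verify simultaneously that the support of the matrix entries reproduces the validity constraints \reff{RNdef} (in particular that $S$ is forced to equal $S(D)$ and that $D_i\leq A_i$), and that on this support the numerical weights match \reff{multiweightdef} exactly, including the subtle boundary between the ``arrival of the same type departs'' case and the genuine-departure cases. A secondary point to be handled carefully is making the correspondence between cyclic matrix traces and cyclic queue-length processes rigorous in the infinite-dimensional setting, i.e.\ the finiteness of the relevant traces for $q<1$, although this is already implicit in Theorem \ref{PEMtheorem}.
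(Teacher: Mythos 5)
Your proposal is correct and follows essentially the same route as the paper's proof: it combines the matrix-entry/queue-weight identification (the paper's Lemma \ref{lemma:multimatrixweights}), the $A,S$-independence of the normaliser $\sum_{Q'}w(Q'|A,S)$ (Lemma \ref{lem:denominatorN}), and the tensor-product trace factorisation to match the expansion of $\trace(X^{(N)}_{D_1}\cdots X^{(N)}_{D_L})$ against the recursive queueing construction. The only cosmetic difference is that you run the chain of equalities from the matrix-product side towards the construction, whereas the paper runs it in the opposite direction.
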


\section{Proofs of main results for systems on $\ZZ_L$}
\label{sec:mainproofs}
In this section, we prove Theorem \ref{thm:2statring}
and Theorem \ref{thm:Nstatring} concerning the construction
of the ASEP on $\ZZ_L$ (in the $2$-type and general $N$-type cases 
respectively), and then Theorem \ref{thm:2algo} and
Theorem \ref{thm:multialgo} justifying Algorithms
\ref{2algo} and \ref{multialgo}.

\subsection{Proofs of Theorem \ref{thm:2statring} and 
Theorem \ref{thm:Nstatring}}
\label{sec:ringproofs}
We start with the 2-type case of Theorem \ref{thm:2statring}.
We explain this case somewhat thoroughly, 
so as to indicate as clearly as possible the extension of 
the argument to $N$ types, where the notation is more complicated
and fewer details will be included. 
%
The form of the matrix product solution in the case $N=2$ is 
particularly simple. From (\ref{Xdef}) and (\ref{adef}), we obtain
\begin{align}
\nonumber
X_1^{(2)}&=I+\mdelta,\\
\label{twotypematrices}
X_2^{(2)}&=\malpha,\\
\nonumber
X_\infty^{(2)}&=I+\mepsilon.
\end{align}
We work specifically with the forms of $\malpha$, $\mepsilon$ and $\mdelta$ 
given at (\ref{adedef}). 

\begin{lemma}\label{lemma:matrixweights}
$ $
\begin{itemize}
\item[(i)]
Suppose $(A,S,Q)\in\cR^{(2)}$, 
and let $D=D(A,S,Q)$.
Then for all $i$,
\[
w_i(Q|A,S)=\left(X_{D_i}^{(2)}\right)_{Q_i, Q_{i+1}}.
\]
\item[(ii)]
Given $Q$ and $D$, if
$\prod_i \left(X_{D_i}^{(2)}\right)_{Q_i, Q_{i+1}}>0$,
then there exists a unique pair $A$, $S$ such 
that $(A,S,Q)\in\cR^{(2)}$ and $D=D(A,S,Q)$. 
\end{itemize}
\end{lemma}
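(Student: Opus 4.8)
The plan is to prove (i) by a direct term-by-term comparison of matrix entries with the four cases of (\ref{weightdef}), and (ii) by explicitly inverting the map $(A,S,Q)\mapsto D$, using positivity of the matrix product to guarantee that the inversion is well defined.

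For part (i), I would read off the nonzero entries of the three matrices from (\ref{adedef}): the only nonzero entries are $(\mI+\mepsilon)_{k,k}=1$ and $(\mI+\mepsilon)_{k,k+1}=1$; $(\malpha)_{k,k}=q^k$; and $(\mI+\mdelta)_{k,k}=1$ and $(\mI+\mdelta)_{k,k-1}=1-q^k$. Matching these against (\ref{weightdef}): when $D_i=\infty$ (so $S_i=\infty$) one has $Q_{i+1}=Q_i$ if $A_i=\infty$ and $Q_{i+1}=Q_i+1$ if $A_i=1$, and both entries of $X_\infty^{(2)}$ equal to $1$ reproduce the first line; when $D_i=1$ the entry $1$ (at $Q_{i+1}=Q_i$, i.e. $A_i=1$) and the entry $1-q^{Q_i}$ (at $Q_{i+1}=Q_i-1$, i.e. $A_i=\infty$) of $X_1^{(2)}$ reproduce the second and third lines; and when $D_i=2$ the entry $q^{Q_i}$ of $X_2^{(2)}$ reproduces the fourth line. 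This is routine once the entries are written out.

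For part (ii), the key observation is that both $S$ and $A$ are recovered deterministically from $(Q,D)$. Since $D_i=\infty$ iff $S_i=\infty$, I set $S_i=\infty$ when $D_i=\infty$ and $S_i=1$ otherwise. The queue-length recursion in (\ref{R2def}) forces $\ind(A_i=1)=(Q_{i+1}-Q_i)+\ind(D_i=1)$, so I define $A_i=1$ when the right side equals $1$ and $A_i=\infty$ otherwise. The hypothesis $\prod_i(X_{D_i}^{(2)})_{Q_i,Q_{i+1}}>0$ is exactly what makes this sensible: positivity of the factor at site $i$ restricts $(Q_i,Q_{i+1})$ to a transition compatible with $D_i$ (for instance $D_i=1$ with $Q_{i+1}=Q_i-1$ requires $Q_i\geq1$, since $1-q^{Q_i}>0$; and $D_i=2$ forces $Q_{i+1}=Q_i$, hence $A_i=\infty$), and in each allowed case $(Q_{i+1}-Q_i)+\ind(D_i=1)\in\{0,1\}$, so $A_i$ is well defined.

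It then remains to check that the triple $(A,S,Q)$ so constructed lies in $\cR^{(2)}$ and induces $D$, and that $(A,S)$ is unique. The first and second conditions of (\ref{R2def}) hold by construction; for the third I would verify $D_i\leq A_i$ case by case, the only binding constraint being that when $S_i=1$ and $A_i=1$ the recovered transition $Q_{i+1}=Q_i$ forces $D_i=1$. Since $D$ satisfies all defining properties of $D(A,S,Q)$, the uniqueness clause of (\ref{R2def}) gives $D(A,S,Q)=D$; and uniqueness of $(A,S)$ is immediate, as the formulas determine $S$ from $D$ and $A$ from $(Q,D)$ with no freedom. The only real care needed is bookkeeping: invoking the positivity hypothesis precisely where it keeps $\ind(A_i=1)$ in $\{0,1\}$ and rules out the forbidden transition $D_i=1$, $Q_{i+1}=Q_i-1$ at $Q_i=0$.
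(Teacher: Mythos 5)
Your proposal is correct and follows essentially the same route as the paper: part (i) by reading off the diagonal/sub-/super-diagonal entries of $\mI+\mepsilon$, $\malpha$, $\mI+\mdelta$ and matching them to the four lines of (\ref{weightdef}), and part (ii) by recovering $S$ from $D$ and $A$ from the queue-length recursion, with positivity ruling out the degenerate transitions. The paper phrases the recovery of $(A,S)$ as ``read off the corresponding line of (\ref{weightdef})'' rather than via the explicit formula $\ind(A_i=1)=(Q_{i+1}-Q_i)+\ind(D_i=1)$, but this is the same argument.
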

\begin{proof}
Note that the non-zero entries of the matrices 
$X_r^{(2)}$, $r=1,2,\infty$, are precisely the 
diagonal and super-diagonal entries of $X_\infty^{(2)}=I+\epsilon$,
the diagonal and sub-diagonal entries of $X_1^{(2)}=I+\delta$, 
and the diagonal entries of $X_2^{(2)}=\alpha$. Then 
there is an exact correspondence between the four cases
\begin{align}
\nonumber
&D_i=\infty, Q_{i+1}-Q_i\in\{0,1\}\\
\label{DQlist}
&D_i=1, Q_{i+1}-Q_i=0\\
\nonumber
&D_i=1, Q_{i+1}-Q_i=-1\\
\nonumber
&D_i=2, Q_{i+1}-Q_i=0
\end{align}
and the four lines of (\ref{weightdef}).
For part (i), we can verify that 
when any one of these four cases hold, 
the relevant matrix entry, namely the $(Q_i, Q_{i+1})$ entry
in $X^{(2)}_{D_i}$, equals the weight defined in (\ref{weightdef}).

For part (ii), if we are given $D$, $Q$ such that for each $i$
one of the lines in (\ref{DQlist}) is satisfied, then defining $A_i$ and $S_i$ for each $i$ according to the corresponding line of (\ref{weightdef}) satisfies (\ref{R2def}), so that 
$(A,S,Q)\in\cR^{(2)}$ and $D=D(A,S,Q)$ as required.

Finally, from (\ref{R2def}) it's also easy to see that for a given $D,Q$ there could not be more than one pair $A,S$ such that $(A,S,D,Q)\in\cR^{(2)}$ (since the first line of (\ref{R2def}) 
determines $A$, and the second and third lines determine $S$). 
\end{proof}

We will need one further property,
that the denominator in (\ref{PQASdef}) does not depend on $A$ and $S$.
\begin{lemma}\label{lem:denominator2}
Take any $A$, $S$ with $\sum_i \ind(S_i=1)-\sum_i \ind(A_i=1)=k_2$. Then
\[
\sum_Q w(Q|A,S)=\left(1-q^{k_2}\right)^{-1},
\]
so for any $Q$, $P(Q|A, S)=(1-q^{k_2})w(Q|A,S)$.
\end{lemma}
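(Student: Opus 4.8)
The plan is to rewrite the sum over valid queue-length processes as a matrix trace, and then evaluate that trace using only the $q$-commutation relations (\ref{fundamentalrelations}). First I would appeal to Lemma \ref{lemma:matrixweights}(i) to write, for each valid $Q$, $w(Q|A,S)=\prod_{i\in\ZZ_L}(X^{(2)}_{D_i})_{Q_i,Q_{i+1}}$ with $D=D(A,S,Q)$. The only freedom in a valid $Q$ is at the sites where $S_i=1$ and $A_i=\infty$, where $D_i$ may be either $1$ (a departure, using the sub-diagonal of $X^{(2)}_1=\mI+\mdelta$) or $2$ (an unused service, using the diagonal of $X^{(2)}_2=\malpha$); summing over this binary choice replaces the site-weight by the corresponding entry of $\mdelta+\malpha$. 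At every other site the transition is forced: $\mepsilon$ when $(A_i,S_i)=(1,\infty)$, and $\mI$ when $(A_i,S_i)\in\{(\infty,\infty),(1,1)\}$. Since all entries are nonnegative and the matrices vanish on transitions forbidden by (\ref{R2def}) (in particular a departure from an empty queue has weight $1-q^0=0$), extending the sum to all of $\ZZ_+^{\ZZ_L}$ introduces only zero terms, and I obtain
\[
\sum_Q w(Q|A,S)=\trace\Big(\prod_{i\in\ZZ_L} M_i\Big),
\]
where $M_i\in\{\mepsilon,\mI,\mdelta+\malpha\}$ as above.

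Next I would discard the identity factors and count: writing $B:=\mdelta+\malpha$, the product is a cyclic word in $b$ copies of $\mepsilon$ (one per site with $A_i=1,S_i=\infty$) and $c$ copies of $B$ (one per site with $A_i=\infty,S_i=1$), and the hypothesis $\sum_i\ind(S_i=1)-\sum_i\ind(A_i=1)=k_2$ gives exactly $c-b=k_2$. The crucial algebraic input is the single relation
\[
\mepsilon B=q\,B\mepsilon+(1-q)\mI,
\]
which follows immediately from the first and third lines of (\ref{fundamentalrelations}), since $\mepsilon\mdelta=q\mdelta\mepsilon+(1-q)\mI$ and $\mepsilon\malpha=q\malpha\mepsilon$.

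The evaluation then proceeds by induction on $b$. For the base case $b=0$ the word is $B^{c}$; as $B$ is lower-triangular with diagonal entries $q^k$, so is $B^c$, whence $\trace(B^c)=\sum_{k\geq0}q^{kc}=(1-q^c)^{-1}=(1-q^{k_2})^{-1}$. For the inductive step ($b\geq1$), I would single out one $\mepsilon$ and commute it once around the cyclic word using the relation above: passing each of the $c$ copies of $B$ contributes a factor $q$ to the ``pass'' term and spawns a correction term in which that $B$ and the tagged $\mepsilon$ are annihilated, while passing the other $\mepsilon$'s is free, so that the fully-passed term returns the tagged $\mepsilon$ to its original position with total factor $q^{c}$. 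Collecting terms and using cyclicity of the trace yields
\[
\trace(W)=q^{c}\trace(W)+(1-q)\sum_{j=1}^{c} q^{j-1}\,\trace(U_j),
\]
where each $U_j$ is a cyclic word with $b-1$ copies of $\mepsilon$ and $c-1$ copies of $B$, hence with the same value $c-b=k_2$. By the inductive hypothesis $\trace(U_j)=(1-q^{k_2})^{-1}$ for every $j$; since $\sum_{j=1}^c q^{j-1}=(1-q^c)/(1-q)$, the displayed identity rearranges to $(1-q^c)\trace(W)=(1-q^c)(1-q^{k_2})^{-1}$, and dividing by $1-q^c\neq0$ gives $\trace(W)=(1-q^{k_2})^{-1}$, as required. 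The final assertion $P(Q|A,S)=(1-q^{k_2})w(Q|A,S)$ is then immediate from the definition (\ref{PQASdef}).

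I expect the main obstacle to be the bookkeeping in the first paragraph — verifying that summing over the departure/unused-service choice exactly produces the stochastic matrix $B=\mdelta+\malpha$ at the relevant sites, and that extending the state-sum to all nonnegative cyclic paths adds nothing, so that the finite matrix trace is legitimate. Once the sum is identified with the trace, the induction is routine; the only points needing care are the accounting of the powers of $q$ accumulated as the tagged $\mepsilon$ passes the $B$'s, and checking that $c-b=k_2$ is preserved along the recursion so that the inductive hypothesis applies, which it does precisely because each correction step lowers $b$ and $c$ together.
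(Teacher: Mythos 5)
Your proof is correct, but it takes a genuinely different route from the paper. You convert $\sum_Q w(Q|A,S)$ into the trace of a cyclic word in $\mepsilon$, $\mI$ and $B=\mdelta+\malpha$ (one letter per site, determined by $(A_i,S_i)$), and then evaluate that trace purely algebraically: the base case $\trace(B^c)=\sum_k q^{kc}=(1-q^c)^{-1}$ plus an induction driven by the single relation $\mepsilon B=qB\mepsilon+(1-q)\mI$, which indeed follows from (\ref{fundamentalrelations}). The bookkeeping you flag as the main risk checks out: the count $c-b=k_2$ is right (both excess quantities equal $\sum_i\ind(S_i=1,A_i=\infty)-\sum_i\ind(S_i=\infty,A_i=1)$), every closed path contributing to the trace must contain exactly $c-b\geq 1$ diagonal $\malpha$-steps, which gives geometric decay in the starting height and hence absolute convergence of all the (non-negative, banded) matrix traces involved, so the rearrangements and the use of cyclicity are legitimate. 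The paper instead proves the lemma probabilistically: it attaches i.i.d.\ geometric marks $B_i$ to the service times, identifies $w(Q|A,S)$ with $\PP_B(Q\sim B)$, shows any two compatible queue-length processes differ by a constant with a well-defined minimal one $Q_{\min}(B)$, and computes $\PP(Q'\sim B\mid Q=Q_{\min}(B))=q^{mk_2}$ (Lemma \ref{conditionallemma}), so that the total weight is the geometric sum $\sum_m q^{mk_2}$. Your argument is more self-contained and arguably cleaner for this one statement, exposing the identity as a consequence of the quadratic algebra alone; the paper's choice is deliberate, since the marks/minimal-process machinery is exactly what is then reused to prove Theorem \ref{thm:2algo} and its multi-type analogue, and it extends to Lemma \ref{lem:denominatorN} with essentially no new ideas, whereas your induction would need to be redone at the level of tensor products for general $N$.
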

This property is a central (and perhaps non-obvious) part of
the argument. Its proof will do much of the work needed for the justification of Algorithm \ref{2algo} in Theorem 
\ref{thm:2algo}, and is given below. 
Meanwhile we complete the proof of Theorem \ref{thm:2statring}:

\begin{proof}[Proof of Theorem \ref{thm:2statring}]
Write $P_{k_1,k_2}^{(2,L)}$ for the distribution described
by (\ref{weightdef}) and (\ref{PQASdef}). 

Take any configuration $D$ with $k_1$ first-class and $k_2$ second-class particles. We restrict to $A$ and $S$ with 
$\sum_i \ind(A_i=1)=k_1$ and $\sum_i \ind(S_i=1)=k_1+k_2$. Then
\begin{align*}
P_{k_1,k_2}^{(2,L)}(D)
&=\sum_{\substack{A,S,Q\in\cR^{(2)}:\\D=D(A,S,Q)}} 
P(Q|A, S)\ch{L}{k_1}^{-1} \ch{L}{k_1+k_2}^{-1}
\\
\intertext{(since $(A,S)$ is chosen uniformly from the $\ch{L}{k_1}\ch{L}{k_1+k_2}$ possibilities)}
&\propto \sum_{\substack{A,S,Q\in\cR^{(2)}:\\D=D(A,S,Q)}} w(Q|A, S)
\text{ \,\,\,(by Lemma \ref{lem:denominator2})}\\
&=\sum_{\substack{A,S,Q\in\cR^{(2)}:\\D=D(A,S,Q)}}
\prod_i w_i(Q|A, S)\\
&=\sum_{Q} \prod_i \left(X_{D_i}^{(2)}\right)_{Q_i , Q_{i+1}}
\text{\,\,\,(using Lemma 
\ref{lemma:matrixweights})}\\
&=\operatorname{trace}\prod_i X_{D_i}^{(2)}\\
&\propto \nu_{k_1, k_2}^{(2,L)}(D),
\end{align*}
by Theorem \ref{PEMtheorem}. So
the two distributions $P_{k_1,k_2}^{(2,L)}$ and $\nu_{k_1,k_2}^{(2,L)}$ are the same as required.
\end{proof}

Now we describe how to generalise the argument to the case $N>2$,
to prove Theorem \ref{thm:Nstatring}.

A queue-length process now has $N-1$ entries,
so that $Q_i =(Q^{(1)}(i),\dots, Q^{(N-1)}(i))$,
where $Q^{(n)}(i)$ gives the number of type-$n$
customers in the queue. The arrival and departure 
proceses are elements of $\{1,2,\dots, N-1, \infty\}^{\ZZ_L}$
and of $\{1,2,\dots, N,\infty\}^{\ZZ_L}$ respectively. 

\begin{lemma}\label{lemma:multimatrixweights}
$ $
\begin{itemize}
\item[(i)] 
Suppose $(A,S,Q)\in\cR^{(N)}$,
and let $D=D(A,S,Q)$.
Then for all $i$,
\[
\left(a^{(N)}_{A_i, D_i}\right)_{Q_i, Q_{i+1}}=w_i(Q|A,S).
\]
\item[(ii)]
Given $A$, $D$, $Q$, suppose 
\[
\prod_i\left(a^{(N)}_{A_i, D_i}\right)_{Q_i, Q_{i+1}}>0.
\]
Then there exists a unique $S$ such that
$(A,S,Q)\in\cR^{(N)}$ and $D=D(A,S,Q)$. 
\end{itemize}
\end{lemma}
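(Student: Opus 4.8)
The plan is to prove Lemma \ref{lemma:multimatrixweights} by a direct case-by-case verification, following exactly the template already established for the $N=2$ case in Lemma \ref{lemma:matrixweights}, but now tracking the tensor-product structure of the matrices $a^{(N)}_{m,n}$ defined in (\ref{adef}). The central observation is that the weight $w_i(Q|A,S)$ in (\ref{multiweightdef}) is organised into six cases according to the pair $(A_i, D_i)$ and the increment $Q_{i+1}-Q_i$, and each such case corresponds to exactly one of the defining lines of (\ref{adef}). So the bulk of the work is to compute, for each admissible pair $(m,n)=(A_i,D_i)$, the single matrix entry $\left(a^{(N)}_{m,n}\right)_{Q_i,Q_{i+1}}$ and check it equals the prescribed weight.

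For part (i), I would proceed as follows. First I would record how a tensor product $M_1\otimes M_2\otimes\cdots\otimes M_{N-1}$ of the fundamental matrices acts on the index $Q_i=(Q^{(1)}(i),\dots,Q^{(N-1)}(i))$: its $(Q_i,Q_{i+1})$ entry factorises as $\prod_{r=1}^{N-1}(M_r)_{Q^{(r)}(i),Q^{(r)}(i+1)}$. Then I would use the explicit forms (\ref{adedef}): $\malpha$ is diagonal with $\malpha_{k,k}=q^k$, so it forces $Q^{(r)}(i+1)=Q^{(r)}(i)$ and contributes a factor $q^{Q^{(r)}(i)}$; $\mdelta$ has $\mdelta_{k-1,k}$... — more precisely $\mdelta_{k,k-1}=1-q^{k}$, so it forces a decrement and contributes $1-q^{Q^{(r)}(i)}$; $\mepsilon$ has $\mepsilon_{k,k+1}=1$, forcing an increment with factor $1$; and $\mI$ forces no change with factor $1$. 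Matching these against the definition of $a^{(N)}_{m,n}$ in (\ref{adef}), the leading block $\malpha^{\otimes(n-1)}$ produces exactly the factor $q^{Q^{(\leq n-1)}(i)}$, the $\mdelta$ in the $n$th slot produces the departure factor $1-q^{Q^{(n)}(i)}$ and enforces $Q^{(n)}(i+1)=Q^{(n)}(i)-1$, and the $\mepsilon$ in the $m$th slot (when $m<\infty$ and $m\neq n$) enforces the arrival increment $Q^{(m)}(i+1)=Q^{(m)}(i)+1$. I would then run through the six cases of (\ref{multiweightdef}) one at a time and confirm the match: for instance the third line ($A_i=\infty$, $D_i=n$, a pure departure) corresponds to $a^{(N)}_{\infty,n}=\malpha^{\otimes(n-1)}\otimes\mdelta\otimes\mI^{\otimes(N-n-1)}$, giving entry $q^{Q^{(\leq n-1)}(i)}(1-q^{Q^{(n)}(i)})$, exactly as required; and the degenerate cases $a^{(N)}_{n,n}$ (arrival and departure of the same type) and $a^{(N)}_{\infty,N}$ (unused service) produce the pure powers $q^{Q^{(\leq n-1)}(i)}$ and $q^{Q^{(\leq N-1)}(i)}$ on the diagonal. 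The main subtlety to handle carefully is the case $n<m<\infty$, where $a^{(N)}_{m,n}$ contains both a $\mdelta$ (in slot $n$) and an $\mepsilon$ (in slot $m$), giving simultaneously a type-$n$ decrement and a type-$m$ increment; this matches the fourth line of (\ref{multiweightdef}) with $Q_{i+1}=Q_i-e_n+e_{A_i}$, and I must check the power of $q$ coming from the $\malpha^{\otimes(n-1)}$ prefix is unaffected by the later $\mepsilon$ (which it is, since $\mepsilon$ sits in a slot $r=m>n$, outside the range contributing to $Q^{(\leq n-1)}$).

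For part (ii), the argument mirrors Lemma \ref{lemma:matrixweights}(ii). Given $A$, $D$, $Q$ with the product of matrix entries strictly positive, each factor $\left(a^{(N)}_{A_i,D_i}\right)_{Q_i,Q_{i+1}}$ is nonzero, which by part (i) and the structure above forces, for each $i$, that the triple $(A_i,D_i,Q_{i+1}-Q_i)$ falls into one of the six admissible patterns. I would then read off $S_i$ from the value of $D_i$: necessarily $S_i=\infty$ precisely when $D_i=\infty$, and $S_i=1$ otherwise. The first line of (\ref{RNdef}) is then automatic from the increment pattern, and the implications $S_i=\infty\Rightarrow D_i=\infty$ and $S_i=1\Rightarrow D_i<\infty,\ D_i\leq A_i$ hold case by case (the inequality $D_i\leq A_i$ being exactly the restriction that a departing customer has type no larger than an arriving one, visible in which cases of (\ref{adef}) are nonzero). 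Uniqueness of $S$ is immediate since $S_i$ is determined by whether $D_i=\infty$. The one point requiring care is to verify that positivity of the entry genuinely excludes the forbidden combinations — in particular that $a^{(N)}_{m,n}=0$ for $m<n<\infty$ is respected — so that the reconstructed $S$ does satisfy $D_i\leq A_i$ and no spurious case arises.

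I expect the main obstacle to be purely bookkeeping: correctly disentangling which tensor slot each of $\malpha$, $\mdelta$, $\mepsilon$ occupies in each of the several cases of (\ref{adef}), and confirming that the product of diagonal $\malpha$-factors telescopes into the cumulative exponent $Q^{(\leq n-1)}(i)$ while the $\mdelta$- and $\mepsilon$-factors land in the right slots to reproduce the increments $-e_n$ and $+e_{A_i}$. There is no conceptual difficulty beyond the $N=2$ case already treated; the work is to keep the index arithmetic straight across all six weight cases simultaneously.
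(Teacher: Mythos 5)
Your proposal is correct and follows essentially the same route as the paper's proof: a line-by-line matching of the six cases of (\ref{multiweightdef}) with the first six lines of (\ref{adef}), using the factorisation of tensor-product entries and the explicit forms (\ref{adedef}) so that the $\malpha^{\otimes(n-1)}$ prefix yields $q^{Q^{(\leq n-1)}(i)}$, the $\mdelta$ slot yields $1-q^{Q^{(n)}(i)}$ with decrement $-e_n$, and the $\mepsilon$ slot yields the increment $+e_{A_i}$; part (ii) is likewise handled by reading off $S_i$ from whether $D_i=\infty$. The paper merely works one representative case (the fourth line, $n<m\leq N-1$) in detail rather than all six, but the argument is the same.
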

\begin{proof}
Similarly to the proof of Lemma \ref{lemma:matrixweights},
we need to show that the lines of 
(\ref{multiweightdef})
correspond to the cases where the entries in the tensor 
products defined in (\ref{adef}) are non-zero.
This correspondence will be line-by-line between 
the first six lines of (\ref{adef}) and the six lines of
(\ref{multiweightdef}). We will not go through every case, but will give an example. 
 
First consider the four matrices $I$, $\alpha$, $\delta$ and $\epsilon$ involved in the tensor products in (\ref{adef}), using as before the explicit forms at (\ref{adedef}). The non-negative
entries of $I$ and $\alpha$ are precisely the diagonal entries; 
those of $\delta$ are the entries on the subdiagonal, and
those of $\epsilon$ are those on the superdiagonal. 

Then consider for example the fourth line of (\ref{adef}), 
which is the case of $a_{m,n}^{(N)}$ for $n<m\leq N-1$. 
Inspecting the tensor product, we see that the subdiagonal 
matrix $\delta$ appears in the $n$th component, and the superdiagonal
matrix $\epsilon$ appears in the $m$th component; all the other matrices involved are diagonal. So the only non-zero terms 
$\left(a_{m,n}^{(N)}\right)_{Q_i, Q_{i+1}}$ are 
those where $Q_{i+1}=Q_i-e_n+e_m$. This corresponds to the
fourth line of (\ref{multiweightdef}). We then need to check that the 
weight defined in the fourth line of (\ref{multiweightdef}) 
is the same as the entry in the tensor product. 
The components $r=1,2,\dots,n-1$ of the tensor
product contribute values $\alpha_{Q_i^{(r)},Q_i^{(r)}}=q^{Q_i^{(r)}}$, 
giving $q^{Q_i^{(\leq n-1)}}$ between them. The $n$th component
contributes the value $\delta_{Q_i^{(n)}, Q_i^{(n)}-1}
=1-q^{(Q_i^{(n)}}$. The remaining components all contribute 
the value $1$ from the relevant entries in the matrix $\epsilon$ 
or $I$. Multiplying together, we indeed obtain the weight
defined in the fourth line of (\ref{multiweightdef}) as required.

In a similar way each of the first six lines of (\ref{adef})
accords with the corresponding line of (\ref{multiweightdef}).

So for part (i), we have that if (\ref{RNdef}) is satisfied,
then one of the lines of (\ref{multiweightdef}) holds, 
and then the weight defined by that line is the same as the tensor entry in the corresponding line of 
(\ref{adef}).

For part (ii), suppose we have $A$, $D$, $Q$ such that 
the tensor entry $\left(a^{(N)}_{A_i, D_i}\right)_{Q_i, Q_{i+1}}$
is positive for each $i$. That is, for each $i$, one of the 
lines of $(\ref{adef})$ is positive; then one can verify 
as above that if we take $S_i=1$ if $D_i<\infty$ and $S_i=\infty$ if $D_i=\infty$, the corresponding line of $(\ref{multiweightdef})$ 
also holds, and that this is the only such choice of $S_i$. 
This choice of $S$ then satisfies (\ref{RNdef}) 
(and is the only such choice). 
\end{proof}


The corresponding result to Lemma \ref{lem:denominator2}
is:
\begin{lemma}\label{lem:denominatorN}
Fix any $A$, $S$ with $|\cA_n|=k_n$ for $1\leq n\leq N-1$ 
and $|\cS|=k_1+k_2+\dots+k_N$. Then
\[
\sum_Q w(Q|A,S)=
\left(1-q^{k_2}\right)^{-1}
\dots
\left(1-q^{k_N}\right)^{-1},
\]
so for any $Q$, $P(Q|A, S)=
(1-q^{k_2})\dots(1-q^{k_N})w(Q|A,S)$.
\end{lemma}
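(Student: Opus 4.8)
The plan is to turn the sum into a trace of transfer matrices and then evaluate that trace by induction on $N$, the base of the induction being the two-type computation of Lemma~\ref{lem:denominator2}. First, I would sum the identity of Lemma~\ref{lemma:multimatrixweights}(i) over all departure values $D_i$ compatible with the fixed pair $(A_i,S_i)$. By the disjointness in part (ii) of that lemma (for fixed $A_i,S_i$ the admissible $D_i$ force distinct transitions $Q_i\mapsto Q_{i+1}$), this gives
\[
\sum_Q w(Q|A,S)=\operatorname{trace}\prod_{i\in\ZZ_L} T_i,\qquad T_i:=\sum_{D_i} a^{(N)}_{A_i,D_i},
\]
the product being in cyclic order. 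Reading $T_i$ off from (\ref{adef}): it is $\id^{\otimes(N-1)}$ when $S_i=A_i=\infty$, a single $\mepsilon$-term when $S_i=\infty$ and $A_i<\infty$, and — the essential case, $S_i=1$ — a telescoping sum which, by the recursive tensor structure behind (\ref{Xdef}), splits along the first tensor slot as $\mdelta\otimes(\cdot)+\malpha\otimes(\cdot)$, corresponding to the top-priority customer either taking the service or rejecting it and passing it on.

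I would next record the two facts that drive the whole argument. Writing $M:=\mdelta+\malpha$, the relations (\ref{fundamentalrelations}) give
\[
\mepsilon M-qM\mepsilon=(1-q)\id ,
\]
and since $M$ is lower-bidiagonal with diagonal entries $q^k$ one has $\operatorname{trace}(M^j)=\sum_{k\ge0}q^{jk}=(1-q^j)^{-1}$ for $j\ge1$. The two-type statement is then exactly that any cyclic word with $a$ copies of $\mepsilon$ and $a+k_2$ copies of $M$ has trace $(1-q^{k_2})^{-1}$. I would prove this by induction on $a$: for $a=0$ the word is $M^{k_2}$; for $a\ge1$ cyclicity lets me rotate to $\mepsilon M W'$, and the commutation relation yields
\[
\operatorname{trace}(\mepsilon MW')=q\,\operatorname{trace}(\mepsilon W'M)+(1-q)\operatorname{trace}(W'),
\]
where $\operatorname{trace}(W')=(1-q^{k_2})^{-1}$ by the inductive hypothesis (one fewer $\mepsilon$) and $\mepsilon W'M$ has the same letter counts as $W$. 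Setting $c=(1-q^{k_2})^{-1}$ this reads $\operatorname{trace}(W)-c=q\big(\operatorname{trace}(\mepsilon W'M)-c\big)$; taking the maximum of $|\operatorname{trace}(\cdot)-c|$ over the finitely many arrangements and using $q<1$ forces that maximum to be $0$, so every arrangement has trace $c$.

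For general $N$ I would induct on $N$ by peeling off the top-priority slot. Expanding $\prod_i T_i$ over the binary $\mdelta/\malpha$ choices at the service sites writes it as $\sum_\sigma W_1(\sigma)\otimes W_{\mathrm{rest}}(\sigma)$, where $W_1(\sigma)$ is a word in $\mepsilon,\mdelta,\malpha$ on the first slot and $W_{\mathrm{rest}}(\sigma)$ is the transfer product of the $(N-1)$-type queue on the remaining slots whose service times are exactly the sites at which the top priority rejected. Using $\operatorname{trace}(B\otimes C)=\operatorname{trace}(B)\operatorname{trace}(C)$ and the fact that $\operatorname{trace}W_{\mathrm{rest}}(\sigma)$ takes a common value (by the inductive hypothesis, depending only on the counts) whenever $\operatorname{trace}W_1(\sigma)\ne0$, the double sum factors as that common value times $\sum_\sigma\operatorname{trace}W_1(\sigma)=\operatorname{trace}\big[\sum_\sigma W_1(\sigma)\big]$; and $\sum_\sigma W_1(\sigma)$ is precisely the two-type word obtained by replacing each first-slot choice by $M$, to which the previous paragraph applies. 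Assembling the factor produced at each priority level then yields the stated product.

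The step I expect to be the main obstacle is the evaluation of these traces on the infinite-dimensional tensor space. The factorisation $\operatorname{trace}(B\otimes C)=\operatorname{trace}(B)\operatorname{trace}(C)$ must be handled term by term, since individual factors are of the indeterminate form $0\cdot\infty$, and one must verify that each word-trace genuinely converges; this is exactly where the hypothesis $k_n\ge1$ is used, as it provides a net downward drift making the relevant diagonal entries summable. Some further care is needed to check that the exponents accumulated level by level combine into the claimed product.
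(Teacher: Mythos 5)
Your route is entirely different from the paper's. The paper proves this lemma probabilistically: it attaches i.i.d.\ geometric marks $B_i$ to the service times, writes $w(Q|A,S)=\PP_B(Q\sim B)$, and sums the conditional probabilities $\PP_B(Q'\sim B\mid Q=\Qmin(B))=q^{m_1k_2+\dots+m_{N-1}k_N}$ of Lemma \ref{multiconditionallemma} over the shifts $(m_1,\dots,m_{N-1})$ of the minimal compatible process. Your reduction of $\sum_Q w(Q|A,S)$ to $\trace\prod_i T_i$ is sound (it is essentially the identity used inside the proof of Theorem \ref{thm:Nstatring}), and your two-type evaluation via $\mepsilon M-qM\mepsilon=(1-q)\mI$, $\trace(M^j)=(1-q^j)^{-1}$ and the contraction over cyclic words is correct modulo the convergence checks you flag. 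The gap is in the final assembly, and it is quantitative. When you peel off the top-priority slot, the word $\sum_\sigma W_1(\sigma)$ has $a$ copies of $\mepsilon$ and $a+(k_2+\dots+k_N)$ copies of $M$: the top-priority customer is offered \emph{every} service not consumed by a type-$1$ arrival, and the excess of such services over type-$1$ arrivals is $k_2+\dots+k_N$, not $k_2$. So your own two-type computation produces the factor $(1-q^{k_2+\dots+k_N})^{-1}$ at the top level, and iterating gives $\prod_{n=2}^{N}\bigl(1-q^{k_n+k_{n+1}+\dots+k_N}\bigr)^{-1}$, which is not the stated product. Your closing sentence asserts the factors ``combine into the claimed product''; they do not.

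What makes this worth dwelling on is that your method appears to give the \emph{correct} value, and the constant printed in the lemma appears to be wrong for $N\geq 3$. Direct check: take $N=3$, $L=3$, $k_1=k_2=k_3=1$, $A=(1,2,\infty)$, $S=(1,1,1)$. The departure process is forced to be $D=(1,2,3)$, the valid $Q$ are exactly the constants $(j,k)$ with $j,k\geq0$, and (\ref{multiweightdef}) gives weight $1\cdot q^{j}\cdot q^{j+k}$, so
\[
\sum_Q w(Q|A,S)=\sum_{j,k\geq0}q^{2j+k}=\frac{1}{(1-q^2)(1-q)},
\]
which matches $\prod_{n=2}^{3}(1-q^{k_n+\dots+k_3})^{-1}$ and not the claimed $(1-q)^{-2}$. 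The paper's one-line proof sums $q^{m_1k_2+\dots+m_{N-1}k_N}$ over all of $\ZZ_{\geq0}^{N-1}$, but a shift yields a genuine queue-length process only if $Q'^{(n)}(i)=Q^{(n)}(i)+m_n-m_{n-1}\geq0$ for all $n,i$, which constrains the sum (to $m_2\geq m_1$ in the example above). The property actually used downstream, in the proof of Theorem \ref{thm:Nstatring} and in Lemma \ref{lemma:Almqueue}, is only that $\sum_Q w(Q|A,S)$ depends on $(A,S)$ through the particle counts alone; both your argument and the paper's establish that, so nothing else in the paper is endangered. But as a proof of the statement as literally written, your argument (once the bookkeeping is done honestly) refutes rather than confirms the displayed constant, and you should say so rather than claim to recover it.
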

Again this result is a central part of the subsequent argument,
and we prove it below. 

\begin{proof}[Proof of Theorem \ref{thm:Nstatring}]
Write $P_{k_1,\dots,k_N}^{N,L}$ for the distribution resulting from 
the weight defined at \ref{multiweightdef}. 
Let $D$ be any configuration with particle counts $k_1,\dots,k_N$.
Note that if $D=D(A,S,Q)$ then $A$ has particle counts
$k_1,\dots, k_{N-1}$ and $\sum_{i} \ind(S_i=1)=k_1+\dots+k_N$. 
We have
\begingroup\allowdisplaybreaks
\begin{align*}
P_{k_1,\dots, k_N}^{(N,L)}(D)
&=
\sum_{\substack{(A,S,Q)\in\cR^{(N)}:\\ D=D(A,S,Q)}}
P(Q|A, S)\nu_{k_1, \dots, k_{N-1}}^{(N-1,L)}(A)\Ch{L}{k_1+\dots+k_N}^{-1}
\\
&\propto
\sum_{\substack{(A,S,Q)\in\cR^{(N)}:\\ D=D(A,S,Q)}}
w(Q|A, S)\nu_{k_1, \dots, k_{N-1}}^{(N-1,L)}(A)
\text{\,\,\, (by Lemma \ref{lem:denominatorN})}
\\
&=
\sum_{\substack{(A,S,Q)\in\cR^{(N)}:\\ D=D(A,S,Q)}}
\prod_i 
\left(a^{(N)}_{A_i , D_i}\right)_{Q_i , Q_{i+1}}
\nu_{k_1, \dots, k_{N-1}}^{(N-1)}(A)
\text{\,\,\, (by Lemma \ref{lemma:multimatrixweights})}
\\
&=
\sum_{A,Q} 
\prod_i 
\left(a^{(N)}_{A_i , D_i}\right)_{Q_i , Q_{i+1}}
\nu_{k_1, \dots, k_{N-1}}^{(N-1)}(A)
\text{\,\,\, (by Lemma \ref{lemma:multimatrixweights} again)}
\\
&=
\sum_A \nu_{k_1, \dots, k_{N-1}}^{(N-1)}(A)
\sum_Q
\prod_i 
\left(a^{(N)}_{A_i , D_i}\right)_{Q_i , Q_{i+1}}
\\
&=\sum_A 
\trace\left(
X_{A_1}^{(N-1)}\dots X_{A_L}^{(N-1)}
\right)
\trace\left(a^{(N)}_{A_0, D_0}\dots a^{(N)}_{A_{L-1}D_{L-1}}\right)
\\
&=
\trace
\left(
\left(\sum_{A_0}a^{(N)}_{A_0D_0}\otimes X^{(N-1)}_{A_0}\right)
\dots
\left(\sum_{A_{L-1}}a^{(N)}_{A_{L-1}D_{L-1}}\otimes X^{(N-1)}_{A_{L-1}}\right)
\right)
\\
&=
\trace\left(X_{D_0}^{(N)}\dots X_{D_{L-1}}^{(N-1)}\right)
\\
&\propto \nu_{k_1,\dots, k_N}^{(N,L)}(D),
\end{align*}%
\endgroup
so that the distributions $P_{k_1,\dots, k_N}^{(N,L)}$
and $\nu_{k_1,\dots, k_N}^{(N,L)}$ are the same as required.
\end{proof}

\subsection{Algorithmic results: Proofs of 
Theorem \ref{thm:2algo} and Theorem \ref{thm:multialgo}}

In this section we complete the 
proofs of Lemmas \ref{lem:denominator2} and \ref{lem:denominatorN}
left over from the last section (hence completing 
the proofs of Theorems \ref{thm:2statring} and \ref{thm:Nstatring}
from that section), and proceed to justify the results
of Theorems \ref{thm:2algo} and \ref{thm:multialgo} that
Algorithms \ref{2algo} and \ref{multialgo} produce
samples from the multi-type stationary distributions.

We start with the $2$-type case. 
Throughout, we fix some arrival process $A$ with $k_1$ arrivals, 
and some service process $S$ with $k_1+k_2$ potential
services. Let $\cS$ be the set of times where service is offered; that is, $\cS=\{i\in\ZZ_L:S_i=1\}$.

We first define a system of 
random marks attached to the service times, 
which will be used to determine which services are used
and which remain unused. Let $B_i , i\in\cS$ be 
i.i.d.\ geometric with parameter $q$; that is,
$B_i$ takes value $b_i\in\{0,1,2,\dots\}$ with probability $q^{b_i}(1-q)$. 
We write $\PP_B$ for the joint law of the $B_i $, 
so for a given vector $(b_{i})_{i\in\cS}$, we have 
$\PP_B(B_i =b_{i} \text{ for all } i)=
\prod_{i\in\cS}q^{b_{i}}(1-q)=(1-q)^{|\cS|}q^{\sum b_{i}}$.

Recall that under our queueing model, if there is no arrival at $i$ and 
the current queue-length $Q_i $ is $k$, the service at $i$ is unused with probability
$q^k$ and used with probability $1-q^k$. We will arrange
that the service at $i$ is used if $B_i <Q_i $, and unused if $B_i \geq Q_i $.
Hence $B_i $ may be interpreted as the number of times that the service at 
$i$ is refused by a customer.
Suppose $Q$ is a valid queue-length process (i.e. $(A,S,Q)\in\cR^{(2)}$).
We will say that $Q$ is compatible with a mark $b_{i}$ at $i$, where $S_i =1$, if one of the following
three conditions holds:
\begin{align}
\nonumber
&\bullet\,\, A_i =1;\\
\label{consistencycondition}
&\bullet\,\, A_i =\infty,\, D_i =2 \text{ and } Q_i \leq b_i ;\\
\nonumber
&\bullet\,\, A_i =\infty,\, D_i =1 \text{ and } Q_i > b_i .  
\end{align}
If $Q$ is compatible with $b_{i}$ for each $i\in\cS$, we simply say that $Q$ is compatible
with the collection $b=\{b_{i}, i\in\cS\}$, and we write $Q\sim b$.

Now we may rewrite the definition of $w$ around (\ref{weightdef}) as follows. 
Given $A$, $S$ and a valid queue-length process $Q$,
\begin{equation}\label{Wdev}
w(Q|A,S)=\PP_B(Q\sim B).
\end{equation}

\begin{lemma}\label{twoQlemma}
Fix $A$ and $S$.
Suppose that two valid queue-length processes $Q$ and $Q'$ are both compatible with 
the same set of marks $b$. Then 
$Q$ and $Q'$ differ by a constant, 
and also $D(A,S,Q)=D(A,S,Q')$. 
\end{lemma}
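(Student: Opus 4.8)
The plan is to control the difference $c_i:=Q_i-Q'_i$ and to show, via a discrete maximum principle on the ring, that it is constant. First I would record that, with $A$ and $S$ fixed, the recursion (\ref{qrec}) gives $Q_{i+1}-Q_i=\ind(A_i=1)-\ind(D_i=1)$ with $D=D(A,S,Q)$, and likewise $Q'_{i+1}-Q'_i=\ind(A_i=1)-\ind(D'_i=1)$ with $D'=D(A,S,Q')$. Subtracting, $c=(c_i,i\in\ZZ_L)$ is a cyclic integer sequence whose increments satisfy $c_{i+1}-c_i=\ind(D'_i=1)-\ind(D_i=1)\in\{-1,0,1\}$. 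Proving that $c$ is constant is exactly the assertion that $Q$ and $Q'$ differ by a constant; and since constancy forces $\ind(D_i=1)=\ind(D'_i=1)$ at every site, it will also deliver $D=D'$ once the site-by-site structure below is in place.

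Next I would classify the sites according to the type of step. At a non-service site both $D_i$ and $D'_i$ equal $\infty$, and at a service site with an arrival ($A_i=1$) both equal $1$ (both facts forced by (\ref{R2def})), so in either case $c$ is unchanged there. The only sites that contribute are service sites with no arrival, where $D_i,D'_i\in\{1,2\}$; at such a site the compatibility condition (\ref{consistencycondition}) applied to the common marks $b$ forces $\ind(D_i=1)=\ind(Q_i>b_i)$, and similarly $\ind(D'_i=1)=\ind(Q'_i>b_i)$. Hence at such sites $c_{i+1}-c_i=\ind(Q'_i>b_i)-\ind(Q_i>b_i)$, and nowhere else does $c$ move.

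The crucial observation is a sign constraint on these moves: $c$ can increase ($c_{i+1}-c_i=+1$) only when $Q'_i>b_i\ge Q_i$, which forces $c_i\le-1$, and it can decrease only when $Q_i>b_i\ge Q'_i$, which forces $c_i\ge1$. I would then run the maximum principle on the ring. If $c$ were nonconstant, its maximum $M$ would be attained on a nonempty proper subset of $\ZZ_L$, so there is a site $j$ with $c_j<M$ and $c_{j+1}=M$; this step is an increase with $c_j=M-1$, so the sign constraint gives $M-1\le-1$, that is $M\le0$. The symmetric argument at the minimum $m$ gives $m\ge0$, and since $m\le M$ this forces $m=M=0$ and hence $c\equiv0$, a contradiction. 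Therefore $c$ is constant, which is precisely the statement that $Q$ and $Q'$ differ by a constant; constancy means $\ind(D_i=1)=\ind(D'_i=1)$ for every $i$, and because the only ambiguous sites have $D_i,D'_i\in\{1,2\}$, this yields $D_i=D'_i$ throughout, i.e.\ $D(A,S,Q)=D(A,S,Q')$. The main obstacle is precisely this maximum-principle step: recognising that the monotone structure of the increments pins the difference at a constant, so that the argument needs no delicate estimate once the sign constraint is isolated.
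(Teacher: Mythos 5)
Your proof is correct, but it closes the argument by a genuinely different route from the paper's. Both proofs rest on the same site-by-site classification and on the observation that at a no-arrival service site the common marks force $\ind(D_i=1)=\ind(Q_i>b_i)$ and $\ind(D'_i=1)=\ind(Q'_i>b_i)$. The paper, however, proceeds in two stages: it first propagates the pointwise comparison around the ring ($Q'_i\geq Q_i$ at one site implies $Q'_{i+1}\geq Q_{i+1}$, hence the inequality holds everywhere), deduces that $D'$ has a departure wherever $D$ does, and then invokes the global conservation fact that both processes have exactly $k_1$ departures to conclude that the departure sets coincide and hence that the difference is constant. Your maximum-principle argument on $c_i=Q_i-Q'_i$ replaces both stages with a single local observation --- the difference can increase only where it is at most $-1$ and decrease only where it is at least $1$ --- so the boundary sites of the max- and min-sets pin $M\le 0\le m$ and no counting of departures is needed; the identity $D=D'$ then falls out of the vanishing of the increments of $c$ together with the fact that the only ambiguous sites have $D_i,D'_i\in\{1,2\}$. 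The two routes exploit the same underlying monotone structure, but yours is self-contained at the level of the difference process, while the paper's leans on the cyclic conservation of customers. (A minor point of bookkeeping: on the ring the increment identity you want is the first line of (\ref{R2def}) rather than (\ref{qrec}), which is stated for the queue on $\ZZ$; the content is identical.)
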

\begin{proof}
Let $D=D(A,S,Q)$ and $D'=D(A,S,Q')$
be the departure processes associated to $Q$ and $Q'$ respectively. 
Without loss of generality, suppose that for some $i$, 
$Q'_{i}\geq Q_i $. We wish to show that also $Q'_{i+1}\geq Q_{i+1}$.
Consider two cases. Suppose $Q'_{i}>Q_i $. Since both 
processes have the same arrivals, and there is at most one 
departure at any time, certainly also $Q'_{i+1}\geq Q_{i+1}$.
If instead $Q'_{i}=Q_i $ then from the fact that
$Q$ and $Q'$ are compatible with the same marks $b$,
we have $D'_{i}=D_i $ and hence also $Q'_{i+1}=Q_{i+1}$. 

Hence we obtain $Q'_{i}\geq Q_i $ for all $i$. Again since $Q$ and $Q'$ are compatible with the same marks, we then obtain that $D'$ has a departure whenever $D$ has a departure. 
But the total number of departures
is $k_1$ in each process, so the set of departure times is the same for both. Since the set of arrival times is also the same, the queue-length processes differ by some constant.
\end{proof}

Before stating the next result, we formulate two further equivalent versions of Algorithm \ref{2algo}. 
Recall that Algorithm \ref{2algo} considers the $k_1$ arrivals 
in turn; when considering the $(r+1)$st arrival,
it chooses between the $k_1+k_2-r$ services still available. If a service coincides with 
the arrival, that one is chosen. Otherwise, we number the available services
$i_1, i_2,\dots, i_{k_1+k_2-r}$ in cyclic order, and service $i_j$ is chosen with probability 
$q^{j-1}/(1+q+q^2+\dots+q^{k_1+k_2-r-1})$. 

Equivalently, we can do the following. Now, when an arrival
does not find an available service coinciding with it,
we extend the list of available services 
$i_1,\dots,i_{k_1+k_2-r}$ cyclically
to an infinite sequence
$i_j$, $j\geq 1$, by setting $i_j=i_{j'}$ whenever $j\equiv j' \mod k_1+k_2-r$. 
Now we choose service $i_j$ with probability $q^{j-1}/(1+q+q^2+\dots)=q^{j-1}(1-q)$.
That is, we choose service $i_{J}$ where $J$ is a geometric random variable
with parameter $q$. We have the following interpretation:
we ``offer'' the arrival to each service $i_j$ in turn, proceding 
in cyclic order around the ring. 
Each service accepts the 
arrival with probability $1-q$. If a service accepts the arrival, then the arrival
is assigned to that service; otherwise we continue to the next service in the list.
If we have toured all the way around 
the ring without any offer being accepted, we continue in cyclic order, 
starting again at the beginning. It's easy to check that this procedure
gives the same distribution as the one in the previous paragraph.

Secondly, we can allow a set of marks $b=(b_i, i\in\cS)$ to control the procedure. 
As the algorithm proceeds,
a service at $i$ will ``reject'' the first $b_i $ times it 
receives an offer, and ``accept'' the next time
(if it is ever offered that many).  
If the marks $b_i $ are randomly chosen, with distribution i.i.d.\ geometric with parameter $q$, 
then this is again equivalent to accepting each offer independently 
with probability $1-q$. 

\newcommand{{\Qmin}}{Q_{\min}}

\begin{lemma}\label{Qminlemma}
Fix $A$ and $S$.
For each set of marks $b$, there exists a minimal process $\Qmin(b)$
among those queueing processes compatible with the marks $b$.
\end{lemma}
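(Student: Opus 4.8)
The plan is to realise the compatible queue-length processes as the fixed points of a monotone self-map of $\ZZ_+$, and then to extract the minimal fixed point by a discrete Tarski-type argument. First I would fix the marks $b=(b_i, i\in\cS)$ and note that, for a given value $Q_i$ at site $i$, the rules (\ref{R2def}) together with the compatibility condition (\ref{consistencycondition}) determine $D_i$, and hence $Q_{i+1}$, uniquely. Concretely, define $f_i\colon\ZZ_+\to\ZZ_+$ by $f_i(q)=q+1$ if $S_i=\infty,\,A_i=1$; by $f_i(q)=q$ if either $S_i=\infty,\,A_i=\infty$ or $S_i=1,\,A_i=1$; and, in the remaining case $S_i=1,\,A_i=\infty$, by $f_i(q)=q-\ind(q>b_i)$ (the service is used, with $D_i=1$, exactly when $q>b_i$, and is unused, with $D_i=2$, when $q\leq b_i$). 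A queue-length process $Q$ is valid and compatible with $b$ precisely when $Q_{i+1}=f_i(Q_i)$ for every $i\in\ZZ_L$; because time is cyclic, this is equivalent to saying that $Q_0$ is a fixed point of the composition $F=f_{L-1}\circ\dots\circ f_1\circ f_0$, with the rest of the process recovered by forward iteration (which stays in $\ZZ_+$ since each $f_i$ maps $\ZZ_+$ into itself).

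The crucial observation is that each $f_i$ is non-decreasing. This is immediate in the first two cases, and in the third the map $q\mapsto q-\ind(q>b_i)$ equals $q$ for $q\leq b_i$ and $q-1$ for $q\geq b_i+1$, with $f_i(b_i)=f_i(b_i+1)=b_i$, so it is monotone non-decreasing. Hence $F$, being a composition of non-decreasing maps $\ZZ_+\to\ZZ_+$, is itself non-decreasing.

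Next I would establish the existence of a minimal fixed point. For any $Q_0$, the partial sums give $Q_i\geq Q_0-(\text{number of departures so far})\geq Q_0-(k_1+k_2)$, since departures occur only at the $k_1+k_2$ service sites. Thus if $Q_0>\max_{i\in\cS}b_i+(k_1+k_2)$, the orbit stays strictly above every threshold $b_i$, so all $k_1+k_2$ services are used and $F(Q_0)=Q_0+k_1-(k_1+k_2)=Q_0-k_2\leq Q_0$. On the other hand $F(0)\geq 0$. Therefore $\{q\in\ZZ_+:F(q)\leq q\}$ is non-empty; let $q^*$ be its minimum. If $q^*\geq1$ then $F(q^*-1)\geq q^*$, and monotonicity gives $F(q^*)\geq F(q^*-1)\geq q^*$, which with $F(q^*)\leq q^*$ forces $F(q^*)=q^*$; if $q^*=0$ then $0\leq F(0)\leq 0$ gives $F(0)=0$. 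Either way $q^*$ is a fixed point, and since every fixed point lies in $\{q:F(q)\leq q\}$, it is the smallest one.

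Finally I would let $\Qmin(b)$ be the process obtained by forward iteration from $Q_0=q^*$. By Lemma \ref{twoQlemma} any two processes compatible with $b$ differ by a constant, so the one started from the smallest admissible initial value is componentwise smallest; this is exactly $\Qmin(b)$. I expect the only delicate point to be setting up the update maps $f_i$ correctly from (\ref{consistencycondition}) and checking the monotonicity claim case by case; once monotonicity is secured, the minimal-fixed-point extraction is routine.
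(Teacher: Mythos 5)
Your proof is correct, but it establishes existence of a compatible process by a genuinely different route from the paper. The paper's proof constructs one compatible process explicitly, by running the ``tour-around-the-ring'' reformulation of Algorithm \ref{2algo} and summing, customer by customer, the contributions to the queue length (including the extra constant contribution of $r$ from a customer whose service offer was rejected $r$ times); it then invokes Lemma \ref{twoQlemma} and non-negativity to conclude that a minimal compatible process exists. You instead encode validity-plus-compatibility as the deterministic recursion $Q_{i+1}=f_i(Q_i)$ with each $f_i$ monotone non-decreasing on $\ZZ_+$, reduce the cyclic closure condition to finding a fixed point of $F=f_{L-1}\circ\dots\circ f_0$, and extract the minimal fixed point by the standard argument that the least element of $\{q: F(q)\le q\}$ (non-empty because $F(q)=q-k_2$ for $q$ large and $F(0)\ge 0$) is a fixed point. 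Your case analysis of $f_i$ matches (\ref{R2def}) and (\ref{consistencycondition}), including the boundary case $Q_i=0$, and the final appeal to Lemma \ref{twoQlemma} to upgrade ``smallest initial value'' to ``componentwise smallest'' is legitimate since that lemma is proved independently beforehand. What the two approaches buy is slightly different: your argument is more self-contained and elementary, needing nothing beyond monotonicity of the one-step maps, and it pins down the minimal process concretely as the orbit of the least fixed point; the paper's construction is less streamlined but does extra work that is reused later, namely it identifies the compatible processes with the output of the algorithmic (tour-based) procedure, which is exactly the link exploited in the proof of Theorem \ref{thm:2algo}. If you adopted your proof in place of the paper's, that identification would still need to be recorded somewhere.
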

\begin{proof}
The second reformulation above can also be used to give a process $Q$
which is compatible with a given set of marks $b$. We generate this $Q$ by considering
separately the contribution of each customer in the system. 
Suppose that a given customer arrives at time $i$ and departs at time $j$. This customer contributes 1 to the queue lengths at times $Q_{i+1}, Q_{i+2},\dots, Q_{j-1}, Q_j$. 
In addition, suppose that the match between this customer and the service at $j$ was rejected of $r$ times before being accepted. (In that case, the algorithm has done $j$ ``complete 
tours'' of the ring in the process of assigning the departure time of this customer).
Then this gives a further contribution of $r$ to the queue-length $Q_{i'}$ at all times $i'$.

We claim that summing up these contributions from each customer gives the desired queue-length process.
Each arrival gives an increase of 1 in the queue-length, and each departure gives a decrease of 1.
Further, each time that a service at $i$ is unused by a customer, that customer gives a 
contribution of 1 to the queue-length at $i$; as a result, if the service remains unused at the end of the procedure, we know that $Q_i \leq b_i $. In addition,
any departing customer still contributes at the moment of departure; if a service at $i$ is used for a departure after being rejected $b_i $ times, 
we know that $Q_i \geq b_i +1$. From these properties it follows 
$Q$ is a valid queue-length process for $(A,S)$, and also the $Q$ 
is compatible with the marks $b$. 

Hence there is at least one compatible queueing process. Also Lemma \ref{twoQlemma} states that any two such processes differ by a constant. Finally, all processes are non-negative. So indeed a minimal such process exists. 
\end{proof}


\begin{lemma}\label{conditionallemma}
Let $k_2=|\cS|-\sum \ind(A_i=1)$ be the number of second-class particles. 
Let $Q$ and $Q'$ be valid queue-length processes. 
\begin{equation}
\label{pairprobability}
\PP(Q'\sim B | Q=\Qmin(B))=
\begin{cases}
q^{mk_2} &\text{ if } m\in\{0,1,2,\dots\} \text{ and } Q'_{i}=Q_i +m \text{ for all } i;
\\
0 &\text{ otherwise}.
\end{cases}
\end{equation}
\end{lemma}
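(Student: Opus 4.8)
The plan is to reduce everything to a statement about independent geometric marks sitting at the unused-service times, using Lemma \ref{twoQlemma} to pin down the support and then exploiting the product structure of the marks.

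First, by Lemma \ref{twoQlemma}, any two valid queue-length processes compatible with the same marks differ by a constant. Hence on the event $\{Q=Q_{\min}(B)\}$, if in addition $Q'\sim B$, then $Q'=Q+c$ for some integer $c$; and since $Q=Q_{\min}(B)$ is the \emph{minimal} compatible process while $Q'$ is compatible, necessarily $c\geq 0$. This disposes of the vanishing case at once: unless $Q'_i=Q_i+m$ for all $i$ with $m\in\{0,1,2,\dots\}$, the joint event $\{Q'\sim B\}\cap\{Q=Q_{\min}(B)\}$ is empty. It then remains to treat $Q'=Q+m$ with $m\geq 0$, and to show the conditional probability equals $q^{mk_2}$.

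Next I would classify the service times $i\in\cS$ according to $D=D(A,S,Q)$, writing $\cS$ as the disjoint union of the matched sites $M=\{i:A_i=1\}$, the unmatched-departure sites $E=\{i:A_i=\infty,\,D_i=1\}$, and the unused-service sites $U=\{i:A_i=\infty,\,D_i=2\}$. Summing the recursion $Q_{i+1}-Q_i=\ind(A_i=1)-\ind(D_i=1)$ around the ring gives exactly $k_1$ departures, so $|U|=|\cS|-k_1=k_2$. Reading off the compatibility conditions \reff{consistencycondition}, the event $\{Q\sim B\}$ factorizes as $\{B_i\leq Q_i-1\ \forall i\in E\}\cap\{B_i\geq Q_i\ \forall i\in U\}$, with the marks at $M$ left free. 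The crucial structural point is that the minimality of $Q$ decouples from the unused-service marks: parametrizing compatible processes as $Q+c$, the process $Q+c$ is compatible with $B$ iff $B_i\leq Q_i+c-1$ for $i\in E$, $B_i\geq Q_i+c$ for $i\in U$, and $Q_i+c\geq 0$ for all $i$. Only the $E$-constraints and the non-negativity constraint bound $c$ from below, so the minimal admissible shift is $c_{\min}=\max\big(\max_{i\in E}(B_i-Q_i+1),\,-\min_i Q_i\big)$, which depends on $B$ only through the marks at $E$. Consequently $\{Q=Q_{\min}(B)\}=\{c_{\min}=0\}$ is measurable with respect to $(B_i)_{i\in E}$ together with the $U$-constraint $\{B_i\geq Q_i\ \forall i\in U\}$; since $(B_i)_{i\in E}$ and $(B_i)_{i\in U}$ are independent, conditioning on $\{Q=Q_{\min}(B)\}$ leaves the $U$-marks distributed as independent geometrics subject only to $\{B_i\geq Q_i\ \forall i\in U\}$.

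Finally I would carry out the computation. On $\{Q=Q_{\min}(B)\}$, since $m\geq 0$ the $E$-constraints for $Q+m$ are implied by those already in force for $Q$, so $\{Q'\sim B\}$ collapses to $\{B_i\geq Q_i+m\ \forall i\in U\}$. Using $\PP_B(B_i\geq t)=q^t$ for the geometric marks and independence across $U$,
\[
\PP_B\big(B_i\geq Q_i+m\ \forall i\in U \,\big|\, B_i\geq Q_i\ \forall i\in U\big)
=\prod_{i\in U}\frac{q^{Q_i+m}}{q^{Q_i}}=q^{m|U|}=q^{mk_2}.
\]
The one genuinely delicate step is the decoupling: one must verify carefully that the minimality of $Q$ constrains only the departure marks (through $c_{\min}$) and the \emph{deterministic} quantity $\min_i Q_i$, and never the unused-service marks, so that the conditioning does not distort the geometric tails at $U$. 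Everything else is bookkeeping on the four cases of the weight and the elementary geometric tail identity.
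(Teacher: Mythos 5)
Your proof is correct and follows essentially the same route as the paper: reduce to the memoryless property of the geometric marks at the $k_2$ unused-service sites, after using Lemma \ref{twoQlemma} and minimality to restrict to $Q'=Q+m$ with $m\geq 0$. The only difference is that you spell out the decoupling step — that $\{Q=\Qmin(B)\}$ factorizes into an event on the departure-site marks intersected with $\{B_i\geq Q_i\}$ on the unused-service sites, so the conditioning does not distort the geometric tails at $U$ — which the paper's proof uses implicitly.
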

\begin{proof}
Suppose $Q=\Qmin(B)$. From Lemma \ref{twoQlemma}, 
if also $Q'\sim B$ then $Q'$ and $Q$ must differ by a constant, say $Q'_{i}=Q_i +m$
for all $i$, and since $Q$ is minimal, we must have $m\geq 0$. 
So it's enough to show that in that case $\PP(Q'\sim B|Q=\Qmin(B))=q^{mk_2}$.

From (\ref{consistencycondition}),
we have that if $Q=\Qmin(B)$, then $B_i\geq Q_i$ for all $i\in\cS$
such that $A_i=\infty$ and $D_i=2$, 
and further that $B\sim Q'$ also holds iff 
$B_i \geq Q_i +m$ for all such $i$.
For each such $i$, we have $\PP(B_i \geq Q_i +m|B_i \geq Q_i )=q^m$,
since $B_i $ is geometric with parameter $q$.  
Since the variables $B_i$ are independent, and
since there are $k_2$ such $i$, the 
overall conditional probability equals $q^{mk_2}$ as required.
\end{proof}

At this point we can deduce the fact that the total weight of all processes $Q$ 
depends on $A$ and $S$ only through the total number $k_1$ of arrivals and $k_2$
of services, which was an important element of the argument in Section \ref{sec:ringproofs}:
\begin{proof}[Proof of Lemma \ref{lem:denominator2}]
\begin{align}
\nonumber
\sum_{Q'} W(Q'|A,S)
&=\sum_{Q'}\PP(Q'\sim B)\\
\nonumber
&=\sum_{Q', Q}\PP(Q=\Qmin(B))\PP(Q'\sim B|Q=\Qmin(B))\\
&=\sum_{Q}\PP(Q=\Qmin(B))\sum_{m=0}^\infty q^{mk_2} \text{\,\,\,\,\,\,(by (\ref{pairprobability}))}
\label{msum}
\\
\nonumber
&=(1-q^{k_2})^{-1}\sum_Q\PP(Q=\Qmin(B))\\
\nonumber
&=(1-q^{k_2})^{-1}.
\end{align}
\end{proof}

\begin{proof}[Proof of Theorem \ref{thm:2algo}]
We wish to show that the distribution on configurations 
obtained from Algorithm \ref{2algo} (or either of the variants 
described before Lemma \ref{Qminlemma}) 
is the same as that given by 
(\ref{weightdef}) and (\ref{2statring}).
Fix $A$ and $S$ and for convenience write $w(Q)=w(Q|A,S)$
and $D(Q)=D(A,S,Q)$.

The distribution at (\ref{weightdef}) and (\ref{2statring}) amounts to the following: choose $Q$ in proportion to the weight $w(Q)$, and then take $D(Q)$. 

As observed at (\ref{Wdev}), $w(Q)=\PP_B(Q\sim B)$. 
We now decompose this weight by writing
$w(Q)=\sum_{Q'}w(Q,Q')$, where we define $w(Q,Q')=\PP(Q\sim B, Q'= \Qmin(B))$.
For all $Q$ and $Q'$ such that this weight is positive,
Lemma \ref{twoQlemma} gives 
that $Q$ and $Q'$ differ by a constant and that $D(Q)=D(Q')$.
Hence equivalent to 
(\ref{weightdef}) and (\ref{2statring})
is the following: choose the pair $Q$ and $Q'$ in proportion 
to the weight $w(Q,Q')$, and then take $D(Q')$.

Next we write $\tw(Q')=\sum_Q w(Q,Q')$. Then we have another equivalent version: choose $Q'$ in proportion to the weight $\tw(Q')$,
and take $D(Q')$. 

Now
\begin{align}
\nonumber
\tw(Q')
&=\sum_Q \PP_B(Q\sim B, Q'=\Qmin(B))\\
\nonumber
&=\PP_B(Q'=\Qmin(B))\sum_Q \PP_B(Q\sim B|Q'=\Qmin(B))\\
\nonumber
&=\PP_B(Q'=\Qmin(B))(1+q^{k_2}+q^{2k_2}+\dots)
\text{\,\,\,\,\,\,(using Lemma \ref{pairprobability})}\\
\label{constant}
&=\frac1{1-q^{k_2}}\PP_B(Q'=\Qmin(B)).
\end{align}

Since $1/(1-q^{k_2})$ is a constant, we have another equivalent version: choose $Q'$ with probability proportional to
the quantity $\PP_B(Q'=\Qmin(B))$ and take $D(Q')$.
But of course this is the same as the following;
generate a sample of the weights $B$, and take 
$D(\Qmin(B))$. 

Finally, using Lemma \ref{twoQlemma} again,
we may do the following:
generate $B$, choose \textit{any} $Q$ such that $Q\sim B$, and take the departure process $D(Q)$. But, by the argument preceding Lemma 
\ref{Qminlemma} above, this is equivalent to what Algorithm \ref{2algo} does. 

Hence indeed Algorithm \ref{2algo} leads to the same distribution as 
(\ref{weightdef}) and (\ref{2statring}), as desired.
\end{proof}

This completes the proof of the two-type result
in Theorem \ref{thm:2algo}. The structure of the 
proof of the multi-type result in Theorem \ref{thm:multialgo} is entirely analogous. We outline the generalisation of the argument.

As before, we consider marks $B_i $ at each of the service times $i$,
which are i.i.d.\ geometric with parameter $q$. The mark $B_i $
represents the number of times that the service at time $i$ may 
be rejected. A multi-type process $Q$ is compatible with a set of marks $b$
(for which we write $Q\sim B$) 
if each of its embedded one-type queues is compatible with $b$. 
That is,
for each $n$ and for each $i$ such that $S_i =1$, one of the following conditions 
holds, just as at (\ref{consistencycondition}):
\begin{align}
\nonumber
&\bullet\,\, A^{(\leq n)}(i)=1;\\
\label{multiconsistencycondition}
&\bullet\,\, A^{(\leq n)}(i)=0,\, D^{(\leq n)}(i)=0 \text{ and } B_i \geq Q^{(\leq n)}(i);\\
\nonumber
&\bullet\,\, A^{(\leq n)}(i)=0,\, D^{(\leq n)}(i)=1 \text{ and } B_i <Q^{(\leq n)}(i).  
\end{align}

Just as at (\ref{Wdev}), we again have
\begin{equation}
W(Q|A,S)=\PP_B(Q\sim B).
\end{equation}
and analogously to Lemma \ref{twoQlemma}, we have
\begin{lemma}
Suppose that two queue-length processes $Q$ and $Q'$ are both compatible with 
the same set of marks $b$. Then 
$Q$ and $Q'$ differ by a constant vector, 
in the sense that for some $m_1, m_2, \dots m_{N-1}$,
one has $Q'^{(\leq n)}(i)=Q^{(\leq n)}(i)+m_n$ for all $n$ and $i$. 
In particular, all the departure 
processes $D^{(\leq n)}$ are identical 
in $Q$ and in $Q'$. 
\end{lemma}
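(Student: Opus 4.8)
The plan is to reduce this multi-type statement to $N-1$ separate invocations of the one-type Lemma \ref{twoQlemma}, one for each projection onto the customers of type at most $n$. The guiding observation is that the three cases of the multi-type compatibility condition (\ref{multiconsistencycondition}), read for a fixed index $n$, are precisely the three cases of the one-type compatibility condition (\ref{consistencycondition}) for a single queue whose customers are those of type $\leq n$, whose queue-length is $Q^{(\leq n)}$, and whose marks are the same $B_i$.

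First I would fix $n\in\{1,\dots,N-1\}$ and assemble the projected queue. Summing the queue-length recursion over types $1,\dots,n$ gives
\[
Q^{(\leq n)}_{i+1}-Q^{(\leq n)}_i=\ind(A_i\leq n)-\ind(D_i\leq n),
\]
which is exactly the one-type recursion of (\ref{R2def}) with arrival indicator $\ind(A_i\leq n)$, service process $S$, and departure indicator $D^{(\leq n)}(i)=\ind(D_i\leq n)$. I would then check that this projected triple lies in $\cR^{(2)}$: the recursion is automatic, the implication $S_i=\infty\Rightarrow D_i=\infty$ forces no departure of type $\leq n$, and the only substantive point is that when $S_i=1$ and an arrival of type $\leq n$ occurs, a departure of type $\leq n$ must occur. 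This last point is forced by the priority rule $D_i\leq A_i$ in (\ref{RNdef}), which gives $D_i\leq A_i\leq n$. I expect this verification — that the priority discipline makes each level-$n$ projection a genuine one-type queue of the kind treated in Lemma \ref{twoQlemma} — to be the crux of the argument, although it is essentially bookkeeping once the summed recursion and the inequality $D_i\leq A_i$ are in hand.

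With the projection in place, I would match the three bullets of (\ref{multiconsistencycondition}) against those of (\ref{consistencycondition}): an arrival of type $\leq n$ corresponds to the first bullet; the case $D^{(\leq n)}(i)=0$ with $B_i\geq Q^{(\leq n)}(i)$ corresponds to an unused service with $Q_i\leq b_i$; and $D^{(\leq n)}(i)=1$ with $B_i<Q^{(\leq n)}(i)$ corresponds to a departure with $Q_i>b_i$. Hence $Q\sim b$ forces $Q^{(\leq n)}$ to be compatible with $b$ as a one-type queue, and likewise for $Q'$. Applying Lemma \ref{twoQlemma} to this projection then produces a constant $m_n$ with $Q'^{(\leq n)}(i)=Q^{(\leq n)}(i)+m_n$ for all $i$, and gives that the two projected departure processes agree, i.e.\ $D^{(\leq n)}=D'^{(\leq n)}$. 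Running this over all $n=1,\dots,N-1$ yields exactly the claimed constant-vector difference and the equality of every $D^{(\leq n)}$; the full departure processes then coincide as well, since $D_i$ is recovered as the smallest $n$ with $D^{(\leq n)}(i)=1$ (and as $N$ or $\infty$ according to whether $S_i=1$ or $S_i=\infty$ when no such $n$ exists).
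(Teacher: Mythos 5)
Your proposal is correct and is essentially the argument the paper intends: the paper defines multi-type compatibility precisely via the embedded one-type queues $Q^{(\leq n)}$ and states this lemma ``analogously to Lemma \ref{twoQlemma}'' without writing out details, and your projection argument --- checking via the summed recursion and the priority rule $D_i\leq A_i$ that each level-$n$ projection is a valid one-type queue compatible with the same marks, then invoking Lemma \ref{twoQlemma} for each $n$ --- fills in exactly those details. The recovery of $D$ from the family $D^{(\leq n)}$ at the end is also handled correctly.
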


The equivalent of Lemma \ref{Qminlemma} also holds, 
so that for any set of marks $b$ there 
exist queueing processes compatible with $b$ and among them a
minimal process
$\Qmin(b)$
among those queueing processes compatible with the marks $b$.
Here the process $\Qmin$ is minimal in the sense
that for any other compatible $Q$ and any $n$ and $i$,
$\Qmin^{(\leq n)}(i)\leq Q^{(\leq n)}(i)$. 
(It may not necessarily be the case that $\Qmin^{(n)}(i)\leq Q^{(n)}(i)$).

Indeed, the multi-type $\Qmin$ can be obtained by taking
each $\Qmin^{(\leq n)}$ to be 
the minimal process for the embdedded one-type queue, as given by 
Lemma \ref{Qminlemma}.

The following generalisation of Lemma \ref{conditionallemma}
then holds:
\begin{lemma}\label{multiconditionallemma}
Let $A$ be an arrival process with particle counts 
$k_1,\dots, k_{N-1}$. Let $S$ be a service process
with $k_1+\dots+k_N$ services. 
Let $Q$ and $Q'$ be two valid queue-length processes. 
If there exist $m_n\geq 0$, $1\leq n\leq N-1$ such 
that for all $n$ and $i$,
\[
Q'^{(\leq n)}(i)=Q^{(\leq n)}(i)+m_n
\]
then
\[
\PP_B(Q'\sim B|Q=\Qmin(B))=q^{m_1k_2+m_2k_3+\dots+m_{N-1}k_N}.
\]
Otherwise $\PP_B(Q'\sim B|Q=\Qmin(B))=0$. 
\end{lemma}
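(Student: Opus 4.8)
The plan is to reduce the multi-type statement to the one-type computation of Lemma~\ref{conditionallemma} applied separately to each of the $N-1$ embedded one-type queues. The key structural fact is that the marks $B_i$ are shared across all the embedded queues, while the compatibility conditions \reff{multiconsistencycondition} decouple level-by-level: for each fixed $n$, the condition only involves the aggregate quantities $A^{(\leq n)}(i)$, $D^{(\leq n)}(i)$ and $Q^{(\leq n)}(i)$, which are exactly the arrival, departure and queue-length processes of the $n$th embedded one-type queue. So the first step is to record that $Q\sim B$ holds if and only if, for every $n\in\{1,\dots,N-1\}$, the one-type process $Q^{(\leq n)}$ is compatible with $B$ in the sense of \reff{consistencycondition}; this is just the definition, but stating it cleanly is what makes the reduction run.

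Next I would invoke the remark that $\Qmin(b)$ is obtained by taking $\Qmin^{(\leq n)}$ to equal the one-type minimal process for the $n$th embedded queue, as noted just before the statement. Assuming $Q=\Qmin(B)$, so that $Q^{(\leq n)}=\Qmin^{(\leq n)}(B)$ is minimal at every level, and assuming the hypothesised relation $Q'^{(\leq n)}(i)=Q^{(\leq n)}(i)+m_n$ with each $m_n\ge 0$, I would apply Lemma~\ref{conditionallemma} to each level. There is a bookkeeping point about what plays the role of ``$k_2$'' at level $n$: the number of unused services \emph{at that level}, i.e.\ the services that are offered but rejected by all customers of type $\le n$, is $(k_1+\dots+k_N)-(k_1+\dots+k_n)=k_{n+1}+\dots+k_N$ in total across all levels, but the quantity appearing in the one-type formula for level $n$ is the count of second-class particles \emph{of the $n$th embedded two-type system}, which is the number of services minus the number of type-$\le n$ arrivals, namely $(k_1+\dots+k_N)-(k_1+\dots+k_n)$. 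I must be careful to confirm that the $m_n$-dependence attaches the exponent $m_n$ to precisely $k_{n+1}$ and not to a partial sum; this follows because at level $n$ the relevant increment in queue-length is $m_n$ and the number of relevant services is $k_{n+1}$ (the genuinely new second-class particles entering at level $n$), the higher types being accounted for at their own levels.

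For the final step I would argue independence. Fix a service time $i$ with $A_i=\infty$; the mark $B_i$ is constrained, across the various levels, by the conditions of the form $B_i\ge Q^{(\leq n)}(i)$ coming from those $n$ at which an unused service occurs. Because the $Q^{(\leq n)}(i)$ are nested and $B_i$ is a single geometric variable, the simultaneous conditional probability $\PP_B(Q'\sim B\mid Q=\Qmin(B))$ factorises over the $N-1$ levels once one observes that the conditioning events at distinct levels involve \emph{disjoint} sets of service times (a given service becomes ``unused at level $n$'' for exactly one value of $n$). Combining this disjointness with the per-level factor $q^{m_nk_{n+1}}$ from Lemma~\ref{conditionallemma} gives the product $q^{m_1k_2+m_2k_3+\dots+m_{N-1}k_N}$. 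The ``otherwise'' case is immediate from the multi-type analogue of Lemma~\ref{twoQlemma}, which forces any compatible $Q'$ to differ from $\Qmin(B)$ by a constant vector with non-negative entries.

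\textbf{Main obstacle.} The delicate point is the disjointness/independence claim in the last step: one must verify that the constraints on a single mark $B_i$ coming from different levels do not overlap in a way that breaks the clean product, and in particular that each service time contributes its $m_n$-exponent to exactly one level $n$. Making this precise---identifying, for each service time, the unique level at which it is ``newly'' an unused service---is where the real content lies, and it is worth spelling out rather than dismissing as routine, since it is exactly the step that turns $N-1$ copies of Lemma~\ref{conditionallemma} into the single product formula.
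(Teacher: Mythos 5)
Your reduction to the embedded one-type queues is the natural starting point, and the paper itself offers little more than that (the lemma is presented simply as ``the generalisation of Lemma \ref{conditionallemma}''), so essentially all of the content lies in the step you defer to the end --- and that step, as you sketch it, does not go through. The parenthetical claim that ``a given service becomes unused at level $n$ for exactly one value of $n$'' is false. Fix a service time $i$ with $S_i=1$ and departure type $d=D_i$. Reading off (\ref{multiconsistencycondition}), the \emph{single} mark $B_i$ is constrained at every level: $B_i\geq Q^{(\leq n)}(i)$ for each $n<d$ (the service is rejected by all customers of priority $\leq n$ for \emph{every} such $n$, not just one), and $B_i<Q^{(\leq n)}(i)$ for each $n$ with $d\leq n<\min(A_i,N)$. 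So the constraints coming from different levels sit on the same geometric variable and are strongly dependent; using monotonicity of $n\mapsto Q^{(\leq n)}(i)$ they collapse to the two-sided window $a\leq B_i<b$ with $a=Q^{(\leq d-1)}(i)$, $b=Q^{(\leq d)}(i)$, and for $Q'$ to the shifted window $a+m_{d-1}\leq B_i<b+m_d$. A truncated geometric is not memoryless: the conditional probability of the shifted window given the unshifted one is $\bigl(q^{a+m_{d-1}}-q^{b}\bigr)\big/\bigl(q^{a}-q^{b}\bigr)$ (or $0$), which is not a single power of $q$ and depends on $a$ and $b$, i.e.\ on $Q$ itself. This two-sided computation is exactly what your argument needs and does not contain.

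The bookkeeping worry you raise is a symptom of the same problem rather than a separate issue. Applying Lemma \ref{conditionallemma} literally to the level-$n$ embedded queue gives the exponent $m_n(k_{n+1}+\cdots+k_N)$, since that queue has $k_{n+1}+\cdots+k_N$ unused services; multiplying these factors over $n$ would yield $q^{\sum_n m_n(k_{n+1}+\cdots+k_N)}$ rather than the claimed $q^{\sum_n m_n k_{n+1}}$, and the entire burden of reconciling the two falls on the disjointness/independence claim, which is the part that is false. To complete an argument along these lines you would need to group the service times by departure type $d$, work out the conditional law of $B_i$ given $\{Q=\Qmin(B)\}$ on the windows above, and track how the two shifts $m_{d-1}$ and $m_d$ interact on each window; none of this follows from $N-1$ independent invocations of the one-type lemma, and as it stands your proposal does not establish the stated formula.
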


At this point we have a proof of Lemma \ref{lem:denominatorN}:

\begin{proof}[Proof of Lemma \ref{lem:denominatorN}]
Lemma \ref{lem:denominatorN} follows from Lemma \ref{multiconditionallemma}
in just the same way that Lemma \ref{lem:denominator2} followed from Lemma
\ref{conditionallemma}. In place of the sum over $m$ in (\ref{msum}), 
we will have a sum over $m_1, m_2, \dots, m_{N-1}$.
\end{proof}

To complete the proof of Theorem \ref{thm:multialgo},  
start from $w(Q|A,S)=\PP_B(Q\sim B)$.
Define again $w(Q,Q')=\PP_B(Q\sim B, Q'=\Qmin(B))$
and then $\tW(Q')=\sum_Q w(Q,Q')$. The
equivalent step to (\ref{constant}) is that
\begin{align*}
\tw(Q')
&=\sum_Q \PP_B(Q\sim B, Q'=\Qmin(B))\\
&=\PP_B(Q'=\Qmin(B))\sum_Q \PP_B(Q\sim B|Q'=\Qmin(B))\\
&=\PP_B(Q'=\Qmin(B))\sum_{m_1,m_2,\dots, m_{N-1}\geq0}
q^{m_1k_2+m_2k_3+\dots+m_{N-1}k_N}\\
&=\frac1{(1-q^{k_2})(1-q^{k_3})\dots(1-q^{k_N})}\PP_B(Q'=\Qmin(B)).
\end{align*}
The denominator in the final expression is constant,
and the end of the proof goes through just as before.

\subsection{Proof of the results on common denominators}
\label{sec:denominatorsproof}
As noted in Section \ref{sec:intromain},
we can generate a sample from the $N$-type ASEP on $\ZZ_L$
by applying Algorithm \ref{multialgo} $N-1$ times in all. 
The $n$th iteration takes as arrival process a configuration
whose distribution is stationary for an $n$-type ASEP,
along with an independent service process, and outputs a 
configuration whose distribution is stationary for an 
$(n+1)$-type ASEP. The $N-1$ iterations can be combined
into a ``multi-line" diagram with $N$ lines. In particular,
the bottom line of the diagram gives a sample from an $N$-type
system.

We can identify two sources of randomness within this procedure;
the choice of the configuration of occupied and unoccupied sites on the lines of the multi-line diagram, and the assignment of types to
the occupied sites. The set of occupied sites on line $n$ 
is chosen uniformly from all subsets of $\ZZ_L$ of size 
$k_1+\dots+k_n$, independently for different lines; hence overall
the configuration is uniformly chosen from 
\begin{equation}\label{sitechoices}
\Ch{L}{k_1}
\Ch{L}{k_1+k_2}
\dots
\Ch{L}{k_1+k_2+\dots+k_N}.
\end{equation}
possibilities. 

To assign types to particles in line $r$, given the types in line $r-1$,
we use Algorithm \ref{multialgo}. 
We treat the arrivals in 
order of their type
(within a single type, the order in which we treat
the arrivals is irrelevant -- for example, 
we can work from left to right).
If $m$ of the arrivals (from line $r-1$)
have been assigned to service-times (on line $r$), then there remain
$k_1+\dots+k_{r}-m$ services. When we come to assign the $(m+1)$st arrival,
there are two cases. Either there is an unassigned service immediately below it, in which it is assigned there. Otherwise, the choice between the remaining services is done according to a distribution with weights
\begin{equation}
\label{typechoices}
\frac{1}{1+q+\dots+q^{k_1+\dots+k_{r}-m-1}}
\left(
1,q,\dots, q^{k_1+\dots+k_{r}-m-1}
\right).
\end{equation}

Once the set of occupied sites is fixed, 
the probability of obtaining
any particular multiline diagram is given by a product
of terms of the form (\ref{typechoices}),
where $r,m$ vary over 
$1\leq m\leq k_1+k_2+\dots+k_{r-1}$.
To obtain the polynomial (\ref{generaldenom})
we take the product over such $r,m$ of the denominator $[k_1+\dots+k_r-m-1]_q$ from
(\ref{typechoices}), and multiply by (\ref{sitechoices}).
Every numerator term in (\ref{typechoices}) is a
power of $q$, so indeed the probability of any multiline 
diagram is a polynomial in $q$ with non-negative
integer coefficients divided by the polynomial 
(\ref{generaldenom}). But the bottom line 
of the multiline diagram gives a sample
from the stationary distribution of the $N$-type ASEP. 
So the stationary probabilities are sums of 
probabilities of multiline diagrams, and so themselves
have the same form, giving Theorem \ref{thm:denominators}.

For Corollary \ref{cor:Ldenominators}, we consider
an $(L-1)$-type system with $k_r=1$ for $r=1,2,\dots, L-1$.
(The single hole plays the role of the particle of
type $L$.) In this case (\ref{generaldenom})
reduces to (\ref{Ltypedenom}). 

\section{Systems on the line}
\label{sec:Z}
Now we want to consider systems in which the set of sites
is given by the whole integer line $\ZZ$ rather than the ring
$\ZZ_L$. 

Given $\lambda_1, \dots, \lambda_N$ with 
$\sum \lambda_i<1$, there is a unique
ergodic translation-invariant stationary distribution
where the density of particles of type $i$ is $\lambda_i$,
as shown by Ferrari, Kipnis and Saada
\cite{FerrariKipnisSaada}.

We want to prove Theorem \ref{thm:2statline} for the
2-type case and Theorem \ref{thm:Nstatline} for the multi-type case. In each case we will prove stationarity for the measure on the whole line by showing that it is the limit of the
stationary distribution of systems on finite rings. 

A probability distribution $\nu$ on 
$\{1,2,\dots, N, \infty\}^\ZZ$ is characterised 
by its values on cylinder events, i.e.\ those which depend
on some finite window $\{-m, \dots, m\}\subset\ZZ$. 

For $T\in\NN$, consider systems on the ring of size $L=2T$,
with the sites labelled as $-T, \dots, T-1$. 
Let $\nu^{2T}$ be any sequence of stationary distributions 
for these systems. If $C$ is a cylinder event which depends on a set of sites $-m, \dots, m$, then we can consider
the probability of $C$ under $\nu^{2T}$ for any $T>m$. 

\begin{proposition}\label{prop:cylinder}
If the distribution $\nu$ satisfies $\nu^{2T}(C)\to\nu(C)$
as $T\to\infty$ for every cylinder event $C$, then 
$\nu$ is a stationary distribution for the system on $\ZZ$.
\end{proposition}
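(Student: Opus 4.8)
The plan is to use the standard characterisation of stationarity via the generator: a probability measure $\nu$ on $\{1,\dots,N,\infty\}^\ZZ$ is stationary for the ASEP on $\ZZ$ if and only if $\int \cL f\,d\nu=0$ for every local function $f$ (one depending on only finitely many coordinates), since such functions form a core for the generator $\cL$. Here $\cL$ acts by
\[
\cL f(\eta)=\sum_{i\in\ZZ} c_i(\eta)\big[f(\eta^{i,i+1})-f(\eta)\big],
\]
where $\eta^{i,i+1}$ exchanges the values at $i$ and $i+1$, and $c_i(\eta)$ equals $1$ if $\eta(i)>\eta(i+1)$, equals $q$ if $\eta(i)<\eta(i+1)$, and equals $0$ otherwise. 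The first thing to record is that if $f$ is supported on a window $\{-m,\dots,m\}$ then $f(\eta^{i,i+1})-f(\eta)=0$ unless the edge $\{i,i+1\}$ meets that window, so the sum is finite and $\cL f$ is itself a bounded local function, supported on $\{-m-1,\dots,m+1\}$.

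Next I would fix such an $f$ and compare the two generators on a finite window. For the ring $\ZZ_{2T}$ with sites $-T,\dots,T-1$, the generator $\cL^{2T}$ has the same form but with the sum running over the $2T$ ring-edges, including the wrap-around edge joining $T-1$ and $-T$. The only edges contributing to $\cL f$ are those $\{i,i+1\}$ with $i\in\{-m-1,\dots,m\}$; for $T\geq m+2$ every one of these is an ordinary edge of the ring, disjoint from the wrap-around edge, and the transition rates coincide. Hence $\cL^{2T}f=\cL f$ as functions for all $T\geq m+2$. Since $\nu^{2T}$ is stationary for the finite-state chain on the ring, $\int \cL^{2T}g\,d\nu^{2T}=0$ for every $g$; taking $g=f$ gives $\int \cL f\,d\nu^{2T}=0$ for all $T\geq m+2$.

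Finally I would pass to the limit. Because $\cL f$ is a bounded function depending on the finite set of sites $\{-m-1,\dots,m+1\}$, and each site takes values in the finite set $\{1,\dots,N,\infty\}$, it is a finite linear combination of indicators of cylinder events; the hypothesis $\nu^{2T}(C)\to\nu(C)$ therefore yields $\int \cL f\,d\nu^{2T}\to\int \cL f\,d\nu$. Combining with the previous step gives $\int \cL f\,d\nu=0$, and as $f$ was arbitrary this proves $\nu$ is stationary on $\ZZ$. The only genuinely delicate point is the generator-matching step: one must verify that the ring's wrap-around edge contributes nothing to $\cL f$ once $T$ exceeds the window size, so that no boundary term survives and the limit is taken cleanly. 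The remaining ingredient---that checking $\int \cL f\,d\nu=0$ on local functions suffices---is standard for the ASEP and can be cited.
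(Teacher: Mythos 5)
Your proof is correct, but it takes a genuinely different route from the paper. You argue analytically through the generator: local functions form a core for $\cL$, stationarity of $\nu^{2T}$ on the ring gives $\int \cL^{2T} f\, d\nu^{2T}=0$, the ring and line generators agree on a local function once $T$ exceeds its support (so the wrap-around edge contributes nothing), and cylinder convergence passes the identity $\int \cL f\,d\nu^{2T}=0$ to the limit. All of these steps check out: for a nearest-neighbour exclusion-type process with bounded rates on the compact space $\{1,\dots,N,\infty\}^{\ZZ}$, the facts that cylinder functions are a core and that $\nu$ is invariant if and only if $\int\cL f\,d\nu=0$ on a core are standard (Liggett's book covers exactly this setting), and your bookkeeping of which edges can contribute is right. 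The paper instead works probabilistically at the level of the semigroup over a fixed time horizon $[0,t]$: it couples initial states of the ring and line systems to agree on a large window $\{-M,\dots,M\}$, and then uses a finite-speed-of-propagation argument (with high probability some Poisson clock between $-M$ and $-m$ and another between $m$ and $M$ never rings on $[0,t]$, sealing off the inner interval) to conclude that the time-$t$ laws agree on $\{-m,\dots,m\}$; stationarity of $\nu^{2T}$ then forces the time-$t$ law of the line system to be $\nu$ on every cylinder. Your version is shorter and cleaner but leans on the Hille--Yosida/core machinery as a cited black box; the paper's version is self-contained and elementary, at the cost of an explicit graphical coupling. Either argument is a complete proof of the proposition.
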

\begin{proof}
Let $C$ depend on the sites $-m, \dots, m$ 
and fix any $t>0$. Consider a system on the ring 
of size $2T$ started from some state 
$\eta_0^{(T)}\sim \nu^{2T}$ at time $0$, and a system on $\ZZ$ 
started from some state $\eta_0\sim \nu$ at time $0$.

Since $\nu^{2T}\to\nu$ on cylinder events, for any given $M$,
whenever $T$ is large enough,
we can couple $\eta_0^{(T)}$ and $\eta_0$ so that
the probability that they are identical on the window 
$\{-M, -M+1, \dots, M\}$ is as close to $1$ as desired. 

Meanwhile, if we choose $M$ large enough, we can couple
the evolutions on the finite time interval $[0,t]$
in such a way that if the time-$0$ states are identical
on the window $\{-M,\dots,M\}$ then (uniformly in the time-$0$ state)
the time-$t$ states are identical on the smaller 
window $\{-m, \dots, m\}$ with probability as close to $1$
as desired. (This can be achieved by a simple local coupling 
of the dynamics. We omit the details but a straightforward approach is as follows: the process of jumps between sites
$i$ and $i+1$ can be dominated by independent Poisson 
processes $P_x$ of rate $1+q$. For any given $i$, the probability that $P_i$ has no point in the time interval $[0,t]$ is
$e^{-t(1+q)}$; hence with high probability as $M\to\infty$, we can find such sites $i_0\in\{-M, \dots, -m-1\}$ and 
$i_1\in\{m, \dots, M-1\}$. Then one can couple 
such that in both systems, no particle enters or leaves the
interval $\{i_0+1, \dots, i_1\}$ during $[0,t]$, and such 
that inside that interval, the evolutions of the two systems
are identical. In particular, the two evolutions coincide
on the window $\{-m, \dots, m\}$.)

Hence, restricted to $-m,\dots,m$, 
the distribution of the time-$t$ state in the infinite system
is arbitrarily close to that of the time-$t$ of the system on the ring. 
But on the ring, the distribution of the time-$t$ state is 
$\nu^{2T}$, since $\nu^{2T}$ is stationary. Also by choosing large enough $T$, $\nu^{2T}$ 
can be made to agree arbitrarily closely with $\nu$ on the window $-m,\dots,m$. Hence the probability of the event $C$ occurring at time $t$ in the system on $\ZZ$ must be $\nu(C)$. This holds for all cylinder events $C$, so 
indeed the distribution at time $t$ is $\nu$; hence $\nu$ is a 
stationary distribution as required.
\end{proof}

Now we turn to the queueing Markov chain. We will
give it a rather explicit construction, involving data
like the ``marks" that we used in the previous section
when justifying Algorithms \ref{2algo} and \ref{multialgo}.

We will start by giving complete details in the case $N=2$, 
where the chain consists of a single queue. Then we will indicate the extension to larger $N$ (where the chain consists of several queues in tandem, and so the details are somewhat more complicated to write down in full, although entirely analogous).

\subsection{$N=2$ (single-type queue)}
First we introduce a rather concrete representation
of the Markov chain used in Theorem \ref{thm:2statline}.
We consider random vectors $R_i =(A_i , S_i , B_i ), i\in\ZZ$,
which control the evolution of the queue. All entries are independent, with
\begin{gather*}
\PP(A_i=1)=1-\PP(A_i=\infty)=\lambda,\\
\PP(S_i=1)=1-\PP(S_i=\infty)=\mu,\\
B_i \text{ geometric with parameter } q.
\end{gather*}
Here $A_i=1$ means that there is an arrival at $i$.
$S_i=\infty$ means there is no service available at $i$.
If on the other hand $S_i=1$, then there is a potential 
service available at $i$, which is used if there are at least
$B_i$ customers in the queue, or if an arrival has just occurred; otherwise it is unused. 

Writing $Q_i$ for the queue-length at the beginning of 
time-step $i$, we can write a recursion of the form
\begin{equation}
\label{fdef}
Q_{i+1}=f(Q_i , R_i )
\end{equation}
for some appropriate function $f$ (which would
be easy to write down, but the precise form is not important). This formal representation is useful because it allows us to 
couple versions of the system evolving 
from different initial conditions but using the 
``same randomness" $(R_i , i\in\ZZ)$,
and, particularly, to couple versions of the system evolving
on the whole real line $\ZZ$ or on a finite box $[-T,T]$.

Similarly, we can write the departure process defined at (\ref{2stat}) in the form
\begin{equation}
\label{gdef}
D_i=g(Q_i, R_i),
\end{equation}
again for some suitably chosen function $g$. 

\subsubsection{Cyclic evolution}
Let $G_T$ be the event that 
there are more services than arrivals 
in the finite box; specifically, that 
$\sum_{i=-T}^{T-1} \ind(A(i=1)) < \sum_{i=-T}^{T-1} \ind(S_i =1)$.
We have 
$\sum_{i=-T}^{T-1} \ind(A(i=1))\sim \Bin(2T, \lambda)$
and
$\sum_{i=-T}^{T-1} \ind(S_i =1)\sim \Bin(2T, \mu)$,
with $\lambda<\mu$, so 
\begin{equation}\label{GToccurs}
\PP(G_T)\to 1 \text{ as }T\to\infty.
\end{equation}

\newcommand{\cyc}{\textrm{cyc}}
\newcommand{\Qc}{Q^{\cyc}}
\newcommand{\Dc}{D^{\cyc}}

When $G_T$ holds, we saw in the proof of \ref{Qminlemma}
how to construct a \textit{cyclic evolution} compatible with the randomness;
that is, an evolution $\Qc_{-T}, \dots, \Qc_T$ with the property 
that $\Qc_{-T}=\Qc_T$, and such that for all $i=-T, \dots, T-1$, 
$\Qc_{i+1}=f(\Qc_i, R_i)$ as in
(\ref{fdef}) holds. In Lemma \ref{twoQlemma} we saw that any two such cyclic evolutions differ by a constant, and that furthermore 
any two such evolutions give the same output configuration
$\Dc=(\Dc_i , i\in[-T, T-1])$ (where $\Dc_i=g(\Qc_i, R_i)$ as at
(\ref{gdef})).

Conditional on the number of arrivals and services, the distribution of this configuration is stationary for the ASEP on $\ZZ_{2T}$. Let $\nu^{2T}_{\lambda, \mu-\lambda}$ be the distribution of $\Dc$, conditioned
on the event $G_T$. Then $\nu^{2T}_{\lambda, \mu-\lambda}$ is a mixture
of stationary distributions for the ASEP
(by Theorem \ref{thm:2statring}), and so is
itself stationary. 

\newcommand{\Qeq}{Q^{\textrm{eq}}}
\newcommand{\Deq}{D^{\textrm{eq}}}

\subsubsection{Coupling of cyclic evolution to an evolution on $\ZZ$}
\label{subsec:coupling}
The queue-length Markov chain 
described in Section \ref{sec:basicqueue}
has a stationary distribution
$\pi$. (This distribution is easy to obtain using
the detailed balance equations, but we don't need its
particular form here.)
Suppose $\Qeq_{-T}$ is drawn from the stationary distribution $\pi$ 
of the queue-length Markov chain, 
independently of the randomness $(B_i , A_i , S_i ), 
i=-T, \dots, T-1$.
Then define $\Qeq_i , i=-T+1, \dots, T$ 
by $\Qeq_{i+1}=f(\Qeq_i, R_i)$ as at $(\ref{fdef})$.
We call this the equilibrium evolution. 
Similarly define 
$\Deq_i, i=-T+1, \dots, T$ as at $(\ref{gdef})$. 
Then $(\Deq_i, i\in[-T, T-1])$
is distributed according to the restriction of 
$\nu^{(2)}_{\lambda,\mu-\lambda}$ to the interval
$[-T, T-1]$.

Next, define $\tQ(-T)=\Qeq(T)$, and then $\tQ_{i+1}=
f(\tQ_i , R_i )$ 
for $i=-T, \dots, T-1$. So $\tQ$ also evolves as a copy
of the queue-length chain on $[-T,T]$, using the 
same randomness as $\Qeq$, but started from the particular
initial state $\Qeq_T$. We claim that with high probability,
$\Qeq$ and $\tQ$ couple together on the interval; that 
is, they reach the same state at some point, and then
since they use the same randomness, they stay together 
for the rest of the interval also. Then $\tQ_T=\Qeq_T=\tQ_{-T}$,
and so in particular $\tQ$ is a cyclic evolution. In fact,
we aim to show that with high probability, the coupling point occurs before $-m$, so that also $\tQ_i =\Qeq_i $ for all 
$i\in[-m,m]$. See Figure \ref{fig:coupling}
for an illustration of the idea.

\begin{figure}[htb]
\resizebox{\textwidth}{!}{\input{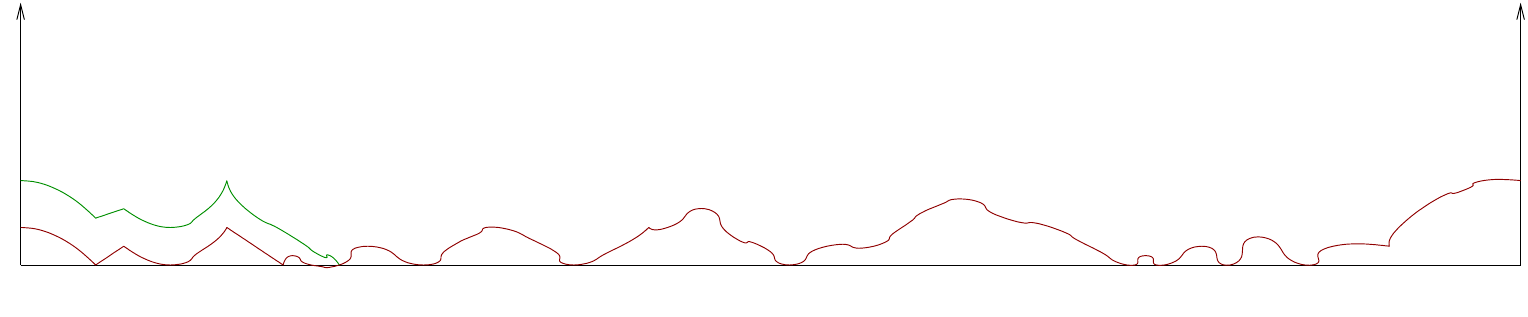_t}}
\caption{
\label{fig:coupling}
Illustration of the coupling idea in Section \ref{subsec:coupling}. 
The red curve (which starts lower) illustrates 
the Markov chain evolving in equilibrium starting from the
state $\Qeq_{-T}$, and ending at the state $Q_{T}$.
The green curve starts at the state $\tQ_{-T}:=Q_{T}$, and 
evolves using the same randomness $R_i, -T\leq i<T$ as the red curve. 
If the two evolutions meet, they stay together. In this case the 
evolution starting from $\tQ_{-T}$ is cyclic. If the coupling point is before time $-m$, then the cyclic evolution and the equilibrium evolution are identical on the window $[-m,m]$. 
We show that (for fixed $m$) this happens
with high probability as $T$ becomes large, so that the distribution 
on $[-m,m]$ for the cyclic evolution on $[-T,T]$ 
converges to that for the equilibrium evolution.}
\end{figure}

\begin{proposition}
\label{prop:coupling}
As $T\to\infty$,
\[
\PP(\Qeq_t=\tQ_t \text{ for some } t \text{ with } 
-T<t<-m) \to 1.
\]
If this event holds then $\Qeq_i=\tQ_i$ for all $t\leq i\leq T$, 
and in particular:
\begin{itemize}
\item[(i)]
$\tQ_T=\Qeq_T=\tQ_{-T}$, so that $\tQ$ is a cyclic evolution;
\item[(ii)]
$\tQ_i=\Qeq_i$ for $i\in[-m,m]$.
\end{itemize} 
\end{proposition}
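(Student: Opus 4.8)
The plan is to show that the two evolutions $\Qeq$ and $\tQ$, which are driven by the \emph{same} random data $(R_i)$, coalesce at some time in $(-T,-m)$ with probability tending to $1$; once they agree at a single time they agree at all later times (since they then feed the same $R_i$ into the same update map), which immediately yields the permanence statement and conclusions (i) and (ii). The structural fact underlying everything is that the update map in (\ref{fdef}) is monotone. Indeed, unpacking the rules of Section \ref{sec:basicqueue} together with the interpretation of the mark $B_i$, one has
\[
f(k,R_i)=
\begin{cases}
k+\ind(A_i=1) & S_i=\infty,\\
k & S_i=1,\ A_i=1,\\
k-\ind(k>B_i) & S_i=1,\ A_i=\infty,
\end{cases}
\]
which in each case is non-decreasing in $k$ with increments in $\{0,1\}$. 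Consequently, for two trajectories driven by the same $(R_i)$ the order is preserved and the gap $G_i:=|\Qeq_i-\tQ_i|$ satisfies $0\le G_{i+1}\le G_i$; in particular $G$ can reach $0$ only by descending through the integers one at a time, and once $G_t=0$ we have $G_i=0$ for all $i\ge t$. (This is the same phenomenon recorded in Lemma \ref{twoQlemma}.) It therefore remains to prove that $G$ hits $0$ somewhere in $(-T,-m)$ with high probability.

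Two ingredients go into this. First, the initial gap is tight: $\Qeq_{-T}\sim\pi$ by construction and $\Qeq_{T}\sim\pi$ by stationarity, so for every $\epsilon>0$ there is $K$ (independent of $T$) with $\PP(G_{-T}>K)\le\PP(\Qeq_{-T}>K/2)+\PP(\Qeq_T>K/2)<\epsilon$. Second, there is a coalescence mechanism with a uniform positive rate, located at the visits of $\Qeq$ to $0$. Set $\rho:=(1-\lambda)\mu(1-q)>0$ and let $E_i$ be the event $\{A_i=\infty,\ S_i=1,\ B_i=0\}$, so $\PP(E_i)=\rho$ and $E_i$ depends only on $R_i$. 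If $\Qeq_i=0$ and the processes have not yet coalesced, then necessarily $\tQ_i\ge 1$, and on $E_i$ the queue $\tQ$ records a departure while $\Qeq$ stays at $0$; hence $G_{i+1}=G_i-1$. So at every visit of $\Qeq$ to $0$ the gap strictly decreases with conditional probability at least $\rho$, independently of the past.

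I would then put these together by a counting/union bound. The stationary queue-length chain is ergodic with $\pi(0)>0$, so by the ergodic theorem the number $V$ of indices $i\in(-T,-m)$ with $\Qeq_i=0$ tends to $\infty$ almost surely as $T\to\infty$; in particular $\PP(V<v)\to0$ for each fixed $v$. Revealing the $R_i$ in order, each such $0$-visit contributes an event $E_i$ of probability $\rho$ independent of the past, so the number of these successes among the first $v$ visits stochastically dominates a $\Bin(v,\rho)$ variable. On the event $\{G_{-T}\le K\}$ it suffices to accumulate $K$ successes, since each reduces $G$ by one and $G$ never re-widens (shrinking the window by one step if one insists on coalescence strictly before $-m$). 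Hence
\[
\PP\big(\text{no coalescence in }(-T,-m)\big)\ \le\ \PP(G_{-T}>K)+\PP(V<v)+\PP\big(\Bin(v,\rho)<K\big),
\]
and choosing first $K$, then $v$, and finally letting $T\to\infty$ drives the right-hand side below any prescribed level. This gives coalescence in $(-T,-m)$ with probability tending to $1$, and (i)--(ii) follow as above.

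The main obstacle is precisely this last quantitative coalescence step: one must combine the tightness of the initial gap with the positive density of coalescence opportunities while ensuring, via the monotonicity of $f$, that the gap can never re-widen, so that a bounded number of successful reductions is enough. The monotonicity of the update map is the linchpin -- it is what makes the gap non-increasing and the coupling permanent -- and isolating a coalescence event of uniformly positive probability (here, an empty lower queue followed by an arrival-free accepted service) is the other key point; the remaining bookkeeping (independence of the $E_i$ given the past, and the ergodic count of $0$-visits) is routine.
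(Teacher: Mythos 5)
Your proposal is correct, and it reaches the conclusion by a genuinely different route from the paper. The paper treats the pair $(\Qeq,\tQ)$ abstractly as a Markov chain on $\ZZ_+^2$: a separate lemma (Lemma \ref{lem:pair-coupling}) shows that from any \emph{fixed} initial pair the two coordinates eventually meet (the pair chain visits $[0,k]\times[0,k]$ infinitely often by the ergodic theorem, and from any state there has probability at least $[(1-\lambda)\mu(1-q)]^k$ of reaching $(0,0)$ within $k$ steps), and the random initial condition $(\Qeq_{-T},\tQ_{-T})$ is then handled by tightness of $\pi$ together with a union bound over the $(k+1)^2$ states of $[0,k]^2$. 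You instead exploit the monotonicity and $1$-Lipschitz character of the update map $f(\cdot,R_i)$ to conclude that the gap $|\Qeq_i-\tQ_i|$ is non-increasing with unit decrements, so that coalescence requires only a bounded number ($K$, from tightness of the two $\pi$-distributed endpoints) of gap-reduction events; you then locate such events at the zero-visits of $\Qeq$, each succeeding with probability $\rho=(1-\lambda)\mu(1-q)$ independently of the past, and close with an ergodic count of zero-visits plus a binomial bound. Your monotonicity observation is essentially the mechanism behind Lemma \ref{twoQlemma} and the $Q_{\min}$ construction, so it is fully consistent with the paper's framework; what it buys you is a cleaner treatment of the awkward fact that $\tQ_{-T}=\Qeq_T$ depends on the entire driving sequence (your union bound only ever conditions on quantities measurable with respect to $\Qeq_{-T}$ and the $R_i$, the $\tQ$-dependent events entering only through pathwise containments), and it dispenses with the union bound over initial states. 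What the paper's more generic argument buys is robustness: the same proof is invoked verbatim for the multi-type tandem chain of Section \ref{sec:Z}, where the state is a vector and the order structure you rely on, while still present, would need additional care. The one step worth spelling out in your write-up is the stochastic domination by $\Bin(v,\rho)$: the zero-visit times are stopping times for the filtration generated by $\Qeq_{-T}$ and the $R_j$, and the events $E_{\tau_j}$ are revealed by fresh coordinates, so the standard sequential argument applies; as you say, this is routine, but it is the place where the dependence of $\tQ_{-T}$ on the future randomness could bite if the bound were organised differently.
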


If both $G_T$ and the event in (ii) occur, then 
the equilibrium departure process $\Deq$ and the 
cyclic departure process $\Dc$ agree on the window 
$[-m,m]$. Since $m$ is arbitrary, we obtain from Proposition 
\ref{prop:coupling} together with (\ref{GToccurs}):
\begin{corollary}\label{cor:cylinder-convergence}
For any cylinder event $C$, $\nu_{\lambda, \mu-\lambda}^{2T}(C)\to 
\nu^{(2)}_{\lambda, \mu-\lambda}(C)$ 
as $T\to\infty$. 
Hence by Proposition \ref{prop:cylinder},
$\nu$ is a stationary distribution for the system on $\ZZ$ as required. 
\end{corollary}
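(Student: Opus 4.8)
The plan is to use the monotonicity of the update map $f$ together with the positive recurrence of the queue, reducing the statement to showing that the gap between the two evolutions $\Qeq$ and $\tQ$ closes before time $-m$ with probability tending to $1$. Once that is shown, the remaining assertions are immediate: if $\Qeq_t=\tQ_t$ for some $t$, then since both evolutions are driven by the same randomness $(R_i)$ through the recursion (\ref{fdef}), we have $\Qeq_i=\tQ_i$ for all $i\ge t$; taking $t<-m$ gives (ii), while (i) follows because $\tQ_{-T}$ was defined to equal $\Qeq_T$, so that $\tQ_T=\Qeq_T=\tQ_{-T}$, i.e.\ $\tQ$ is cyclic.

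First I would record that for each fixed realisation of $R_i=(A_i,S_i,B_i)$ the map $q\mapsto f(q,R_i)$ is non-decreasing on $\ZZ_+$. This is checked directly from the dynamics of Section \ref{sec:basicqueue}: when $S_i=\infty$, or when $S_i=1$ and $A_i=1$, the update adds the same quantity to every state; when $S_i=1$ and $A_i=\infty$ one has $f(q,R_i)=q-\ind(B_i<q)$, and $f(q+1,R_i)-f(q,R_i)=1-\ind(B_i=q)\in\{0,1\}$. Hence the order between the two evolutions is preserved for all time, so throughout $[-T,T]$ we may write $U_i$ for the larger and $L_i$ for the smaller of $\Qeq_i,\tQ_i$; both $U$ and $L$ evolve by $f$ using the common randomness, and $\Delta_i:=U_i-L_i\ge0$. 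The same case analysis shows that the common arrival term cancels, so that $\Delta_{i+1}-\Delta_i=-\ind(L_i\le B_i<U_i)$ when $S_i=1,A_i=\infty$ and $\Delta_{i+1}=\Delta_i$ otherwise. In particular $\Delta$ is non-increasing, and it decreases (by exactly $1$) only at a step where $S_i=1$, $A_i=\infty$ and $B_i\in[L_i,U_i)$.

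The quantitative heart of the argument is that such a decrease is likely whenever the smaller queue is empty. If $L_i=0$ and $\Delta_i\ge1$, then conditionally on the past the probability of a decrease of $\Delta$ at step $i$ is $(1-\lambda)\mu\,\PP(B_i<\Delta_i)=(1-\lambda)\mu(1-q^{\Delta_i})\ge(1-\lambda)\mu(1-q)=:c>0$. To supply enough such opportunities I would use that $\Qeq$ is the stationary version of an irreducible positive-recurrent chain, hence ergodic with $\pi(0)>0$; the ergodic theorem then gives that the number of $i\in[-T,-m]$ with $\Qeq_i=0$ is asymptotic to $\pi(0)(T-m)$ and so tends to infinity almost surely, and since $L_i\le\Qeq_i$ each such $i$ also has $L_i=0$. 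On the other hand the initial gap satisfies $\Delta_{-T}\le\Qeq_{-T}+\Qeq_T$, a sum of two $\pi$-distributed variables, and is therefore tight in $T$. Revealing the $R_i$ in increasing order of $i$, the indicators of a decrease of $\Delta$ at the successive visits of $\Qeq$ to $0$ stochastically dominate independent Bernoulli$(c)$ trials; since only $\Delta_{-T}$ decreases are needed and the number of available trials diverges, with probability tending to $1$ the gap reaches $0$ before time $-m$. Combined with (\ref{GToccurs}) this gives the claimed convergence.

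The main obstacle is precisely this last coupling estimate, where a random (but tight) initial gap must be closed using a growing but state-dependent supply of opportunities. The care required is to phrase the success probabilities as conditional lower bounds with respect to the natural filtration, so that the empty-queue times of $\Qeq$ furnish genuine Bernoulli trials of uniformly positive success probability, and to keep the tightness of $\Delta_{-T}$ cleanly separated from the divergence of the number of such trials, so that finitely many successes suffice while infinitely many are available.
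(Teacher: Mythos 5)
Your reduction of the corollary to the merging of $\Qeq$ and $\tQ$ before time $-m$, and your derivation of properties (i) and (ii) from that merging, match the paper. Your mechanism for the merging itself is different: you exploit monotonicity of $q\mapsto f(q,R_i)$ to get a non-increasing gap $\Delta_i$ that drops by one exactly when $S_i=1$, $A_i=\infty$ and $B_i\in[L_i,U_i)$, and you harvest drop opportunities at the visits of $\Qeq$ to $0$; the paper instead runs a softer pair-chain argument (Lemma \ref{lem:pair-coupling}), showing the pair chain spends a positive fraction of time in $[0,k]^2$ and from there hits $(0,0)$ within $k$ steps with uniformly positive probability. Your version is more quantitative and would be a perfectly good substitute for that lemma.

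However, there is a genuine gap at exactly the step you flag as the crux. The initial state of the second evolution is $\tQ_{-T}=\Qeq_T$, which is a deterministic function of $\Qeq_{-T}$ and of the \emph{entire} driving sequence $R_{-T},\dots,R_{T-1}$. Consequently $\tQ_i$, and hence $U_i$, $L_i$ and $\Delta_i$, are \emph{not} measurable with respect to $\sigma(\Qeq_{-T};\,R_j,\ j<i)$, so the assertion that ``conditionally on the past'' a drop occurs at an empty-queue time with probability at least $c=(1-\lambda)\mu(1-q)$ is not licensed: the event $\{L_i\le B_i<U_i\}$ involves $U_i$, which already depends on $R_i,R_{i+1},\dots,R_{T-1}$ through $\Qeq_T$. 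The bias is real, not merely formal --- conditioning on a large initial gap (equivalently on $\Qeq_T$ being large) tilts the noise towards many arrivals and few services, i.e.\ against coupling --- so the claimed stochastic domination by i.i.d.\ Bernoulli$(c)$ trials does not follow from ``revealing the $R_i$ in increasing order''. The paper circumvents precisely this difficulty by proving the coupling estimate for each \emph{deterministic} initial pair $(x,x')\in[0,k]^2$, for which the pair chain is adapted to the natural filtration, and then union-bounding over the $(k+1)^2$ such pairs, using only the tightness of $\Qeq_{-T}$ and $\Qeq_T$ to place the random pair in $[0,k]^2$ with high probability. Your mechanism survives intact inside that framework --- or, more in the spirit of your own monotonicity observation, one can sandwich both $\Qeq$ and $\tQ$ between evolutions started from the deterministic states $0$ and $k$ and close the (adapted) gap of that pair instead --- but as written the key probabilistic estimate is not valid.
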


In the rest of this section we prove Proposition \ref{prop:coupling}. We want to consider a ``pair chain",
which consists of two copies of the chain evolving
with the same randomness.

\begin{lemma}\label{lem:pair-coupling}
Fix any $x_0$ and ${x_0}'$. Define
$Q_0=x_0$, $Q'_0=x'_0$, and 
\begin{equation}\label{pair-updates}
(Q_{t+1}, Q'_{t+1})=(f(Q_t, R_t), f(Q'_t, R_t))
\end{equation}
for $t\geq 0$.

If $\epsilon>0$, then there exists $t_0$ such that
\[
\PP(Q_t=Q'_t\text{ for all } t\geq t_0)>1-\epsilon.
\]
\end{lemma}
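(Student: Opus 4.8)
The plan is to combine a monotone coupling with the positive recurrence of the queue-length chain. First I would record the key structural fact about the update function $f$ at (\ref{fdef}): for each fixed value of the driving triple $r=(a,s,b)$, the map $k\mapsto f(k,r)$ is non-decreasing. This is a direct check against the four cases in Section \ref{sec:basicqueue}: when $s=\infty$ the length either stays put or increases by one irrespective of $k$; when $s=1,a=1$ it is unchanged; and when $s=1,a=\infty$ a departure occurs exactly when $b<k$, so a larger queue yields a successor that is again at least as large. Running $Q$ and $Q'$ with the common randomness $R_t$ therefore preserves the ordering. After possibly relabelling so that $x_0\ge x_0'$, we get $Q_t\ge Q'_t$ for all $t$, and I would set $\Delta_t=Q_t-Q'_t\ge0$. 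Inspecting the same cases shows $\Delta_t$ is non-increasing, decreasing by exactly one at step $t$ precisely on the event $\{A_t=\infty,\,S_t=1,\,Q'_t\le B_t<Q_t\}$ (both queues see a service with no arrival, and the mark falls strictly between the two lengths) and staying constant otherwise. In particular, once $\Delta_t=0$ the two chains are in the same state and, being driven by the same randomness, agree from then on; so the coupling event is exactly $\{\Delta_t=0\text{ for some }t\}$.

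It thus remains to prove $\Delta_t=0$ eventually, almost surely. The obstacle is that the per-step decrease probability $(1-\lambda)\mu\,\PP(Q'_t\le B_t<Q_t)=(1-\lambda)\mu\,q^{Q'_t}\bigl(1-q^{\Delta_t}\bigr)$ is not bounded away from $0$ uniformly in $t$, since it degrades when the lower queue $Q'_t$ is large. To get around this I would restrict attention to visits of $Q'$ to $0$. As $\lambda<\mu$, the chain $Q'$ is positive recurrent (Section \ref{sec:basicqueue}), so almost surely it visits $0$ infinitely often; let $V_1<V_2<\cdots$ be the successive hitting times of $0$, which are a.s.-finite stopping times. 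Because $R_{V_k}$ is independent of $\cF_{V_k}$ and $Q'_{V_k}=0$, the conditional decrease probability at such a time is
\[
\PP\bigl(A_{V_k}=\infty,\,S_{V_k}=1,\,B_{V_k}<\Delta_{V_k}\,\big|\,\cF_{V_k}\bigr)
=(1-\lambda)\mu\bigl(1-q^{\Delta_{V_k}}\bigr)\ge (1-\lambda)\mu(1-q)=:p_0>0
\]
whenever $\Delta_{V_k}\ge1$.

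The final step is a conditional Borel--Cantelli argument (Lévy's extension of the second Borel--Cantelli lemma), applied along the stopping times $V_k$. On the hypothetical event that $\Delta_t\ge1$ for all $t$, every $\Delta_{V_k}\ge1$, so the conditional decrease probabilities sum to at least $\sum_k p_0=\infty$; Lévy's lemma then forces infinitely many genuine decreases of $\Delta$ at the times $V_k$. But each such decrease lowers the non-negative integer $\Delta$ by one, which is impossible after more than $\Delta_0=|x_0-x_0'|$ decreases. This contradiction shows $\tau:=\inf\{t:\Delta_t=0\}<\infty$ almost surely. Since $\Delta$ stays at $0$ once it arrives there, $\{Q_t=Q'_t\text{ for all }t\ge t_0\}=\{\tau\le t_0\}$, and $\PP(\tau\le t_0)\uparrow\PP(\tau<\infty)=1$ as $t_0\to\infty$; choosing $t_0$ large gives the claimed bound $\PP(Q_t=Q'_t\text{ for all }t\ge t_0)>1-\epsilon$. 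The main work lies in isolating the visits to $0$ and the conditional Borel--Cantelli step; the monotonicity of $f$ and the non-increase of $\Delta_t$ are routine case checks.
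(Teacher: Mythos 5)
Your proof is correct, but it takes a genuinely different route from the paper's. The paper makes no use of monotonicity of the update map: it fixes $k$ with $\sum_{x=0}^{k}\pi(x)>3/4$, invokes the Markov chain ergodic theorem to show that the pair chain visits $\cH_k=[0,k]\times[0,k]$ infinitely often, and then notes that from any state of $\cH_k$ there is a uniformly positive probability (at least $[(1-\lambda)\mu(1-q)]^k$, corresponding to $k$ consecutive steps with no arrival and a service accepted by the first customer present) of reaching $(0,0)$ within $k$ steps; an infinitely-many-trials argument then gives coalescence almost surely, since $(0,0)$ is absorbing for the difference. You instead exploit the (correct, and easily checked) monotonicity of $k\mapsto f(k,r)$ to order the two copies under the common driving sequence, reduce to the non-increasing difference $\Delta_t=Q_t-Q'_t$, and force $\Delta$ to hit $0$ by a conditional Borel--Cantelli argument along the a.s.\ infinite sequence of visits of the lower chain to $0$ (guaranteed by positive recurrence, i.e.\ $\lambda<\mu$), where the one-step decrease probability is bounded below by $(1-\lambda)\mu(1-q)$. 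Your case analysis showing that $\Delta_t$ decreases by one exactly on $\{A_t=\infty,\,S_t=1,\,Q'_t\le B_t<Q_t\}$ and is otherwise constant is right, as is the use of the stopping-time property of the hitting times to get independence of $R_{V_k}$ from $\cF_{V_k}$. Both arguments rest on positive recurrence; yours gives a sharper picture (the chains may couple anywhere, not only at $(0,0)$, and at most $|x_0-x_0'|$ coupling events are needed), while the paper's is more generic and would survive if the update rule were not monotone.
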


\begin{proof}
The underlying Markov chain, with updates given by (\ref{fdef}), is irreducible, aperiodic, and positive recurrent, with a unique stationary distribution $\pi$.
Fix some $k$ such that $\sum_{x=0}^{k} \pi(x)>3/4$. 
Starting from any initial state, we can 
apply the Markov chain ergodic theorem to deduce
that with probability 1, the long-run proportion of 
time which the chain spends in $[0,k]$ is more than $3/4$. 

Now consider the pair chain with updates given by 
(\ref{pair-updates}). Starting from $(x_0, x'_0)$, with probability one, eventually each coordinate of the chain spends at least $75\%$ of its time in $[0,k]$. Hence the pair-chain spends at least half its time in the set $\cH_k:=[0,k]\times[0,k]$. In particular, it visits that set infinitely often. 

Consider the pair chain evolving from any state in $\cH_k$. With some uniformly positive probability, after $k$ more steps the chain is in the state $[0,0]$. For example, it suffices that at each of the next $k$ steps, there is no arrival, and a service which is accepted by the first customer in the queue (if any is present). This happens with probability at least $[(1-\lambda)\mu(1-q)]^k$. 

Since the chain visits $\cH_k$ infinitely often with probability 1, and from any state in $\cH_k$ there is uniformly positive probability to reach the state $(0,0)$ within $k$ steps, it follows that with probability 1 the chain 
will eventually visit state $(0,0)$. So certainly there is 
some time $t_0$ such that with probability $1-\epsilon$,
the chain visits $(0,0)$ before time $t_0$. But once
it visits $(0,0)$, the two coordinates of the chain 
stay the same for ever. 
\end{proof}

\begin{proof}[Proof of Proposition \ref{prop:coupling}]
In the setting of Proposition \ref{prop:coupling},
we start the pair-chain from a particular state 
$(\Qeq_{-T}, \tQ_{-T})$.
We wish to show that for fixed $m$, the probability 
that the two coordinates of the chain couple 
before time $-m$, i.e.\ within $T-m$ steps,
tends to $1$ as $T\to\infty$.

Note that Lemma \ref{lem:pair-coupling} tells us
that started from any fixed state, there is a $t_0$
sufficiently large that with probability at least 
$1-\epsilon$, the two coordinates of the chain couple
within $t_0$ steps. It remains to deal with the fact that
the initial condition $(\Qeq_{-T}, \tQ_{-T})$ is random rather
than fixed. 

Note that $(\Qeq_t, t\in\ZZ)$ is an equilibrium version of the chain. In particular, both $\Qeq_{-T}$ and $\tQ_{-T}=\Qeq_{T}$ have distribution $\pi$. For any given $\delta$, choose $k$ sufficiently large
that $\sum_{x=k+1}^\infty \pi_x\leq\delta/3$. 
Then 
\begin{align*}
\PP((\Qeq_{-T}, \tQ_{-T})\notin [0,k]\times[0,k])&
\leq \PP(\Qeq_{-T}\notin[0,k])+\PP(\Qeq_{T}\notin[0,k])\\
&=2\sum_{x=k+1}^\infty \pi_x
\\
&\leq 2\delta/3.
\end{align*}

Now using Lemma \ref{lem:pair-coupling}, there is some 
$t_0$ such that, started from any given state in $[0,k]\times[0,k]$, the probability of failing to couple within $t_0$ steps is 
at most $\delta/3(k+1)^2$. Then since there are $(k+1)^2$ possible initial states in $[0,k]\times[0,k]$,
the probability that there exists at least one initial state in $[0,k]\times[0,k]$ such that failure to couple occurs (using the updates
(\ref{pair-updates})) is at most $\delta/3$.

Then for all $T$ large enough that $T-m>t_0$, indeed
with probability at least $1-\delta$, the pair-chain
started from state $(\Qeq_{-T}, \tQ_{-T})$ at time $-T$, 
and updated using (\ref{pair-updates}), couples before
time $-m$. Since $\delta$ is arbitrary, this
gives Proposition \ref{prop:coupling} as required. 
\end{proof}

Hence we have Corollary \ref{cor:cylinder-convergence},
and via Proposition \ref{prop:cylinder}
we have justified the construction of the $2$-type stationary distribution on $\ZZ$ in Theorem \ref{thm:2statline}.

\subsection{$N>2$ (multi-type queue)}
Everything in the previous section generalises
straightforwardly to the case $N>2$,
to prove Theorem \ref{thm:Nstatline} constructing
the stationary measure of the $N$-type system on $\ZZ$.
Let us outline the changes. 

The queueing Markov chain is now a system of queues in 
tandem. There are $N-1$ queues, and the departure process
of queue $r$ is the arrival process for queue $r+1$,
for $1\leq r\leq N-2$. (Note that a customer leaving 
queue $r$ at time $i$ arrives at queue $r+1$ in the 
same time-slot $i$, so may pass through queue $r+1$,
and further queues, in the same time-slot.)

The state of the system at time-step $i$ may be 
written as 
\[
\bQ_i=\left(Q_i^{(n,r)}, 1\leq n\leq r\leq N-1\right),
\]
where $Q_i^{(n,r)}$ is the number of customers
of type $n$ in queue $r$ at the beginning of time-step $i$.
The evolution of the queues can again be controlled by a vector of random variables corresponding to each time-step. 
Now we would have 
$R_i=\left(A_i, (S^{(r)}_i, 1\leq r\leq N-1), 
(B^{(r)}_i, 1\leq r\leq N-1)\right)$, where again all entries
are independent, with 
\begin{gather*}
\PP(A_i=1)=1-\PP(A_i=\infty)=\lambda_1,\\
\PP\left(S_i^{(r)}=1\right)
=1-\PP\left(S_i^{(r)}=\infty\right)=\mu_r:=
\sum_{n=1}^{r+1} \lambda_n,\\
B_i^{(r)} \text{ geometric with parameter } q.
\end{gather*}
The $(S_i^{(r)})$ provide the service process of queue $r$,
and the $(B_i^{(r)})$ govern the random choices of acceptance and rejection of service at queue $r$. The dynamics
of the queues are then just as we developed in  
Section \ref{sec:queueing}. 
Just as at (\ref{fdef}), we can 
construct the evolution of the Markov chain in the form 
\[
\bQ_{i+1}=f\left(\bQ_i, R_i\right)
\]
for some appropriate function $f$, 
and this again allows us to couple versions
of the system starting from different initial conditions. 

The equivalent of the event $G_T$ defined at 
(\ref{GToccurs}) is now that, on the time interval
$[-T, T-1]$,
queue $1$ has more services than arrivals than services,
and queue $r+1$ has more services than queue $r$ does
for each $r=1,2,\dots, N-2$. Again the probability of this
event tends to $1$ as $T\to\infty$. 

The Markov chain is now multi-dimensional, and its
stationary distribution is no longer easy to obtain. 
However, it is still positive recurrent
(as can be seen by considering the marginal evolution
of each queue individually; for any given $r$, the process of
the total number of customers in the $r$th queue,
namely $\sum_{n=1}^r Q_i^{(n,r)}, i\in\ZZ$, 
behaves like a single-type queue with 
arrivals at rate $\sum_{n=1}^r \lambda_n$
and potential services at rate $\sum_{n=1}^{r+1} \lambda_n$).

Then exactly the same ideas for coupling the cyclic evolution and the equilibrium evolution apply, as illustrated in Figure \ref{fig:coupling}.

For later reference, we summarise the key points 
in the following result:
\begin{proposition}
\label{prop:linesummary}
Fix $\lambda_1, \dots, \lambda_N\in(0,1)$ 
with $\lambda_1+\dots+\lambda_N<1$. 

Let $M_n\sim\Bin(2T, \lambda_n)$ independently for $n=1,2,\dots,N$.
Let $G_T$ be the event $M_1<M_2<\dots<M_N$. 
Then $\PP(G_T)\to1$ as $T\to\infty$. 

Define $K_1=M_1$ and $K_n=M_n-M_{n-1}$ for $n=2,\dots,N$.

Let $\nu^{2T}_{\lambda_1,\dots,\lambda_N}$
be the distribution of a configuration drawn from the stationary
distribution of the ASEP on $[-T,T-1]$ with $K_n$ particles of 
type $n$ for $n=1,\dots,N$, conditional on $G_T$. 

Then $\nu^{2T}_{\lambda_1,\dots,\lambda_N}$
is a mixture of stationary distributions for the $N$-type ASEP
on $[-T,T-1]$ and so is itself stationary. 

As $T\to\infty$,
$\nu^{2T}_{\lambda_1,\dots,\lambda_N}(C)\to\nu^{(N)}_{\lambda_1,\dots,\lambda_N}(C)$
for all cylinder events $C$,
where $\nu^{(N)}_{\lambda_1,\dots,\lambda_N}$
is the distribution defined in Theorem \ref{thm:Nstatline}. 
\end{proposition}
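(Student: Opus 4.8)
The plan is to follow the blueprint of the single-queue argument of Section \ref{subsec:coupling}, adapting each step to the tandem of $N-1$ queues, and to treat the three assertions of the proposition in turn. For $\PP(G_T)\to1$ I would invoke the law of large numbers: each $M_n/2T$ converges in probability to its limiting density, these limiting densities are strictly increasing in $n$ (each successive line of the diagram has one more positive-density type, since every $\lambda_n>0$), and so the ordering $M_1<M_2<\dots<M_N$ fails with probability at most a sum of $N-1$ binomial large-deviation terms, each tending to $0$.

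For the stationarity of $\nu^{2T}_{\lambda_1,\dots,\lambda_N}$ I would use the explicit representation $\bQ_{i+1}=f(\bQ_i,R_i)$ together with Theorem \ref{thm:Nstatring}. On $G_T$ the construction in the proof of Lemma \ref{Qminlemma}, applied to the embedded one-type queues, produces a cyclic evolution $\Qc$ on $[-T,T-1]$; by the multi-type analogue of Lemma \ref{twoQlemma} any two such cyclic evolutions differ by a constant vector and hence yield the same cyclic departure configuration $\Dc$. Conditioning further on the count vector $(K_1,\dots,K_N)$, Theorem \ref{thm:Nstatring} identifies the law of $\Dc$ as the stationary distribution of the $N$-type ASEP on the ring $\ZZ_{2T}$ with those counts. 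Thus $\nu^{2T}_{\lambda_1,\dots,\lambda_N}$, being the law of $\Dc$ conditioned on $G_T$, is a mixture over the random counts of ring-stationary distributions; since the dynamics preserve the counts and each component is invariant, the mixture is invariant, which is the asserted stationarity.

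For the cylinder convergence I would run the coupling of Proposition \ref{prop:coupling}. Start an equilibrium evolution $\Qeq$ from the stationary law $\pi$ of the multi-dimensional chain and a second evolution $\tQ$ from $\tQ_{-T}=\Qeq_T$, both driven by the common randomness $(R_i)$; the chain is positive recurrent because, for each $r$, the total occupancy of queue $r$ evolves as a single-type queue whose arrival rate $\sum_{n\le r}\lambda_n$ is strictly below its service rate $\sum_{n\le r+1}\lambda_n$. I would then prove the multi-type analogue of Lemma \ref{lem:pair-coupling}: the pair chain returns to a compact set infinitely often, and from any pair-state there one can, with uniformly positive probability, empty both copies within a bounded number of steps. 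Because $f$ is deterministic given the randomness, once the two copies agree they agree forever; controlling the random initial pair-state $(\Qeq_{-T},\tQ_{-T})$ through the tails of $\pi$ as in Proposition \ref{prop:coupling} then shows the coupling occurs before time $-m$ with probability tending to $1$. On that event $\tQ$ is cyclic and agrees with $\Qeq$ on $[-m,m]$, so $\Dc$ and $\Deq$ agree there; since the restriction of $\Deq$ to $[-m,m]$ has law given by $\nu^{(N)}_{\lambda_1,\dots,\lambda_N}$ and $m$ is arbitrary, cylinder convergence follows, and with Proposition \ref{prop:cylinder} this also yields Theorem \ref{thm:Nstatline}.

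The main obstacle is the pair-chain coupling in the multi-dimensional tandem setting. Establishing positive recurrence of the joint chain is routine via the marginal totals, but the uniform drive to the coupled state is more delicate than in a single queue: a favorable run of time-slots (no external arrival, every available service accepted by the highest-priority customer present) first drains queue $1$, and only once queue $1$ is empty does it stop feeding queue $2$, which then empties, and so on down the tandem. I would need to verify that the number of steps required to reach the all-empty configuration is bounded uniformly over the chosen compact set of pair-states, so that the probability of this favorable run is bounded below uniformly; this sequential-draining estimate is the one genuinely new ingredient beyond the $N=2$ case.
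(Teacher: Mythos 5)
Your proposal follows essentially the same route as the paper: the paper itself treats Proposition \ref{prop:linesummary} as the routine generalisation of the $N=2$ argument of Section \ref{subsec:coupling} (cyclic evolution via Lemma \ref{Qminlemma} and the analogue of Lemma \ref{twoQlemma}, identification of the cyclic departure law via Theorem \ref{thm:Nstatring}, positive recurrence via the marginal totals of each queue, and the equilibrium-versus-cyclic pair coupling of Proposition \ref{prop:coupling}), and you have reproduced exactly that blueprint, including the correct reading of $G_T$ as the ordering of the occupancy counts of successive lines, whose densities $\lambda_1+\dots+\lambda_n$ are strictly increasing.

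One concrete caveat on the single step you rightly flag as the new ingredient. The favorable run you describe --- no external arrival, every available service accepted by the highest-priority customer present --- does \emph{not} deterministically empty the tandem in a bounded number of steps. The reason is that an empty queue with an available service still emits an \emph{unused service}, which enters the next queue as a customer of the corresponding lower-priority type; so queue $1$ being empty does not stop it feeding queue $2$, and queue $2$ can even grow on a step where queue $1$ has a service but queue $2$ does not. What stops queue $r+1$ receiving arrivals is queue $r$ having \emph{no available service}, not queue $r$ being empty. The fix is to drain in phases: in phase $r$, demand no external arrival, no available service at queues $1,\dots,r-1$, and an available service accepted at first offer (mark $B^{(j)}_i=0$) at each of queues $r,\dots,N-1$. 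Then queue $r$ decreases by one per step while the totals of queues $r+1,\dots,N-1$ are conserved (each receives one arrival and, having both an arrival and a service, must emit one departure), and queues $1,\dots,r-1$ stay empty. After $N-1$ phases of at most $k$ steps each, both copies of the pair chain are at the all-zero state, with probability bounded below uniformly over the compact set since the event is specified purely in terms of the driving randomness $R_i$. With this adjustment your argument goes through and matches the paper's intended proof.
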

From this cylinder convergence, using Proposition \ref{prop:cylinder},
we get the result of Theorem \ref{thm:Nstatline} as required.

\section{Clustering}
\label{sec:convoys}
In this section we prove Theorem \ref{thm:Wlimit}.
Recall that
$Y^{(L)}$ denotes a configuration drawn from the
stationary distribution of an ASEP with $L$ types on the ring
with $L$ sites 
$-\lfloor\frac L2\rfloor, \dots,$ $-1,$ $0,$ $1,$ $\dots, \lceil\frac L2\rceil-1$, and as at (\ref{WLdef}), we put
\begin{equation*}
W^{(L)}=
\left(
\dots,0,0,0,\tfrac1L Y^{(L)}_{-\lfloor\frac L2\rfloor},\dots, \tfrac1L Y^{(L)}_{-1},
\tfrac1L Y^{(L)}_{0}, \tfrac1L Y^{(L)}_{1}, \dots, \tfrac1L Y^{(L)}_{\lceil\frac L2\rceil-1},0,0,0,
\dots\right).
\end{equation*}
We want to show that $W^{(L)}$ converges in distribution as 
$L\to\infty$. 
Some of the arguments we need involving convergence
of stationary distributions on the ring to those on the line
have already been developed in the previous section during the proof
of Theorems \ref{thm:2statline} and \ref{thm:Nstatline}. 
We begin with a variation of Proposition \ref{prop:linesummary}.

\begin{proposition}
\label{prop:locallimit}
Fix $\lambda_1, \dots, \lambda_N\in(0,1)$ 
with $\lambda_1+\dots+\lambda_N<1$. 

Suppose the sequences $k_1^{(L)}, \dots, k_N^{(L)}$ 
of positive integers satisfy $k_n^{(L)}/L \to \lambda_n$
as $L\to\infty$, for $n=1,\dots, N$.

Let $\nu^{L}_{k_1^{(L)},\dots, k_N^{(L)}}$ be the 
stationary distribution of the ASEP on a ring of $L$
sites, labelled $-\lfloor\frac L2\rfloor, \dots,$ $-1,$ $0,$ $1,$ $\dots, \lceil\frac L2\rceil-1$ in a cyclic way, 
with $k_n^{(L)}$ particles of type $n$ for $n=1,\dots,N$. 

Let $\nu_{\lambda_1,\dots, \lambda_N}$
be the ergodic translation-invariant stationary 
distribution of the $N$-type ASEP on $\ZZ$ with density 
$\lambda_n$ of particles of type $n$ for $n=1,\dots,N$,
given by Theorem \ref{thm:Nstatline}.

Then $\nu^{L}_{k_1^{(L)},\dots, k_N^{(L)}}(C)
\to\nu_{\lambda_1,\dots, \lambda_N}(C)$ as $L\to\infty$ 
for all cylinder events $C$.
\end{proposition}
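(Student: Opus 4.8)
The plan is to read Proposition \ref{prop:locallimit} off the coupling already built in Section \ref{subsec:coupling} (and its $N>2$ analogue), the only new ingredient being that we condition on the \emph{exact} particle counts instead of averaging over their binomial fluctuations as in Proposition \ref{prop:linesummary}. Fix a cylinder event $C$ depending on a window $W=\{-m,\dots,m\}$, and run the driving randomness $(R_i)$ of Section \ref{subsec:coupling} on the ring of size $L$, with Bernoulli arrival/service rates and geometric marks. Let $\Gamma_L$ be the event that the induced counts are exactly $k_1^{(L)},\dots,k_N^{(L)}$. Conditioned on $\Gamma_L$, the arrival and service sets become uniform of the prescribed sizes, so by Theorem \ref{thm:Nstatring} the cyclic output satisfies $\Dc\mid\Gamma_L\sim\nu^{L}_{k_1^{(L)},\dots,k_N^{(L)}}$. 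It therefore suffices to prove $\PP(\Dc\in C\mid\Gamma_L)\to\nu_{\lambda_1,\dots,\lambda_N}(C)$.

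First I would push the coupling of Proposition \ref{prop:coupling} through this conditioning. That proposition shows that the equilibrium and cyclic evolutions agree on $W$ off an event of probability $\epsilon_L$; the quantitative point I need (a mild strengthening) is that $\epsilon_L$ is exponentially small in $L$, since the pair chain of Lemma \ref{lem:pair-coupling} returns to a fixed bounded set a linear number of times and reaches the coupled state $(0,0)$ with uniformly positive chance at each return, giving an exponential hitting-time tail. By the local central limit theorem $\PP(\Gamma_L)$ is only polynomially small in $L$, so $\PP(\Dc|_W\neq\Deq|_W\mid\Gamma_L)\le \epsilon_L/\PP(\Gamma_L)\to 0$. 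Hence $\PP(\Dc\in C\mid\Gamma_L)=\PP(\Deq\in C\mid\Gamma_L)+o(1)$, and the problem collapses to the equivalence-of-ensembles statement $\PP(\Deq\in C\mid\Gamma_L)\to\nu_{\lambda_1,\dots,\lambda_N}(C)$.

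For that step I would argue that $\Deq|_W$ is an essentially local functional. It is determined by the queue-lengths entering the left edge of $W$ together with $(R_i)_{i\in W}$; and by the exponential forgetting of the queueing chain --- which is exactly the content of the pair-coupling in Lemma \ref{lem:pair-coupling} --- those incoming queue-lengths depend, up to an error $\delta$, only on the randomness in a window of bounded size $O_\delta(1)$ to the left of $W$. Thus for each fixed $\delta$, $\Deq|_W$ agrees up to probability $\delta$ with a functional of $(R_i)$ over a bounded set of coordinates. A bounded set of i.i.d.\ coordinates is asymptotically independent of the global count event $\Gamma_L$: their conditional law converges to a product of single-site conditional laws whose parameters converge to $\lambda_1,\dots,\lambda_N$ because $k_n^{(L)}/L\to\lambda_n$ (the geometric marks are untouched by the conditioning). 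Letting $L\to\infty$ and then $\delta\to 0$ yields $\PP(\Deq\in C\mid\Gamma_L)\to\nu_{\lambda_1,\dots,\lambda_N}(C)$, settling the $N=2$ case; the tandem structure of Section \ref{sec:queueing} carries every step over to general $N$, now conditioning simultaneously on the counts at each of the $N-1$ levels.

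The hard part is the equivalence-of-ensembles step: showing that fixing the exact global particle counts does not perturb the local cylinder marginals in the limit. What makes it go through are the two quantitative facts isolated above --- the exponentially small coupling-failure probability beats the merely polynomially small probability of hitting the prescribed counts, so the coupling survives the conditioning, and the exponential forgetting of the queue reduces $\Deq|_W$ to a genuinely local functional, to which the standard asymptotic independence of finitely many i.i.d.\ coordinates from a conditioned global sum applies.
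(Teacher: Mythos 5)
Your route is genuinely different from the paper's, and as written it has gaps at each of its three quantitative hinges. First, the claim that $\PP(\Gamma_L)$ is ``only polynomially small'' by the local central limit theorem is false under the stated hypothesis: $k_n^{(L)}/L\to\lambda_n$ allows $|k_n^{(L)}-\lambda_n L|$ to be of order $o(L)$ but much larger than $\sqrt{L}$, in which case $\PP\bigl(\Bin(L,\lambda_n)=k_n^{(L)}\bigr)$ is stretched-exponentially small. All that is guaranteed is $\PP(\Gamma_L)=e^{-o(L)}$; that still loses to a genuinely exponential coupling bound, but the step must be restated and the local CLT dropped. Second, the ``mild strengthening'' to an exponentially small coupling-failure probability is a real piece of work that the paper nowhere provides: Lemma \ref{lem:pair-coupling} is purely qualitative, and to get $e^{-cL}$ you need exponential tails for the return times of the pair chain to a bounded box (a Foster--Lyapunov/drift argument) \emph{and} control of the initial state, which is drawn from $\pi$ and can be large; this is plausible for this geometrically ergodic birth--death-type chain but is not a one-line remark. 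Third, and most seriously, the equivalence-of-ensembles step is circular as sketched. The statement that $\Deq|_W$ agrees ``up to probability $\delta$'' with a functional of a window of size $O_\delta(1)$ is an \emph{unconditional} bound; transferring it through the conditioning on $\Gamma_L$ costs a factor $1/\PP(\Gamma_L)=e^{o(L)}$, which annihilates any fixed $\delta$. To survive the conditioning the localization window must grow like $\epsilon L$, at which point ``a bounded set of i.i.d.\ coordinates is asymptotically independent of the global count'' no longer applies and must be replaced by a quantitative hypergeometric-versus-binomial total-variation estimate for samples of macroscopic size, together with a tightness statement for the conditional law of the queue length. None of this is addressed.

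For contrast, the paper's proof avoids conditioning altogether. It sandwiches the fixed-count measure $\nu^{L}_{k_1^{(L)},\dots,k_N^{(L)}}$, with probability tending to one under a monotone coupling (obtained by realising the relevant measures as projections of a single $2N$-type measure) and the law of large numbers for the binomial counts, between the binomial-mixture measures with first density perturbed to $\lambda_1\mp\epsilon$; each of those converges on cylinder events by Proposition \ref{prop:linesummary}, the limits differ by $O(\epsilon)$ on any fixed window, and one lets $\epsilon\to0$. This needs no local CLT, no quantitative coupling rate, and no equivalence of ensembles. If you want to pursue your conditioning approach, the three points above are exactly what you would have to supply; as the argument stands, it does not yet constitute a proof.
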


\begin{proof}
We indicate the differences between this
result and Proposition \ref{prop:linesummary} above.

A first and rather trivial difference is that we no 
longer assume $L$ is even. In the previous section we took
$L=2T$ throughout, but this was purely for notational 
convenience and makes no difference to the method of proof. 

The more substantial difference is that now we take 
the number of particles of each type to be deterministic,
rather than given in terms of binomial random variables as
in Proposition \ref{prop:linesummary}. 

We may couple configurations distributed according to $\nu_{\lambda_1-\epsilon,\lambda_2, \dots, \lambda_N}$
and $\nu_{\lambda_1,\lambda_2, \dots, \lambda_N}$ so that on any
given set of $m$ sites, they disagree with probability at most 
$N m \epsilon$. (This can be done for example by constructing
both measures as different projections of the $2N$-type measure
with densities $\lambda_1-\epsilon, \epsilon, \lambda_2-\epsilon, \epsilon, \dots, \lambda_N-\epsilon, \epsilon$.)
The same holds for the measures 
$\nu_{\lambda_1,\lambda_2, \dots, \lambda_N}$
and $\nu_{\lambda_1+\epsilon,\lambda_2, \dots, \lambda_N}$.
	
Via similar couplings, and using the law of large numbers
for binomial random variables as sums of independent Bernoulli 
random variables, we can couple configurations
$\eta^L_-\sim \nu^L_{\lambda_1-\epsilon,\lambda_2, \dots, \lambda_N}$,
$\eta^L \sim \nu^L_{\lambda_1,\lambda_2, \dots, \lambda_N}$, and
$\eta^L_+\sim \nu^L_{\lambda_1+\epsilon,\lambda_2, \dots, \lambda_N}$
so that with probability tending to $1$ as $L\to\infty$,
$\eta^L_+\leq \eta^L \leq \eta^L_-$ componentwise.

Combining these two facts with the convergence 
(in terms of probabilities of cylinder events)
of 
$\nu^L_{\lambda_1-\epsilon,\lambda_2, \dots, \lambda_N}$
and 
$\nu^L_{k_1, k_2, \dots, k_N}$
to 
$\nu_{\lambda_1-\epsilon,\lambda_2, \dots, \lambda_N}$
and 
$\nu_{\lambda_1+\epsilon,\lambda_2, \dots, \lambda_N}$
respectively, as given by Proposition \ref{prop:linesummary},
and taking $\epsilon$ to $0$,
we then also get the convergence of the probability of any 
cylinder event $C$ under 
$\nu^L_{k_1, k_2, \dots, k_N}$
to that under $\nu_{\lambda_1,\lambda_2, \dots, \lambda_N}$
as $L\to\infty$, as required for Proposition \ref{prop:locallimit}.
\end{proof}

Now consider the probabilities of cylinder events written in terms
of $W^{(L)}$. In particular, appropriate projections
of $W^{(L)}$ give us systems with a finite number of types,
to which we can apply Proposition \ref{prop:locallimit}.

\begin{proposition}
Let $i\in\ZZ$ and let $x,y\in(0,1)$. As $L\to\infty$,
\begin{equation}\label{projection2}
\PP\left(W^{(L)}_i\leq x, W^{(L)}_{i+1}\leq y\right)
\to
\begin{cases}
\nu_{x,y-x}\left(\eta_0=1, \eta_1\in\{1,2\}\right)&\text{ if }x<y
\\
\nu_{y,x-y}\left(\eta_0\in\{1,2\}, \eta_1=1\right)&\text{ if }y<x
\end{cases},
\end{equation}
where $\nu_{\lambda_1, \lambda_2}$ is the 
ergodic translation-invariant stationary distribution 
(of a configuration $\eta\in\{1,2,\infty\}^\ZZ$)
for the $2$-type ASEP on $\ZZ$ 
with densities $\lambda_1$ and $\lambda_2$
of type-$1$ and type-$2$ particles respectively,
given by Theorem 
\ref{thm:2statline}.

Similarly, for any $x_0, x_1, \dots, x_{r-1}\in(0,1)$,
the probability of any event of the form
\begin{equation}\label{projectionN}
\{W^{(L)}_i\leq x_1, W^{(L)}_{i+1}\leq x_2,\dots, 
W^{(L)}_{i+r-1}\leq x_{r}\}
\end{equation}
converges as $L\to\infty$ to a limit that does not depend on
$i$ (which may be written in terms of a suitable 
$r$-type stationary distribution for the ASEP on $\ZZ$).
\end{proposition}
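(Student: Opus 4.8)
The plan is to realise each event concerning $W^{(L)}$ as a cylinder event for a multi-type ASEP with a \emph{bounded} number of types, and then to invoke Proposition \ref{prop:locallimit}. The tool is a coarsening (or band) projection of the $L$-type process, generalising the one-type projection recalled in Section \ref{sec:intromain}. Given cut-points $0\leq c_1<c_2<\dots<c_s\leq L$, map each original type $t$ to class $j$ if $c_{j-1}<t\leq c_j$ (with $c_0=0$), and to a hole if $t>c_s$. Because the ASEP rates depend only on the relative order of the types at neighbouring sites, a swap at $(i,i+1)$ alters the image configuration precisely when the two sites lie in different bands, and in that case the image swaps at the same rate $1$ or $q$; hence the image evolves as an $s$-type ASEP. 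Applying this to the $L$-type stationary distribution (with one particle of each type) and using irreducibility of the image chain, the image is exactly the $s$-type ring stationary distribution with particle counts $k_j^{(L)}=c_j-c_{j-1}$.

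First I would handle the two-point event with $x<y$. Take $s=2$ with cut-points $c_1=\lfloor xL\rfloor$, $c_2=\lfloor yL\rfloor$, and write $\eta$ for the resulting $2$-type configuration. Since $Y^{(L)}_i$ is a positive integer, $\{W^{(L)}_i\leq x\}=\{Y^{(L)}_i\leq\lfloor xL\rfloor\}=\{\eta_i=1\}$ and likewise $\{W^{(L)}_{i+1}\leq y\}=\{\eta_{i+1}\in\{1,2\}\}$; thus the event is the cylinder event $\{\eta_i=1,\eta_{i+1}\in\{1,2\}\}$. The counts satisfy $k_1^{(L)}/L\to x$ and $k_2^{(L)}/L\to y-x$, both strictly positive, so Proposition \ref{prop:locallimit} (with $N=2$) yields convergence to $\nu_{x,y-x}(\eta_0=1,\eta_1\in\{1,2\})$. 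For $y<x$ I would instead take $c_1=\lfloor yL\rfloor$, $c_2=\lfloor xL\rfloor$; then $\{W^{(L)}_i\leq x\}=\{\eta_i\in\{1,2\}\}$ and $\{W^{(L)}_{i+1}\leq y\}=\{\eta_{i+1}=1\}$, giving densities $(y,x-y)$ and the second line of (\ref{projection2}).

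For the general event in (\ref{projectionN}) I would let $0<z_1<\dots<z_s<1$ be the distinct values among $x_1,\dots,x_r$ and use the cut-points $c_j=\lfloor z_jL\rfloor$. Each band then has count $\lfloor z_jL\rfloor-\lfloor z_{j-1}L\rfloor$, whose normalised limit $z_j-z_{j-1}$ is strictly positive, so Proposition \ref{prop:locallimit} applies with the fixed number of types $N=s\leq r$. Each constraint $W^{(L)}_{i+\ell}\leq x_{\ell+1}$ becomes $\{\eta_{i+\ell}\in\{1,\dots,j(\ell)\}\}$, where $j(\ell)$ is the band index with $z_{j(\ell)}=x_{\ell+1}$; the conjunction over $\ell=0,\dots,r-1$ is a cylinder event on the $r$ sites $i,\dots,i+r-1$. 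Its probability thus converges to the corresponding probability under $\nu_{z_1,z_2-z_1,\dots,z_s-z_{s-1}}$ on $\ZZ$, and since that limiting measure is translation-invariant (Theorem \ref{thm:Nstatline}), the limit does not depend on $i$.

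The only point that really needs checking is the coarsening projection property, and this is immediate from the order-dependence of the rates, exactly as for the single-band projection recalled in the introduction; the manipulation of floors and the verification that each band has strictly positive asymptotic density (ensured by passing to \emph{distinct} thresholds $z_j$) are routine. I therefore do not anticipate a substantial obstacle: the content is entirely in reducing these events to cylinder events for a fixed number of types, so that the already-established local-limit result Proposition \ref{prop:locallimit} can be applied.
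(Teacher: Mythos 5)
Your proposal is correct and follows essentially the same route as the paper: both project the $L$-type configuration through thresholds at $\lfloor u_j L\rfloor$ onto a finite-type ring ASEP, rewrite the event as a cylinder event for that projection, and invoke Proposition \ref{prop:locallimit} together with translation invariance of the limiting measure. The only differences are cosmetic — you spell out the lumping property of the dynamics and the treatment of repeated thresholds, which the paper leaves implicit.
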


\begin{proof}
Suppose $0=u_0<u_1<\dots<u_{N}<1$, 
and a function $h:(0,1)\to\{1,2,\dots, N,\infty\}$
is defined by 
\begin{equation}\label{fprojection}
h(x)=
\begin{cases}
n &\text{ if } u_{n-1}<x\leq u_n, \text{ for } n=1,2,\dots, N;\\
\infty &\text{ if } x>u_N.
\end{cases}
\end{equation}
Then $\left(h(W^{(L)}_i),  
-\lfloor\frac L2\rfloor \leq i \leq \lceil\frac L2\rceil-1\right)$
has the stationary distribution of the $N$-type ASEP 
on the ring of size $L$, with $k_n$ particles of type $n$ 
for $n=1,\dots, N$, where $k_n=\lfloor u_n L \rfloor - \lfloor u_{n-1} L \rfloor$.

Putting $N=2$ and $u_1=\min\{x,y\}$, $u_2=\max\{x,y\}$,
we have $h(W_i^{(L)})=1$ iff $W_i^{(L)}\leq u_1$,
and $h(W_i^{(L)})\in\{1,2\}$ iff $W_i^{(L)}\leq u_2$. 
Then (\ref{projection2}) follows directly from 
Proposition \ref{prop:locallimit} with $\lambda_1=u_1$,
$\lambda_2=u_2-u_1$. 

More generally for (\ref{projectionN}), 
let $(u_1, \dots, u_N)$ be the increasing reordering of 
$(x_1, \dots, x_N)$. Then we can apply Proposition 
\ref{prop:locallimit} with $\lambda_n=u_n-u_{n-1}$, $n=1,\dots, N$,
to obtain the limit in (\ref{projectionN}) 
in terms of the $N$-type stationary distribution 
$\nu_{\lambda_1, \dots, \lambda_N}$.
Since that distribution is translation invariant on $\ZZ$,
the limit is the same for all $i$. 
\end{proof}

Convergence of all events of the form of (\ref{projectionN}) 
is enough to show that the sequence $W^{(L)}$ has a single
distributional limit point, and this gives the distribution of 
$W$ required for part (a) of Theorem \ref{thm:Wlimit}.
Since $W^{(L)}_i$ has uniform distribution on 
$\{1/L, 2/L, \dots, 1\}$, we have that $W_i\sim\text{Uniform}[0,1]$.
In fact, with little more work we could obtain
that the distribution of $W$ is the unique 
ergodic translation-invariant distribution for
the ASEP on $\ZZ$ whose marginals have Uniform$[0,1]$ distribution
(but we won't need to use this fact directly). 

For parts (b) and (c), we will analyse 
the distribution of $W$ via its projections onto 
$2$-type systems as in (\ref{projection2}),
using the queueing construction of the stationary
distribution $\nu_{\lambda_1, \lambda_2}$ of the $2$-type ASEP on $\Z$. 
Let us recall that construction.
We have an stationary queueing system in discrete time.
At each time $i\in\ZZ$, we have an arrival 
with probability $\lambda_1$, and independently 
a potential service with probability $\mu:=\lambda_1+\lambda_2$. 
When a potential service occurs, if an arrival has occurred
at the same time-step, then a departure occurs with probability 1;
if no arrival occurred, then a departure occurs with probability
$1-q^{k}$ where $k$ is the number of customers present in the queue,
and otherwise (with probability $q^k$) an unused service occurs. 
Then set $\eta_i=1$ if a departure occurs at $i$, $\eta_i=2$ 
if an unused service occurs at $i$, and $\eta_i=\infty$ if 
no service was available at $i$. At a given time $i$,
the marginal probability that $\eta_i=1$ is $\lambda$,
that $\eta_i=2$ is $\mu-\lambda$, and that $\eta=\infty$ is $1-\mu$.

First, for part (c), we use arguments
similar to those used for related calculations 
concerning the TASEP speed process in the $q=0$
case in \cite{AAV}, although the analysis is
more complicated now that $q>0$, 
since unused services can occur even when the queue is not empty. 

For convenience we write $\nu_{\lambda,\mu-\lambda}(a,b)$
for the probability that $\eta_0=a, \eta_1=b$ under 
$\nu_{\lambda, \mu-\lambda}$, where $a,b\in\{1,2,\infty\}$.

Recall that we write $f$ for the density of $(W_0,W_1)$ 
on $\{(x,y)\in[0,1]^2:x\ne y\}$, and $f^*$ for the density
along the diagonal $x=y$. 

\begin{lemma}$ $
\label{lemma:projections}
\begin{itemize}
\item[(i)]
$\nu_{\lambda,\mu-\lambda}(2,\infty)=(\mu-\lambda)(1-\mu)$;
\item[(ii)]
$\nu_{\lambda,\mu-\lambda}(2,2)=(\mu-\lambda)\big[(1-\lambda)\mu-q\lambda(1-\mu)\big]$;
\item[(iii)]
$\nu_{\lambda,\mu-\lambda}(2,1)=(\mu-\lambda)\big[\lambda\mu+q\lambda(1-\mu)\big]$;
\item[(iv)]
\[
f(x,y)=
\begin{cases}
\frac{d^2}{dxdy}\nu_{x,y-x}(2,\infty)& \text{ for }x<y,\\
\frac{d^2}{dxdy}\nu_{y,x-y}(2,1)& \text{ for }x>y,
\end{cases}
\,\,\,
f^*(x)=\lim_{\epsilon\downarrow0}
\frac{1}{\epsilon}\nu_{x,x+\epsilon}.
\]
\end{itemize}
\end{lemma}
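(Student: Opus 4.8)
The plan is to reduce every computation to a conditioning on the queue length $Q_0$ at the start of time-step $0$ in the equilibrium single-type queue of Section \ref{sec:basicqueue} (with arrival rate $\lambda$ and service rate $\mu$), using the fact that, given the queue lengths, the driving randomness $(A_i,S_i,B_i)$ at distinct times is independent. Two structural observations do most of the work: first, $\eta_i=\infty$ exactly when $S_i=\infty$, and $S_i$ is independent of the evolution up to time $i-1$; second, whenever $\eta_0=2$ (an unused service with no arrival) one has $Q_1=Q_0$, since neither an arrival nor a departure occurs at time $0$. Part (i) is then immediate: $\{\eta_1=\infty\}=\{S_1=\infty\}$ is independent of $\eta_0$, so $\nu_{\lambda,\mu-\lambda}(2,\infty)=\mathbb{P}(\eta_0=2)\,\mathbb{P}(S_1=\infty)=(\mu-\lambda)(1-\mu)$, using the known marginal $\mathbb{P}(\eta_0=2)=\mu-\lambda$.

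For (ii) and (iii) I first need $\mathbb{E}[q^{Q_0}]$ and $\mathbb{E}[q^{2Q_0}]$. Writing $G(z)=\mathbb{E}[z^{Q_0}]$ and applying detailed balance to the birth--death queue-length chain — up-rate $\lambda(1-\mu)$ from $k$ to $k+1$, down-rate $(1-\lambda)\mu(1-q^{k+1})$ from $k+1$ to $k$ — gives $\pi_{k+1}(1-q^{k+1})=\rho\,\pi_k$ with $\rho=\lambda(1-\mu)/[(1-\lambda)\mu]$, which sums to the functional equation $(1-\rho z)G(z)=G(qz)$. Evaluating at $z=1$ and $z=q$ yields $\mathbb{E}[q^{Q_0}]=G(q)=1-\rho=(\mu-\lambda)/[\mu(1-\lambda)]$ (consistent with $\mathbb{P}(\eta_0=2)=\mu(1-\lambda)\mathbb{E}[q^{Q_0}]$) and $\mathbb{E}[q^{2Q_0}]=G(q^2)=(1-\rho)(1-\rho q)$. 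Then (ii) follows since $\{\eta_0=2,\eta_1=2\}$ forces $Q_1=Q_0$ and, given $Q_0=k$, has probability $[\mu(1-\lambda)q^k]^2$, so $\nu(2,2)=\mu^2(1-\lambda)^2\,\mathbb{E}[q^{2Q_0}]$, which simplifies to $(\mu-\lambda)\big[(1-\lambda)\mu-q\lambda(1-\mu)\big]$. For (iii), given $Q_0=k$ the event $\eta_0=2$ has probability $\mu(1-\lambda)q^k$ and forces $Q_1=k$, while $\eta_1=1$ then has probability $\lambda\mu+(1-\lambda)\mu(1-q^k)$; summing against $\pi_k$ and using $\mathbb{E}[q^{Q_0}]-\mathbb{E}[q^{2Q_0}]=(1-\rho)\rho q$ gives $(\mu-\lambda)\big[\lambda\mu+q\lambda(1-\mu)\big]$.

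For (iv), set $F(x,y)=\mathbb{P}(W_0\le x,W_1\le y)$; off the diagonal the absolutely continuous density is $f(x,y)=\partial_x\partial_y F$, since the diagonal atom contributes $\int_0^{\min(x,y)}f^*$, a single-variable term annihilated by the mixed partial. Using the projection identity (\ref{projection2}): for $x<y$, $F(x,y)=\nu_{x,y-x}(\eta_0=1,\eta_1\ne\infty)$, and expanding shows $\nu_{x,y-x}(\eta_0=1,\eta_1\ne\infty)-\nu_{x,y-x}(2,\infty)=\lambda-\mu(1-\mu)$ (with $\lambda=x,\mu=y$), a sum of a function of $x$ and a function of $y$, so $f(x,y)=\partial_x\partial_y\,\nu_{x,y-x}(2,\infty)$. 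For $x>y$, $F(x,y)=\nu_{y,x-y}(\eta_0\ne\infty,\eta_1=1)$ and $\nu_{y,x-y}(\eta_0\ne\infty,\eta_1=1)-\nu_{y,x-y}(2,1)=\nu_{y,x-y}(1,1)=\lambda^2=y^2$, where the last equality uses that the projection onto type-$1$ particles is Bernoulli product measure; being a function of $y$ alone it is killed by $\partial_x\partial_y$, giving $f(x,y)=\partial_x\partial_y\,\nu_{y,x-y}(2,1)$. Finally the atom is extracted with the projection having $\lambda=x$, $\mu-\lambda=\epsilon$: the event $\{\eta_0=\eta_1=2\}$ is $\{W_0,W_1\in(x,x+\epsilon]\}$, whose probability is $f^*(x)\epsilon+O(\epsilon^2)$ (the continuous mass over an $\epsilon\times\epsilon$ square being $O(\epsilon^2)$), so $f^*(x)=\lim_{\epsilon\downarrow0}\epsilon^{-1}\nu_{x,\epsilon}(2,2)$.

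The one genuinely non-routine step is obtaining $\mathbb{E}[q^{2Q_0}]$: rather than solving explicitly for the stationary law (a $q$-series), the functional equation $(1-\rho z)G(z)=G(qz)$ supplies both $G(q)$ and $G(q^2)$ by evaluation at $z=1,q$. Everything else is conditioning on $Q_0$ together with the independence of $S_1$ and the identity $Q_1=Q_0$; part (iv) is bookkeeping, where the only care needed is to identify, in each region, which single-variable term of $F$ is annihilated by the mixed partial and to separate the $O(\epsilon^2)$ continuous mass from the diagonal atom.
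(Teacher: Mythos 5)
Your proof is correct and follows the paper's strategy very closely: part (i) is the same independence observation, parts (ii)--(iii) condition on the equilibrium queue length $Q_0$ of the same birth--death chain with recursion $\pi_{k+1}(1-q^{k+1})=\rho\,\pi_k$, and part (iv) is the same mixed-partial bookkeeping (your identification of the one-variable terms killed by $\partial_x\partial_y$, using $\nu(1,\infty)+\nu(2,\infty)=\mu(1-\mu)$ and $\nu(1,1)=\lambda^2$ from the Bernoulli projection, matches the paper's inclusion--exclusion step). The one place you genuinely diverge is the key identity behind (ii): the paper needs $\sum_k\pi_k q^{2k}\big/\sum_k\pi_k q^k=1-\rho q$ and proves it as the explicit $q$-series identity (\ref{aq}) by a telescoping reindexing of $\sum_k \alpha^k q^{2k}/[(1-q)\cdots(1-q^k)]$, whereas you package the recursion into the functional equation $(1-\rho z)G(z)=G(qz)$ for $G(z)=\E[z^{Q_0}]$ and read off $G(q)=1-\rho$ and $G(q^2)=(1-\rho)(1-\rho q)$ by evaluating at $z=1$ and $z=q$. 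This is cleaner and gives for free the higher values $G(q^m)=\prod_{j=0}^{m-1}(1-\rho q^j)$, which would be needed for $m$-point diagonal correlations. You also obtain (iii) by direct computation using $\E[q^{Q_0}]-\E[q^{2Q_0}]=(1-\rho)\rho q$ rather than the paper's complementation $(\mathrm{i})+(\mathrm{ii})+(\mathrm{iii})=\mu-\lambda$; both are valid, and yours doubles as a consistency check.
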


Note in particular that as $\mu-\lambda\to0$, the density of second-class particles goes to 0, but conditional on seeing a second-class particle at a given site, 
the probability that the next site also contains a second-class particle stays bounded away from 0. 

Part (i) is straightforward. The second or third part are then easily seen to follow from each other, but are more complicated to establish. Note that when $q=0$, an unused service can only occur when the queue is empty,
and the conditional probability of the output of the next time-slot is
then easy to deduce. For $q>0$ on the other hand, an unused service occurs
with probability $q^n$ when the queue-length is $n$. As a result, 
conditioning on an unused service constitutes an exponential tilting
of the stationary distribution of the system.

\begin{proof}[Proof of Lemma \ref{lemma:projections}]
For part (i), we need the probability of observing an unused service at time 0, 
followed by no service at time 1. The probability of an unused service at time 0 
is $\mu-\lambda$, and a service occurs at time 1 with probability $\mu$ independently
of everything that has occurred earlier, giving $(\mu-\lambda)(1-\mu)$ overall as required. 

We turn to part (ii). Since the probability of an unused service at time 1 is $\mu-\lambda$,
we need to show that conditional on an unused service at time 1, the probability 
of another unused service at time 2 is 
\begin{equation}\label{needcond}
(1-\lambda)\mu-q\lambda(1-\mu).
\end{equation}

First let's consider the process of the length of the queue after each service, 
which is a Markov chain. It's a birth-and-death chain on $\ZZ_+$, and for all $k>0$, 
\begin{align*}
p_{k-1,k} &=\lambda(1-\mu),
\\
p_{k,k-1} &=(1-\lambda)\mu(1-q^k).
\end{align*}
Hence its stationary distribution $(\pi_k, k\geq 0)$ satisfies 
\begin{align}
\label{pirecursion}
\pi_k&=\frac{\lambda}{1-\lambda}\frac{1-\mu}\mu \frac1{1-q^k}\pi_{k-1}
\\
\nonumber
&=\left(\frac\lambda{1-\lambda}\frac{1-\mu}\mu\right)^k
\frac1{1-q}\frac1{1-q^2}\dots\frac1{1-q^k}\pi_0.
\end{align}
The probability of seeing a second-class particle at a given site is
$\sum_k\pi_k(1-\lambda)\mu q^k$, while the probability of seeing
two such particles at a given pair of successive sites is
$\sum_k\pi_k(1-\lambda)^2\mu^2 q^{2k}$.

So we wish to show that (\ref{needcond}) 
is equal to 
\[
\frac{\sum_k\pi_k(1-\lambda)^2\mu^2 q^{2k}}{\sum_k\pi_k(1-\lambda)\mu q^k}.
\]
We claim that for any $\alpha<1$ and $q<1$, the following identity holds:
\begin{equation}\label{aq}
\sum_{k=0}^\infty \frac{\alpha^k q^{2k}}{(1-q)\dots(1-q^k)}
=(1-\alpha q)\sum_{k=0}^\infty \frac{\alpha^k q^k}{(1-q)\dots(1-q^k)}.
\end{equation}
The desired equality then follows from (\ref{aq}) with 
$\alpha=\frac{\lambda}{1-\lambda}\frac{1-\mu}\mu$.

To obtain (\ref{aq}), we may write the difference between the RHS and the LHS as
\begin{multline*}
\sum_{k=0}^{\infty}
\frac{
\alpha^k q^k-\alpha^{k+1}q^{k+1}-\alpha^k q^{2k}
}
{
(1-q)\dots(1-q^k)
}
\\
\begin{split}
&=\sum_{k=0}^\infty
\frac{
\alpha^k q^k(1-q^k)
}
{
(1-q)\dots(1-q^k)
}
-
\sum_{k=0}^\infty
\frac{
\alpha^{k+1} q^{k+1}
}
{
(1-q)\dots(1-q^k)
}
\\
&=\sum_{k=1}^\infty
\frac{
\alpha^k q^k(1-q^k)
}
{
(1-q)\dots(1-q^k)
}
-
\sum_{k=0}^\infty
\frac{
\alpha^{k+1} q^{k+1}
}
{
(1-q)\dots(1-q^k)
}
\\
&=\sum_{k=1}^\infty
\frac{
\alpha^k q^k
}
{
(1-q)\dots(1-q^{k-1})
}
-
\sum_{m=1}^\infty
\frac{
\alpha^{m} q^{m}
}
{
(1-q)\dots(1-q^{m-1})
}
\\
&=0,
\end{split}
\end{multline*}
as required.

This establishes part (ii). Now part (iii) follows 
because the expressions in (i), (ii) and (iii)
must sum to the probability of observing a
second-class particle at site 1, which is $\mu-\lambda$.

Finally, for part (iv), the claimed expressions for the density
$f(x,y)$ are continuous away from $x=y$ and the expression for $f^*(x)$
is continuous in $x$. Then for $f(x,y)$, it suffices to check that for $y<x$,
\begin{align*}
\textstyle\frac{d^2}{dxdy}\PP(W_0\leq x, W_1\leq y)
&=\textstyle\frac{d^2}{dxdy}\PP(y< W_0\leq x, W_1\leq y)\\
&=\textstyle\frac{d^2}{dxdy}\nu_{y,x-y}(2,1),
\end{align*}
and similarly for $x<y$,
\begin{align*}
\textstyle\frac{d^2}{dxdy}\PP(W_0\leq x, W_1\leq y)
&=\textstyle\frac{d^2}{dxdy}\PP(x< W_0\leq y, W_1>y)\\
&=\textstyle\frac{d^2}{dxdy}\nu_{x,y-x}(2,\infty).
\end{align*}
Finally on the diagonal, for $f^*(x)$ we have
\[
\lim_{\epsilon\to0}\frac{1}{\epsilon}\PP(W_0\in[x,x+\epsilon),
W_1\in[x,x+\epsilon))=\lim_{\epsilon\to0}\frac{1}{\epsilon}
\nu_{x,\epsilon}(2,2)
\]
as required. 
\end{proof}

For part (b) of Theorem \ref{thm:Wlimit}, we obtain
(\ref{fxy}) and (\ref{fstarx}) by substituting the 
expressions in parts (i)-(iii) of Lemma \ref{lemma:projections}
into part (iv). Then (\ref{W1W2}) follows by
integrating over $x$ and $y$ in (\ref{fxy}), and over $x$
in (\ref{fstarx}).

Finally we turn to part (c) of Theorem \ref{thm:Wlimit},
which is the most involved. It follows from the following result:

\begin{proposition}
\label{prop:infinitelymany}
Define a random set $\cU\subset\{1,2,\dots\}$ in the following way. 

Let $X\sim\textrm{Uniform}[0,1]$.

Define a random walk $Q_i, i\geq 1$ as follows. 
Choose $Q_1\geq 0$ from the distribution $(\tpi_k, k\geq 0)$
satisfying 
\begin{equation}\label{xxinitial}
\tpi_k=\frac{q}{1-q^k}\tpi_{k-1}.
\end{equation}
Now for each $i=1,2,\dots$, given $X=x$ and $Q_{i}=k$,
\begin{alignat}{2}
\nonumber
&\text {with probability } x(1-x): &&\text{ let }Q_{i+1}=Q_{i}+1, \text{ and } i\notin \cU.
\\
\nonumber
&\text {with probability } x^2+(1-x)^2: &&\text{ let }Q_{i+1}=Q_{i}, \text{ and } i\notin \cU.
\\
\label{xxwalk}
&\text {with probability } x(1-x)q^k: &&\text{ let }Q_{i+1}=Q_{i}, \text{ and } i\in \cU.
\\
\nonumber
&\text {with probability } x(1-x)(1-q^k): &&\text{ let }Q_{i+1}=Q_{i}-1, \text{ and } i\notin \cU.
\end{alignat}
Then the set $\cU$ is infinite with probability $1$, 
and is stochastically dominated by the set 
$\{i\geq 1: W_i=W_0\}$. 
\end{proposition}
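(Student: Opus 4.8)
The plan is to prove the two assertions separately: that $\cU$ is infinite almost surely, which is a self-contained fact about the random walk in (\ref{xxwalk}); and that $\cU$ is stochastically dominated by the convoy $\cC:=\{i\ge1:W_i=W_0\}$, which requires linking the abstract walk to the queueing construction of $W$ conditioned on $\{W_0=x\}$, together with a monotone coupling.

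For the infiniteness, I would first condition on $X=x$, so that $(Q_i)_{i\ge1}$ becomes a time-homogeneous birth--death chain on $\ZZ_+$ with up-probability $x(1-x)$ from every state and down-probability $x(1-x)(1-q^k)$ from state $k$. The factors $x(1-x)$ cancel in the ratio of down- to up-probabilities, which is $1-q^k$, and $\prod_{j=1}^{n}(1-q^j)\to\prod_{j\ge1}(1-q^j)>0$ for $0<q<1$; hence $\sum_{n}\prod_{j=1}^{n}(1-q^j)=\infty$ and the chain is recurrent. (Its invariant measure $\prod_{j=1}^{k}(1-q^j)^{-1}$ is bounded below and therefore non-summable, so the chain is in fact null recurrent, but recurrence is all that is needed.) Recurrence guarantees that state $0$ is visited infinitely often; at each visit the transition taken is the ``$i\in\cU$'' alternative of (\ref{xxwalk}) with probability $x(1-x)>0$, independently of the past. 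A conditional Borel--Cantelli argument over the successive visits to $0$ then shows that infinitely many of them lie in $\cU$, so $\cU$ is infinite almost surely for a.e.\ $x\in(0,1)$, hence almost surely.

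The domination is the substantial part. Recall that $\{W_0=x\}$ is an atom and that the convoy can be recovered as a decreasing limit of two-type projections: taking first class $\{W\le x-\epsilon\}$ and second class $\{W\in(x-\epsilon,x]\}$ gives a $2$-type ASEP with densities $\lambda=x-\epsilon$ and $\mu-\lambda=\epsilon$, whose second-class sites are $\{i:W_i\in(x-\epsilon,x]\}$, and $\{W_i\in(x-\epsilon,x]\}\downarrow\{W_i=x\}$ as $\epsilon\downarrow0$. By Theorem \ref{thm:2statline} this projection is the departure process of a \emph{stable} queue with arrival rate $x-\epsilon<$ service rate $x$, whose second-class particles are exactly the unused services. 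Conditioning on an unused service at the origin tilts the queue-length immediately afterwards to a law proportional to $\pi^{(\epsilon)}_kq^k$; the computation behind Lemma \ref{lemma:projections} shows this converges to $\tpi$ and that the conditioned queue-length kernel converges to (\ref{xxwalk}) as $\epsilon\downarrow0$. I would then couple the critical walk $Q$ of (\ref{xxwalk}) with all the subcritical $\epsilon$-queues on one probability space, using a single rate-$x$ arrival stream thinned to rate $x-\epsilon$, common service times, and common geometric marks $(B_i)$. Because the update map of (\ref{fdef}) is monotone in the current queue-length and in the arrivals, and because the tilted initial laws are stochastically ordered (the forward ratio $\alpha^{(\epsilon)}q/(1-q^{k+1})$ being smaller than $\tpi_{k+1}/\tpi_k=q/(1-q^{k+1})$), this coupling yields $Q^{(\epsilon)}_i\le Q_i$ for all $i$. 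An unused service at queue-length $k$ occurs exactly when there is a service, no arrival, and $B_i\ge k$; since $\{B_i\ge Q_i\}\subseteq\{B_i\ge Q^{(\epsilon)}_i\}$ and the thinned process has no extra arrivals, every $i\in\cU$ is an unused service of the $\epsilon$-queue. Hence $\cU$ is dominated by $\{i:W_i\in(x-\epsilon,x]\}$, and letting $\epsilon\downarrow0$ gives $\cU\preceq\cC$.

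The main obstacle is precisely this last step: since $\{W_0=x\}$ has probability zero, the identification of the conditioned $\epsilon$-queue with the convoy, and the interchange of the limit $\epsilon\downarrow0$ with the conditioning, must be routed through the ring-to-line limiting procedure of Section \ref{sec:Z} and Proposition \ref{prop:locallimit}; it is this one-sided control that produces domination rather than equality, which is all that is required. Combining the two halves, $\cU$ infinite together with $\cU\preceq\cC$ forces $\cC$ to be infinite almost surely conditional on $\{W_0=x\}$ for a.e.\ $x$; integrating over $W_0\sim\mathrm{Uniform}[0,1]$ then yields Theorem \ref{thm:Wlimit}(c).
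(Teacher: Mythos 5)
Your first half (infiniteness of $\cU$) is correct and matches the paper's argument, with the added bonus of actually verifying recurrence of the birth--death chain via $\prod_{j\le n}(1-q^j)$ being bounded below, which the paper only asserts. Your coupling of the critical walk with the subcritical $(x-\epsilon,x)$-queue is also essentially sound and plays the role of the paper's Lemma \ref{lemma:AlmAxxcomparison} (there stated as a comparison of transition kernels and tilted initial laws for general $\lambda\le x\le\mu$), and your identification of the tilted queue with the two-type projection is the content of Lemma \ref{lemma:Almqueue}.

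The genuine gap is exactly the step you flag as ``the main obstacle'' and then do not carry out: passing from ``$\cU^{x,x}$ is dominated by $\{i:W_i\in(x-\epsilon,x]\}$ \emph{conditional on} $\{W_0\in(x-\epsilon,x]\}$'' to ``$\cU$ is dominated by $\{i:W_i=W_0\}$''. The conditioning event shrinks to a null set as $\epsilon\downarrow0$ (your opening claim that $\{W_0=x\}$ is an atom is false, as you yourself note later), so the family of dominated sets does not live on a single probability space with a fixed conditioning, and one cannot simply ``let $\epsilon\downarrow0$''; routing this through Proposition \ref{prop:locallimit} is not an argument. The paper's resolution avoids conditioning on null events altogether: partition $[0,1]$ into $N$ equal intervals $[m/N,(m+1)/N]$, note that for $W_0$ in such an interval the set $\{i>0:m/N\le W_i<(m+1)/N\}$ (which by Lemmas \ref{lemma:AlmAxxcomparison} and \ref{lemma:Almqueue} dominates $\cU$ given $X$ in the same interval) is contained in $\{i>0:|W_i-W_0|<1/N\}$, and then use that \emph{both} $W_0$ and $X$ are Uniform$[0,1]$ to average over $m$ with equal weights $1/N$ and obtain the \emph{unconditional} domination of $\cU$ by $\{i>0:|W_i-W_0|<1/N\}$ for every $N$. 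Since these sets decrease to $\{i>0:W_i=W_0\}$ as $N\to\infty$, domination passes to the intersection. This averaging-plus-monotone-intersection device is the missing idea; without it, your proof of the domination (and hence of Theorem \ref{thm:Wlimit}(c)) is incomplete.
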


Here $Q$ plays the role of a queue-length process,
and $\cU$ plays the role of the set of unused service times,
in a queue whose arrival rate and service rate are both equal to $x$. 
(Such a queue is null recurrent.)

Although here we only claim that $\cU$ is a stochastic lower bound
for the set $\{i\geq 1: W_i=W_0$\}, it actually holds that the two
have the same distribution. In fact, we can be more precise: 
there is a system of regular conditional probabilities such 
that conditional on $W_0=x$, the distribution of $\{i\geq1:W_i=W_0\}$
is the distribution of $\cU$ obtained by the construction of Proposition 
\ref{prop:infinitelymany} given $X=x$. 
The argument to establish these stronger statements may be
apparent from the proof below, but we won't fill in the details.

\newcommand{\Qlm}{Q^{\lambda,\mu}}
\newcommand{\Alm}{\cU^{\lambda,\mu}}
\newcommand{\tpilm}{\tpi^{\lambda,\mu}}

The rest of this section is devoted to the proof of Proposition 
\ref{prop:infinitelymany}. First we compare the construction 
in Proposition \ref{prop:infinitelymany} to the queue-length
construction of the $2$-type stationary distribution on $\ZZ$:

\begin{lemma}\label{lemma:AlmAxxcomparison}
Fix any $\lambda, \mu$ with $0\leq \lambda < \mu \leq 1$. 

Define a random walk $\Qlm$ and a random set 
$\Alm\subset\{1,2,\dots\}$ as follows. 

Choose $\Qlm_1$ from the distribution $(\tpilm_r, r\geq 0)$ 
satisfying 
\begin{equation}\label{lminitial}
\tpilm_r=\frac{\lambda}{1-\lambda}\frac{1-\mu}{\mu}\frac{q}{1-q^r}\tpi_{r-1}.
\end{equation}
Now for each $i=1,2,\dots$, given $\Qlm_{i}=r$,
\begin{alignat}{2}
\nonumber
&\text {with probability } \lambda(1-\mu): &&\text{ let }\Qlm_{i+1}=\Qlm_{i}+1, \text{ and } i\notin \Alm.
\\
\nonumber
&\text {with probability } \lambda\mu+(1-\lambda)(1-\mu): &&\text{ let }\Qlm_{i+1}=\Qlm_{i}, \text{ and } i\notin \Alm.
\\
\label{lmwalk}
&\text {with probability } (1-\lambda)\mu q^r: &&\text{ let }\Qlm_{i+1}=\Qlm_{i}, \text{ and } i\in \Alm.
\\
\nonumber
&\text {with probability } (1-\lambda)\mu(1-q^r): &&\text{ let }\Qlm_{i+1}=\Qlm_{i}-1, \text{ and } i\notin \Alm.
\end{alignat}

\newcommand{\Axx}{\cU^{x,x}}

Write $\Axx$ for a set whose distribution is that of the set $\cU$ in Proposition \ref{prop:infinitelymany} conditioned on $X=x$. 
For $\lambda\leq x\leq \mu$, the set $\Alm$ 
stochastically dominates the set $\Axx$. 
\end{lemma}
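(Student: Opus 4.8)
The plan is to realise both queues on a single probability space from their arrival/service/mark data, as in Section~\ref{sec:mainproofs}, choosing all the randomness monotonically, and then to read off the unused-service sets. At each time $i$ I would use three independent ingredients: a uniform $U_i$, a uniform $V_i$, and a geometric$(q)$ mark $B_i$, independent over $i$. For the $(\lambda,\mu)$-queue set $A^{\lambda,\mu}_i=1$ iff $U_i<\lambda$ and $S^{\lambda,\mu}_i=1$ iff $V_i<\mu$; for the critical queue (rates $x,x$) set $A^{x,x}_i=1$ iff $U_i<x$ and $S^{x,x}_i=1$ iff $V_i<x$. Since $\lambda\le x\le\mu$, every arrival of the $(\lambda,\mu)$-queue is an arrival of the critical queue, while every service of the critical queue is a service of the $(\lambda,\mu)$-queue. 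The mark $B_i$ is shared, with the rule (as in \eqref{consistencycondition}) that a service offered to a queue of length $k$ is used iff $B_i<k$. A direct check shows these coupled updates reproduce the one-step laws \eqref{lmwalk} and \eqref{xxwalk}, so the two marginal processes have the correct distributions.

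For the starting states I would compare $\tpilm$ and $\tpi$. Writing $\beta=\frac{\lambda}{1-\lambda}\frac{1-\mu}{\mu}$, the recursions \eqref{xxinitial} and \eqref{lminitial} give $\tpilm_r/\tpilm_{r-1}=\beta\,q/(1-q^r)$ and $\tpi_r/\tpi_{r-1}=q/(1-q^r)$, so the likelihood ratio $\tpilm_r/\tpi_r$ is proportional to $\beta^r$. As $\lambda<\mu$ forces $\beta<1$, this ratio is non-increasing in $r$, so $\tpilm$ lies below $\tpi$ in the monotone-likelihood-ratio order and hence in the usual stochastic order; I may therefore couple the initial states so that $\Qlm_1\le Q^{x,x}_1$.

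The crux — and the step I expect to be the genuine obstacle — is to prove by induction that $\Qlm_i\le Q^{x,x}_i$ for all $i$ under this coupling. The subtlety is that each queue's downward drift depends on its own length through $1-q^r$, so a naive comparison of rates does not yield monotonicity. The shared mark $B_i$ repairs this: when both queues have no arrival and an offered service and sit at the same level $r$, each uses the service precisely when $B_i<r$, so they step together; in every other configuration the monotone coupling of arrivals (an arrival of the lower forces one in the upper) and of services (a service of the upper forces one in the lower) already controls the increments. A short case analysis over the jointly possible arrival/service outcomes then shows that whenever $\Qlm_i\le Q^{x,x}_i$ the coupled increment of the lower queue is at most that of the upper, the only delicate configurations being $Q^{x,x}_i\in\{\Qlm_i,\Qlm_i+1\}$, since a gap of two or more cannot be closed by a single $\pm1$ step.

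Granting $\Qlm_i\le Q^{x,x}_i$ for all $i$, the domination of the sets is immediate. If $i\in\cU^{x,x}$ then in the critical queue there is no arrival, a service is offered, and $B_i\ge Q^{x,x}_i$; by the coupling the $(\lambda,\mu)$-queue then also has no arrival and an offered service, and $B_i\ge Q^{x,x}_i\ge\Qlm_i$, so its service at $i$ is likewise unused, i.e.\ $i\in\Alm$. Hence $\cU^{x,x}\subseteq\Alm$ almost surely, which is exactly the asserted stochastic domination of $\cU^{x,x}$ by $\Alm$.
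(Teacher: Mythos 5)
Your proof is correct and follows essentially the same route as the paper's: a monotone coupling of the two walks that maintains $Q^{\lambda,\mu}_i\le Q^{x,x}_i$ (resting on the same three comparisons --- of the initial distributions, of the up/down-step probabilities, and of the unused-service probabilities $x(1-x)q^k\le(1-\lambda)\mu q^r$ for $k\ge r$), followed by the observation that under such a coupling $\cU^{x,x}\subseteq\cU^{\lambda,\mu}$ almost surely. The only real difference is presentational: the paper asserts the existence of the monotone coupling directly from the ordering of the transition rates, whereas you realise it explicitly through shared uniforms and a shared geometric mark, which has the merit of making the delicate adjacent-state cases (and the fact that the shared mark forces the two queues to step together when at equal height) completely transparent.
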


\begin{proof}
We compare the initial distributions and transition probabilities for the
``$(\lambda,\mu)$-walk" at (\ref{lminitial})-(\ref{lmwalk}) and the ``$(x,x)$-walk" at (\ref{xxinitial})-(\ref{xxwalk}). 

Since $\lambda<\mu$, the distribution defined by (\ref{lminitial})
is dominated by the distribution defined by (\ref{xxinitial}). 
Furthermore, the ``up-step" probability $\lambda(1-\mu)$ in 
(\ref{lmwalk}) is smaller than the corresponding probability $x(1-x)$ 
in (\ref{xxwalk}), and the down-step probability 
$(1-\lambda)\mu(1-q^r)$ is larger than the corresponding 
probability $x(1-x)(1-q^k)$ when $r=k$. It follows that 
there is a coupling of the walks $(Q_i, \Qlm_i, i\geq 1)$
which always stays in the set $\{(k,r): k\geq r\}$. 

Further, the probability of including the next time-step $i$ 
in the set $\cU^{x,x}$ (or $\Alm$ respectively) is smaller
at (\ref{xxwalk}) than at (\ref{lmwalk}) whenever $k\geq r$,
since then $x(1-x)q^k<(1-\lambda)\mu q^r$. 
From this it's straightforward to extend the coupling
in the previous paragraph to a coupling of $(Q, \Qlm, \cU^{x,x}, \Alm)$
such that with probability $1$, $\cU^{x,x}\subseteq\Alm$, as required. 
\end{proof}

\begin{lemma}\label{lemma:Almqueue}
The distribution of the set $\Alm$ defined in Lemma
\ref{lemma:AlmAxxcomparison} 
is the same as the distribution of
the set $\{i>0: \lambda\leq W_i < \mu\}$ conditional on the event
$\{\lambda \leq W_0\leq \mu\}$. 
\end{lemma}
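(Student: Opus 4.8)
The plan is to identify the abstract random walk $\Qlm$ together with its marked set $\Alm$ with the concrete queue-length chain underlying the construction of $\nu_{\lambda,\mu-\lambda}$ in Theorem \ref{thm:2statline}, and then to read off the required conditional law from an elementary Bayes-and-Markov argument.

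First I would invoke the projection behind (\ref{projection2}). Applying the map $h$ of (\ref{fprojection}) with $N=2$, $u_1=\lambda$, $u_2=\mu$, the projected configuration $\eta_i:=h(W_i)$ is distributed according to $\nu_{\lambda,\mu-\lambda}$. Since each $W_i$ has a continuous (uniform) marginal, almost surely no $W_i$ equals $\lambda$ or $\mu$; hence the event $\{\lambda\leq W_0\leq\mu\}$ agrees up to a null set with $\{\eta_0=2\}$, and the random set $\{i>0:\lambda\leq W_i<\mu\}$ agrees almost surely with $\{i>0:\eta_i=2\}$. It therefore suffices to identify the conditional law, given a second-class particle at the origin, of the set of second-class particles strictly to its right.

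Next I would realise $\eta$ through the equilibrium queue of Theorem \ref{thm:2statline}, under which $\eta_i=2$ exactly when an unused service occurs at time $i$. Comparing the four cases of the queue-length transition in Section \ref{sec:basicqueue} with the four lines of (\ref{lmwalk}), one sees that the pair $(\Qlm,\Alm)$ is nothing but the queue-length Markov chain run forward from time $1$, with the marks $i\in\Alm$ recording exactly the unused-service times. So the lemma reduces to describing, in the stationary queue, the law of the unused services at times $i\geq1$ conditional on an unused service at time $0$.

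The one point requiring care is the initial distribution. In equilibrium $\PP(Q_0=r)=\pi_r$, while the conditional probability of an unused service at time $0$ given $Q_0=r$ is $(1-\lambda)\mu q^r$; since an unused service leaves the queue unchanged, $Q_1=Q_0$. By Bayes' rule the conditional law of $Q_1$ is therefore proportional to $\pi_r q^r$, and substituting the recursion (\ref{pirecursion}) for $\pi_r$ one checks that this is precisely the tilted distribution $\tpilm$ of (\ref{lminitial}), the extra factor $q$ appearing in (\ref{lminitial}) relative to (\ref{pirecursion}) being exactly this tilt. Finally, by the Markov property of the queue-length chain the marks and transitions at times $i\geq1$ are independent of the conditioning at time $0$ given $Q_1$, so conditionally on $Q_1$ the forward evolution and its unused-service marks follow the standard transitions; averaging over $Q_1\sim\tpilm$ yields exactly the set $\Alm$. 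I expect the verification of this $q^r$ tilt to be the main thing to get right, the rest being a direct dictionary between the queue and the walk.
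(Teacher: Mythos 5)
Your proposal is correct and follows essentially the same route as the paper: project $W$ via $h$ with $u_1=\lambda$, $u_2=\mu$ to get a configuration with law $\nu_{\lambda,\mu-\lambda}$, identify the walk (\ref{lmwalk}) with marks $\Alm$ as the equilibrium queue-length chain with its unused-service times, and check that the tilted initial law $\tpilm$ of (\ref{lminitial}) is the equilibrium queue-length conditioned on an unused service at time $0$ (proportional to $q^r\pi_r$ via (\ref{pirecursion})). Your explicit treatment of the boundary null sets and the Bayes/Markov step simply fills in details the paper leaves implicit.
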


\begin{proof}
Consider the projection $f$ from (\ref{fprojection}),
with $u_1=\lambda$ and $u_2=\mu$. The configuration $h(W)
=(h(W_i), i\in\ZZ)$ has the 
$2$-type stationary
distribution $\nu_{\lambda,\mu-\lambda}$
In particular, the sites of type-$2$ particles in $f(W)$
are precisely those $i$ such that $\lambda\leq W_i<\mu$.

The walk (\ref{lmwalk}) is exactly the queue-length process 
with arrival rate $\lambda$ and service rate $\mu$
used to generate the stationary distribution $\nu_{\lambda, \mu-\lambda}$ of the $2$-type ASEP on $\ZZ$, and the adding of a point
to $\Alm$ corresponds to an unused service in the queue. 

Finally, the initial distribution $\tpilm$ of $\Qlm_1$ is 
the distribution of the queue-length in equilibrium 
conditioned on an unused service having occurred at the previous
time-step.
To verify this, note that such a distribution should be 
proportional to $q^k\pi_k$
where $\pi_k$, the unconditioned equilibrium queue-length 
distribution, satisfies (\ref{pirecursion}). This gives  
the recursion (\ref{lminitial}) as required.
\end{proof}

Combining the last two lemmas, we have that the distribution of the set 
$\{i>0: \lambda\leq W_i < \mu\}$  conditional on the event
$\{\lambda \leq W_0\leq \mu\}$
dominates the distribution of the set $\cU$ conditional on 
$\lambda\leq X<\mu$.

Fix any $N$, and let $m\in\{0,1,\dots, N-1\}$.
As just observed, conditional on 
$\{m/N \leq W_0\leq (m+1)/N\}$,
the set
$\{i>0: m/N\leq W_i < (m+1)/N\}$
dominates the distribution of $\cU$ given 
$\{m/N\leq X < (m+1)/N$.

But if $m/N\leq W_0\leq (m+1)/N$, then the 
set
$\{i>0: m/N\leq W_i < (m+1)/N\}$
is contained in the
set $\{i>0:|W_i-W_0|<1/N\}$.
So we get that conditional on 
$m/N\leq W_0\leq (m+1)/N$,
the set $\{i>0:|W_i-W_0|<1/N\}$
dominates the distribution of $\cU$
given $\{m/N\leq X < (m+1)/N$.

But both $W_0$ and $X$ have Uniform$[0,1]$ distribution,
and so each lies in any interval $[m/n, (m+1)/N]$ with 
probability $1/N$. 
So we may multiply by $1/N$ and sum over $m$,
to obtain that the set $\{i>0:|W_i-W_0|<1/N\}$ 
dominates the set $\cU$.

But also the sets $\{i>0:|W_i-W_0|<1/N\}$ form a decreasing
family as $N$ increases. If they dominate $\cU$ for every $N$,
then also their intersection must dominate $\cU$. 
But the intersection is exactly the set $\{i>0:W_i=W_0\}$. 

Finally note that $\cU$ is infinite with probability $1$,
since conditional on any value of $X=x\in (0,1)$, the 
walk given by (\ref{xxwalk}) is null recurrent,
and each time the walk hits $0$ the next site is added to
the set $\cU$ with probability $x(1-x)$ independently of 
all past choices. 

This completes the proof of Proposition \ref{prop:infinitelymany},
and hence of Theorem \ref{thm:Wlimit}.

\section{Alternative queueing construction}
\label{sec:alternative}
The queueing discipline used in the constructions in 
this paper has a somewhat unnatural feature, in that
a customer who has just arrived at the queue is treated differently
from one who was already present since the previous time-step.
A customer who has just arrived will always accept any
offered service, while a customer who was previously present
rejects any offer with probability $q$.

One can also apply equivalent constructions with this
distinction removed. Now each customer rejects
each service with probability $q$, irrespective
of time of arrival. We conjecture that this alternative
model gives exactly the same distribution, and in particular realises
the multi-type ASEP stationary distribution
(both on $\ZZ$ and on $\ZZ_L$). 

This has been computationally verified for values of $L$ up to 6. 
In the simplest case $N=2$, it is not hard to verify more generally
using a slightly different matrix product realisation. 
Rather than taking $X_1^{(2)}=I+\mepsilon$, $X_2^{(2)}=\malpha$,
$X_\infty^{(2)}=I+\mdelta$ where $\mepsilon$, $\malpha$
and $\mdelta$ have the form written at (\ref{adedef}),
one can take instead 
\begin{gather*}
\nonumber
X_2^{(2)}=A:=\begin{pmatrix} 
1 & q & 0 & 0 & \ldots \\
0 & q & q^2 & 0 &\ldots \\
0 & 0 & q^2 & q^3 &\ldots \\
0 & 0 & 0 & q^3 & \ldots \\
\vdots & \vdots & \vdots & \vdots & \ddots 
\end{pmatrix},
\,\,\, 
X_\infty^{(2)}=E:=\begin{pmatrix} 
1 & 1 & 0 & 0 & \ldots \\
0 & 1 & 1 & 0 & \ldots \\
0 & 0 & 1 & 1 & \ldots \\
0 & 0 & 0 & 1 & \ldots \\
\vdots & \vdots & \vdots & \vdots & \ddots 
\end{pmatrix},\\ 
X_1^{(2)}=D:=\begin{pmatrix} 
1-q & 0 & 0 & 0 &\ldots \\
1-q & 1-q^2 & 0 & 0 &\ldots \\
0 & 1-q^2 & 1-q^3 & 0 &\ldots \\
0 & 0 & 1-q^3 & 1-q^4 & \ldots \\
\vdots & \vdots & \vdots & \vdots & \ddots 
\end{pmatrix}.
\end{gather*}
One can then verify that these satisfy the following
quadratic algebra of \cite{DJLStasep}:
\begin{gather}
\nonumber
AD=qDA+(1-q)A\\
\label{newQuadraticRelations}
EA=qAE+(1-q)A\\
\nonumber
ED-qDE=(1-q)(E+D).
\end{gather}
(The relation between these identities and (\ref{fundamentalrelations})
is that if $\delta$, $\epsilon$, $\alpha$ satisfy (\ref{fundamentalrelations}),
then $D=I+\delta$, $E=I+\epsilon$ and $A=\alpha$ satisfy 
(\ref{newQuadraticRelations}).)

To extend this from $N=2$ to higher $N$ along the lines of 
the proof above, we would need a variant of the 
result of Theorem \ref{PEMtheorem} covering
a different recursive system from the one presented
at (\ref{adef}). The new system would have more 
non-zero terms (in particular, since we can now have
a departure of lower priority than arrival, 
we would also have non-zero terms $a_{m,n}^{(N)}$ for all $m<n\leq N$
as in the first line of (\ref{adef})). 

An alternative approach comes from arguments based on dynamic reversibility, involving dynamics defined on the set of multi-line diagrams, as done originally for the TASEP in 
\cite{FerMarmulti}. Such arguments may also make it possible to extend to $q>0$ the ``generalised" multi-line construction described for the TASEP in 
\cite{AAMPtasep}, which extend the construction of 
\cite{FerMarmulti}, restoring the symmetry between particles
and holes and establishing a richer ``Hasse diagram" structure
connecting systems with different numbers of particle types. 

\section*{Acknowledgments}
Many thanks to Omer Angel, Arvind Ayyer, and Svante Linusson for valuable conversations related to this work, and to Lauren Williams and Sylvie Corteel for information about their recent work \cite{CorteelMandelshtamWilliams}.
I'm very grateful to two referees whose comments and suggestions
have considerably improved the paper.
\bibliographystyle{usual2}
\bibliography{jbm}

\begin{thebibliography}{10}

\bibitem{AlcarazRittenberg}
\textsc{Alcaraz, F.~C. and Rittenberg, V.},  (1993) Reaction-diffusion
  processes as physical realizations of {H}ecke algebras.
\newblock \emph{Phys. Lett. B} \textbf{314}, 377--380.

\bibitem{AAV}
\textsc{Amir, G., Angel, O. and Valk{\'o}, B.},  (2011) The TASEP speed
  process.
\newblock \emph{Ann. Probab.} \textbf{39}, 1205--1242.

\bibitem{Angeltasep}
\textsc{Angel, O.},  (2006) The stationary measure of a 2-type totally
  asymmetric exclusion process.
\newblock \emph{J. Combin. Theory Ser. A} \textbf{113}, 625--635.

\bibitem{AngelHolroydHutchcroftLevy}
\textsc{Angel, O., Holroyd, A.~E., Hutchcroft, T. and Levy, A.},  (2018)
  Mallows permutations as stable matchings.
\newblock \texttt{arXiv:1802.07142}.

\bibitem{arita2012remarks}
\textsc{Arita, C.},  (2012) Remarks on the multi-species exclusion process with
  reflective boundaries.
\newblock \emph{J. Phys. A: Math. Theor.} \textbf{45}, 155001.

\bibitem{AAMPtasep}
\textsc{{Arita}, C., {Ayyer}, A., {Mallick}, K. and {Prolhac}, S.},  (2011)
  {Recursive structures in the multispecies TASEP}.
\newblock \emph{J. Phys. A Math. Gen.} \textbf{44}, G5004.

\bibitem{AritaMallick}
\textsc{Arita, C. and Mallick, K.},  (2013) Matrix product solution of an
  inhomogeneous multi-species TASEP.
\newblock \emph{J. Phys. A Math. Theor.} \textbf{46}, 085002.

\bibitem{AyyerLinusson}
\textsc{Ayyer, A. and Linusson, S.},  (2014) An inhomogeneous multispecies
  TASEP on a ring.
\newblock \emph{Adv. in Appl. Math.} \textbf{57}, 21--43.

\bibitem{AyyerLinussonCorrelations}
\textsc{Ayyer, A. and Linusson, S.},  (2017) Correlations in the multispecies
  {TASEP} and a conjecture by {L}am.
\newblock \emph{Transactions of the American Mathematical Society}
  \textbf{369}, 1097--1125.

\bibitem{BelitskySchutz}
\textsc{Belitsky, V. and Sch{\"u}tz, G.~M.},  (2018) Self-duality and shock
  dynamics in the $n$-component priority {ASEP}.
\newblock \emph{Stoch. Proc. Appl.} \textbf{128}, 1165--1207.

\bibitem{BorodinCorwinqTASEP}
\textsc{Borodin, A. and Corwin, I.},  (2013) Discrete time $q$-{TASEP}s.
\newblock \emph{Int. Math. Res. Notices} \textbf{2015}, 499--537.

\bibitem{BraLigMou}
\textsc{Bramson, M., Liggett, T.~M. and Mountford, T.},  (2002)
  Characterization of stationary measures for one-dimensional exclusion
  processes.
\newblock \emph{Ann. Probab.} \textbf{30}, 1539--1575.

\bibitem{Cantini-inhom}
\textsc{Cantini, L.},  (2016) Inhomogenous Multispecies TASEP on a ring with
  spectral parameters.
\newblock \texttt{arXiv:1602.07921}.

\bibitem{CdGW}
\textsc{Cantini, L., de~Gier, J. and Wheeler, M.},  (2016) Matrix product and
  sum rule for Macdonald polynomials.
\newblock \texttt{arXiv:1602.04392}.

\bibitem{CGdGW}
\textsc{Cantini, L., Garbali, A., de~Gier, J. and Wheeler, M.},  (2016)
  Koornwinder polynomials and the stationary multi-species asymmetric exclusion
  process with open boundaries.
\newblock \emph{J. Phys. A: Math. Theor.} \textbf{49}, 444002.

\bibitem{CorteelMandelshtamWilliams}
\textsc{Corteel, S., Mandelshtam, O. and Williams, L.},  (2018) From multiline
  queues to {M}acdonald polynomials via the exclusion process.
\newblock \texttt{arXiv:1811.01024}.

\bibitem{CrampeFinnRagoucyVanicat2016}
\textsc{Crampe, N., Finn, C., Ragoucy, E. and Vanicat, M.},  (2016) Integrable
  boundary conditions for multi-species {ASEP}.
\newblock \emph{J. Phys. A: Math. Theor.} \textbf{49}, 375201.

\bibitem{DEHP}
\textsc{Derrida, B., Evans, M.~R., Hakim, V. and Pasquier, V.},  (1993) Exact
  solution of a 1D asymmetric exclusion model using a matrix formulation.
\newblock \emph{J. Phys. A Math. Gen.} \textbf{26}, 1493.

\bibitem{DJLStasep}
\textsc{Derrida, B., Janowsky, S.~A., Lebowitz, J.~L. and Speer, E.~R.},
  (1993) Exact solution of the totally asymmetric simple exclusion process:
  shock profiles.
\newblock \emph{J. Statist. Phys.} \textbf{73}, 813--842.

\bibitem{DucSch}
\textsc{Duchi, E. and Schaeffer, G.},  (2005) A combinatorial approach to
  jumping particles.
\newblock \emph{J.\ Combin.\ Theory Ser.\ A} \textbf{110}, 1--29.

\bibitem{EvansFerrariMallick}
\textsc{Evans, M.~R., Ferrari, P.~A. and Mallick, K.},  (2009) Matrix
  representation of the stationary measure for the multispecies TASEP.
\newblock \emph{Journal of Statistical Physics} \textbf{135}, 217--239.

\bibitem{FanSepp}
\textsc{Fan, W.-T.~L. and Sepp{\"a}l{\"a}inen, T.},  (2018) Joint distribution
  of Busemann functions for the exactly solvable corner growth model.
\newblock \texttt{arXiv:1808.09069}.

\bibitem{FFKtasep}
\textsc{Ferrari, P.~A., Fontes, L. R.~G. and Kohayakawa, Y.},  (1994) Invariant
  measures for a two-species asymmetric process.
\newblock \emph{J. Statist. Phys.} \textbf{76}, 1153--1177.

\bibitem{FerGonMar}
\textsc{Ferrari, P.~A., Gon\c{c}alves, P. and Martin, J.~B.},  (2009) Collision
  probabilities in the rarefaction fan of asymmetric exclusion processes,.
\newblock \emph{Ann. Inst. Henri Poincar\'e Probab. Stat.} \textbf{45},
  1048--1064.

\bibitem{FerrariKipnisSaada}
\textsc{Ferrari, P.~A., Kipnis, C. and Saada, E.},  (1991) Microscopic
  structure of travelling waves in the asymmetric simple exclusion process.
\newblock \emph{Ann. Probab.} \textbf{19}, 226--244.

\bibitem{FerMarmulti}
\textsc{Ferrari, P.~A. and Martin, J.~B.},  (2007) Stationary distributions of
  multi-type totally asymmetric exclusion processes.
\newblock \emph{Ann. Probab.} \textbf{35}, 807--832.

\bibitem{FerMarHAD}
\textsc{Ferrari, P.~A. and Martin, J.~B.},  (2009) Multiclass
  {H}ammersley-{A}ldous-{D}iaconis process and multiclass-customer queues.
\newblock \emph{Ann. Inst. Henri Poincar\'e Probab. Stat.} \textbf{45},
  250--265.

\bibitem{FinnRagoucyVanicat2018}
\textsc{Finn, C., Ragoucy, E. and Vanicat, M.},  (2018) Matrix product solution
  to multi-species {ASEP} with open boundaries.
\newblock \emph{J. Stat. Mech.} P043201.

\bibitem{GneOls1sided}
\textsc{Gnedin, A. and Olshanski, G.},  (2010) {$q$}-exchangeability via
  quasi-invariance.
\newblock \emph{Ann. Probab.} \textbf{38}, 2103--2135.

\bibitem{GneOls2sided}
\textsc{Gnedin, A. and Olshanski, G.},  (2012) The two-sided infinite extension
  of the Mallows model for random permutations.
\newblock \emph{Adv. in Appl. Math.} \textbf{48}, 615--639.

\bibitem{KMO-ZRP-inhom}
\textsc{Kuniba, A., Maruyama, S. and Okado, M.},  (2016) Inhomogeneous
  generalization of a multispecies totally asymmetric zero range process.
\newblock \emph{J. Stat. Phys.} \textbf{164}, 952--968.

\bibitem{KMO-ZRP1}
\textsc{Kuniba, A., Maruyama, S. and Okado, M.},  (2016) Multispecies totally
  asymmetric zero range process: I. Multiline process and combinatorial $R$.
\newblock \emph{J. Integrable Syst.} \textbf{1}.

\bibitem{KMO-ZRP2}
\textsc{Kuniba, A., Maruyama, S. and Okado, M.},  (2016) Multispecies totally
  asymmetric zero range process: II. Hat relation and tetrahedron equation.
\newblock \emph{J. Integrable Syst.} \textbf{1}.

\bibitem{Liggettbook2}
\textsc{Liggett, T.~M.},  (1999) \emph{Stochastic interacting systems: contact,
  voter and exclusion processes}, vol. 324 of \emph{Grundlehren der
  Mathematischen Wissenschaften [Fundamental Principles of Mathematical
  Sciences]}.
\newblock Springer-Verlag, Berlin.

\bibitem{MarSchdiscrete}
\textsc{Martin, J. and Schmidt, P.},  (2011) Multi-type {TASEP} in discrete
  time.
\newblock \emph{ALEA Lat. Am. J. Probab. Math. Stat.} \textbf{8}, 303--333.

\bibitem{MouGui}
\textsc{Mountford, T. and Guiol, H.},  (2005) The motion of a second class
  particle for the {T}{A}{S}{E}{P} starting from a decreasing shock profile.
\newblock \emph{Ann. Appl. Probab.} \textbf{15}.

\bibitem{PEM}
\textsc{Prolhac, S., Evans, M.~R. and Mallick, K.},  (2009) The matrix product
  solution of the multispecies partially asymmetric exclusion process.
\newblock \emph{J. Phys. A} \textbf{42}, 165004.

\bibitem{TraWidmultiASEP}
\textsc{Tracy, C.~A. and Widom, H.},  (2013) On the asymmetric simple exclusion
  process with multiple species.
\newblock \emph{J. Stat. Phys.} \textbf{150}, 457--470.

\end{thebibliography}

\end{document}